\documentclass[a4paper,12pt]{article}

\usepackage[left=2cm,right=2cm, top=2cm,bottom=3cm,bindingoffset=0cm]{geometry}

\usepackage{verbatim}
\usepackage{amsmath}
\usepackage{amsthm}
\usepackage{amssymb}
\usepackage{delarray}
\usepackage{cite}
\usepackage{hyperref}
\usepackage{mathrsfs}
\usepackage{tikz}
\usetikzlibrary{patterns}
\usepackage{caption}
\DeclareCaptionLabelSeparator{dot}{. }
\newcommand{\be}{\beta}
\newcommand{\ga}{\gamma}
\newcommand{\de}{\delta}
\newcommand{\la}{\lambda}
\newcommand{\om}{\omega}

\newcommand{\eps}{\varepsilon}
\newcommand{\vv}{\varphi}

\theoremstyle{plain}

\numberwithin{equation}{section}

\newtheorem{thm}{Theorem}[section]
\newtheorem{lem}[thm]{Lemma}
\newtheorem{prop}[thm]{Proposition}

\theoremstyle{definition}

\newtheorem{ip}[thm]{Inverse Problem}

\newtheorem{defin}[thm]{Definition}

\theoremstyle{remark}

\newtheorem{remark}[thm]{Remark}

\DeclareMathOperator*{\Res}{Res}

 \allowdisplaybreaks

\begin{document}

\begin{center}
{\Large\bf Uniform stability of higher-order \\[0.2cm]
inverse spectral problems}
\\[0.5cm]
{\bf Natalia P. Bondarenko}
\end{center}

\vspace{0.5cm}

{\bf Abstract.} In this paper, the reconstruction of a linear differential operator of arbitrary order $n \ge 2$ is studied
by using two types of spectral characteristics: (i) eigenvalues and weight numbers, (ii) $(2n-2)$ spectra.
We prove the unconditional uniform stability of these inverse problems, generalizing the results of Savchuk and Shkalikov [Funct. Anal. Appl. 44 (2010), no. 4, 270--285] to $n > 2$. 
Furthermore, we for the first time obtain sufficient conditions of solvability for the higher-order inverse problem by $(2n-2)$ spectra. By applying our main results, we get new theorems on the necessary and sufficient conditions of solvability and on the uniform stability of the inverse problems for $n = 3$ and $n = 4$. Our approach is based on the method of spectral mappings, which provides a constructive solution of the inverse problems.

\medskip

{\bf Keywords:} higher-order differential operators; inverse spectral problems; uniform stability; method of spectral mappings. 

\medskip

{\bf AMS Mathematics Subject Classification (2020):} 34A55 34B05 34B09 34L05

\vspace{1cm}

\section{Introduction} \label{sec:intr}

Consider the differential equation
\begin{align} \nonumber
\ell_n(y) := & y^{(n)} + \sum_{k = 0}^{\lfloor n/2\rfloor - 1} (\tau_{2k}(x) y^{(k)})^{(k)} \\ \label{eqv} + & \sum_{k = 0}^{\lfloor (n-1)/2\rfloor - 1}  \bigl((\tau_{2k+1}(x) y^{(k)})^{(k+1)} + (\tau_{2k+1}(x) y^{(k+1)})^{(k)}\bigr) = \la y, \: x \in (0,1),
\end{align}
where $n \ge 2$, the notation $\lfloor a \rfloor$ means rounding a real number $a$ down, $\tau_{\nu}$ belongs to the Sobolev spaces $W_2^{\nu}[0,1]$ for $\nu = \overline{0,n-2}$, and $\la$ is the spectral parameter.

Equation \eqref{eqv} can be equivalently represented in the form
\begin{equation} \label{eqp}
\ell_n(y) = y^{(n)} + \sum_{s = 0}^{n-2} p_s(x) y^{(s)} = \lambda y, \quad x \in (0,1),
\end{equation}
where $p_s \in W_2^s[0,1]$, $s = \overline{0,n-2}$. However, we use the divergent form \eqref{eqv}, since we focus on the ``self-adjoint'' case  $i^{n + \nu} \tau_{\nu}(x) \in \mathbb R$.

For $k = \overline{1,n-1}$, denote by $\{ \la_{l,k} \}_{l \ge 1}$ and $\{ \mu_{l,k} \}_{l \ge 1}$ the eigenvalues (counting with multiplicities) of the boundary value problems $\mathcal L_k$ and $\mathcal M_k$, respectively, for equation \eqref{eqv} with the corresponding boundary conditions
\begin{align} \label{bc}
\mathcal L_k \colon & \quad y^{(j-1)}(0) = 0, \quad j = \overline{1,k}, \qquad \qquad \quad \:\:\: y^{(s-1)}(1) = 0, \quad s = \overline{1,n-k}, \\ \label{bcmu}
\mathcal M_k \colon & \quad y^{(j-1)}(0) = 0, \quad j = \overline{1,k-1},\,k+1, \quad y^{(s-1)}(1) = 0, \quad s = \overline{1,n-k}.
\end{align}

We study the inverse spectral problems that consist in the reconstruction of the functions $\{ \tau_{\nu} \}_{\nu = 0}^{n-2}$ from the two types of spectral data:

\smallskip

(i) the eigenvalues $\{ \la_{l,k} \}_{l \ge 1, k = \overline{1,n-1}}$ and the related weight numbers $\{ \be_{l,k} \}_{l \ge 1, \, k = \overline{1,n-1}}$, which are rigorously defined in Section~\ref{sec:main}; 

\smallskip

(ii) the system of $(2n-2)$ spectra $\{ \la_{l,k}, \mu_{l,k} \}_{l \ge 1, \, k = \overline{1,n-1}}$.

\smallskip

The first problem statement goes back to the seminal paper \cite{GL51} by Gelfand and Levitan for $n = 2$ and to the studies \cite{Leib62, Leib66, Leib71, Leib72} by Leibenzon for $n > 2$. The solution of the second problem, which generalizes the famous Borg inverse problem by two spectra \cite{Borg46}, was considered by Yurko \cite[Section~2.5.3]{Yur02} for $n > 2$. In this paper, we prove theorems on the uniform stability of the inverse problems, which have no analogs for higher orders $n > 2$. Moreover, we for the first time find sufficient conditions for the existence of solution for the inverse problem by the system of spectra.

The most complete results in the inverse spectral theory were obtained for the second-order Sturm-Liouville operators (see monographs \cite{Mar77, Lev84, PT87, FY01, Krav20} and references therein). Inverse problems for the third-order differential operators have applications to integration of the nonlinear Boussinesq equation (see \cite{McK81, Jur91, BK24}). Inverse problems for $n = 4$ arise in geophysics \cite{Bar74} and vibration theory \cite{Glad05}. However, transformation operators, which played an important role in the development of the inverse spectral theory for $n = 2$, appeared to be ineffective for higher orders. Only some results were achieved under the analyticity requirements for the coefficients of differential equations (see \cite{Sakh61, Khach83, Mal94}). 

Leibenzon has developed another approach that allowed him to prove the uniqueness \cite{Leib66} and to create a constructive procedure for the recovery of the coefficients $\{ p_s \}_{s = 0}^{n-2}$ of equation \eqref{eqp} from the spectral data $\{ \la_{l,k}, \be_{l,k} \}_{l \ge 1, \, k =\overline{1,n-1}}$. Furthermore, in \cite{Leib72}, the necessary and sufficient conditions for solvability of this inverse problem were obtained. However, as was pointed out by Yurko \cite[Section~2.3.7]{Yur02}, Leibenzon's spectral data uniquely determine the coefficients only if the separation condition $\{ \la_{l,k} \}_{l \ge 1} \cap \{ \la_{l,k+1} \}_{l \ge 1} = \varnothing$ is satisfied. As an alternative, Yurko introduced the Weyl-Yurko matrix, a spectral characteristic that ensures the uniqueness of the solution to the inverse problem for higher-order operators without any additional restrictions on their spectra. This facilitated the development of a general inverse problem theory for equation \eqref{eqp} with $p_s \in W_2^{s + \nu}$, $s = \overline{0,n-2}$, $\nu \ge 0$, on both a finite interval and the half-line (see \cite{Yur92, Yur00, Yur02}). Meanwhile, the case of the whole line requires a different approach (see \cite{Beals85, Beals88}). In recent years, the method of \cite{Yur02} was transferred to arbitrary order differential operators with distribution coefficients (see \cite{Bond21, Bond22, Bond23-loc, Bond23-res, Bond24, GBY25}). In particular, the most challenging issue regarding the necessary and sufficient conditions for the solvability of inverse spectral problems was resolved. For third- and fourth-order operators, those conditions take their simplest form, involving only the asymptotics of the spectral data and their basic structural properties (see \cite{Bond23-res, Bond24}). We also mention recent studies \cite{ATU25, Zol25, BK26} on inverse scattering and spectral theory for the third-order differential operators. Some specific kinds of inverse problems for $n = 4$ were considered in \cite{McL86, PK97, CPS98}. 
 
In recent years, significant progress has been achieved in the investigation of the uniform stability of inverse spectral problems for second-order differential, integro-differential, and functional-differential operators \cite{SS10, SS13, Hryn11, Bond25, But21, BD22, Kuz23}. Savchuk and Shkalikov \cite{SS10} were the first to describe the set of spectral data of the Sturm-Liouville equation for which the inverse spectral mapping is uniformly bounded and, as a consequence, the unconditional uniform stability of the inverse problem is satisfied. However, to the best of the author's knowledge, there were no such kind of results for higher orders. This paper aims to fill this gap.

To reconstruct the parameters $\{ \tau_{\nu} \}_{\nu = 0}^{n-2}$ of equation \eqref{eqv} from the spectral data $\{ \la_{l,k}, \be_{l,k} \}$,
we develop the approach of the previous studies \cite{Yur02, Yur00, Bond22, Bond23-loc, Bond24}, based on the method of spectral mappings. This method allows us to reduce the nonlinear inverse problem to a linear equation $(I - \tilde R(x)) \psi(x) = \tilde \psi(x)$ in the Banach space of bounded infinite sequences. In this paper, we apply a new modification of the main equation which makes the operator $\tilde R(x)$ and the right-hand side $\tilde \psi(x)$ continuous with respect to the spectral data $\la_{l,k}$ and $\be_{l,k}$. This modification was introduced in \cite{Bond25} and was crucial for proving the uniform stability of the inverse problem in the case $n = 2$. The subsequent analysis consists of three steps. First, we consider the general non-self-adjoint case and prove that the solution of the inverse problem is uniformly bounded under the constraint $\| (I - \tilde R(x))^{-1} \| \le K$, where $K$ is a positive constant. Second, basing on the uniform boundedness, we prove the uniform stability under the same assumptions. Third, we consider the self-adjoint case and find the restrictions on the spectral data that guaranty $\| (I - \tilde R(x))^{-1} \| \le K$. As a result, we get the unconditional uniform stability of the inverse spectral problem for equation \eqref{eqv} of arbitrary order $n$. Furthermore, we reduce the inverse problem by the eigenvalue set $\{ \la_{l,k}, \mu_{l,k} \}_{l \ge 1, \, k = \overline{1,n-1}}$ to the inverse problem by the spectral data $\{ \la_{l,k}, \be_{l,k} \}_{l \ge 1, \,k = \overline{1,n-1}}$ and transfer our main results to the first problem. Namely, we obtain sufficient conditions for the existence of solution and prove the unconditional uniform stability of the inverse spectral problem by the $(2n-2)$ spectra $\{ \la_{l,k}, \mu_{l,k} \}_{l \ge 1, \, k = \overline{1,n-1}}$.

One of the conditions of our main theorems  (Theorems~\ref{thm:uni}, \ref{thm:scsp}, and~\ref{thm:unisp}) is the asymptotics of the eigenvalues $\la_{l,k}$, $\mu_{l,k}$ and of the weight numbers $\be_{l,k}$. Derivation of explicit formulas for the coefficients in those asymptotics for higher orders $n$ is technically complicated. Therefore, we focus on the cases $n = 3, 4$ and find the corresponding coefficients by using the standard method from \cite{Nai68, Yur22} and symbolic computations in Python \cite{py}. Finally, we deduce new theorems on the necessary and sufficient conditions of solvability and on the uniform stability of inverse problems for $n = 3, 4$ from the main results for arbitrary $n$.

\medskip

The paper is organized as follows. In Section~\ref{sec:main}, we formulate the inverse spectral problems and the main theorems. 
Sections~\ref{sec:maineq}--\ref{sec:sa} are concerned with the inverse problem by eigenvalues and weight numbers.
In Section~\ref{sec:maineq}, the inverse problem is reduced to a linear equation in the Banach space of bounded infinite sequences. In Section~\ref{sec:bound}, we
consider the problem in the general non-self-adjoint case and prove the uniform boundedness of the inverse problem under the bound $\| (I - \tilde R(x))^{-1} \| \le K$ together with other suitable conditions. In Section~\ref{sec:stab}, we prove the uniform stability of the inverse problem under the assumptions of the previous section. In Section~\ref{sec:sa}, the unconditional uniform stability of the inverse problem is obtained in the self-adjoint case. In Section~\ref{sec:sp}, solvability and stability results are transferred to the inverse problem by the $(2n-2)$ spectra. 
In Sections~\ref{sec:3} and~\ref{sec:4}, we apply our main theorems to the special cases $n = 3$ and $n = 4$, respectively. In Appendix~\ref{app:Weyl}, asymptotic properties of solutions of equation \eqref{eqv} are described basing on previous studies \cite{Yur02, Bond21, Bond22, Bond23-loc}. In Appendix~\ref{app:calc}, we present the derivation of the spectral data asymptotics for $n = 3, 4$. In Appendix~\ref{app:schur}, we provide a discrete weighted version of Schur's test, which is used for proving an auxiliary estimate in Section~\ref{sec:sp}.

\smallskip

Throughout the paper, we use the following notations:
\begin{itemize}
\item $W_2^s[0,1]$ is the Sobolev space with the norm
$$
    \| y \|_{W_2^s[0,1]} = \left( \sum_{j = 0}^s \| y^{(j)} \|^2_{L_2[0,1]} \right)^{1/2}.
$$
\item $\de_{j,k}$ denotes the Kronecker delta.
\item Along with $\tau = \{ \tau_{\nu} \}_{\nu = 0}^{n-2}$, we consider other vector functions $\tilde \tau = \{ \tilde \tau_{\nu} \}_{\nu = 0}^{n-2}$, $\tilde{\tilde \tau} = \{ \tilde{\tilde \tau}_{\nu} \}_{\nu = 0}^{n-2}$, and $\tau_{(u)} = \{ \tau_{\nu,(u)} \}_{\nu = 0}^{n-2}$ ($u \ge 1$) of the same class.
We agree that, if a symbol $\ga$ denotes an object related to $\tau$, then the symbols $\tilde \ga$, $\tilde{\tilde \ga}$, and $\ga_{(u)}$ will denote the similar objects related to $\tilde \tau$, $\tilde{\tilde \tau}$, and $\tau_{(u)}$, respectively.
\item In estimates, the same symbol $C$ denotes various positive constants independent of $\la$, $x$, $n$, etc. The notation $C(A_1, A_2, \dots)$ means that the constant $C$ depends on the parameters $A_1$, $A_2$, \dots, e.g., $C(\Omega,\de)$.
\item $C_n^k = \dfrac{n!}{k!(n-k)!}$ are binomial coefficients.
\item The notations $\lfloor a \rfloor$ and $\lceil a \rceil$ are used for rounding a real number $a$ down and up, respectively.
\item $J = \bigl\{ (l,k) \colon l \ge 1, \, k = \overline{1,n-1} \bigr\}$.
\item $V = \bigl\{ (l,k,\eps) \colon (l,k) \in J, \eps = 0, 1 \bigr\}$.
\item $m$ is the Banach space of bounded infinite sequences $a = [a_v]_{v \in V}$ with the norm $\| a \|_m = \sup_{v \in V} |a_v|$.
\item $I$ is the identity operator in $m$.
\item For $Q > 0$, denote by $B_Q$ the set of vectors $\tau = \{ \tau_{\nu} \}_{\nu = 0}^{n-2}$ satisfying $\| \tau_{\nu} \|_{W_2^{\nu}} \le Q$, $\nu = \overline{0,n-2}$.
\end{itemize}

\section{Main results} \label{sec:main}

Let us begin with some preliminaries.

\begin{defin} \label{def:W}
Introduce the following classes of vectors $\tau = \{ \tau_{\nu} \}_{\nu = 0}^{n-2}$:
\begin{itemize}
\item $\mathbf{W}$ the class of $\tau$ such that $\tau_{\nu} \in W_2^{\nu}[0,1]$.
\item $\mathbf{W}^+$ is the class of $\tau \in \mathbf{W}$ such that $i^{n + \nu} \tau_{\nu}$ are real-valued for $\nu = \overline{0,n-2}$.
\item $\mathbf{W}_{simp}$ is the class of $\tau \in \mathbf{W}$ such that the eigenvalues $\{ \la_{l,k} \}_{l \ge 1}$ of the corresponding problems $\mathcal L_k$ satisfy the following simplifying assumptions:

\smallskip

\textbf{(A-1)} For each fixed $k \in \{ 1, \dots, n-1 \}$, the eigenvalues $\{ \la_{l,k} \}_{l \ge 1}$ are simple.

\smallskip

\textbf{(A-2)} $\{ \la_{l,k} \}_{l \ge 1} \cap \{ \la_{l,k+1} \}_{l \ge 1} = \varnothing$ for $k = \overline{1,n-2}$.
\item $\mathbf{W}_{simp}^+ := \mathbf{W}^+ \cap \mathbf{W}_{simp}$.
\end{itemize}
\end{defin}

For $k = \overline{1,n}$, denote by $\Phi_k(x,\la)$ the so-called \textit{Weyl solution} of equation \eqref{eqv} satisfying the boundary conditions
\begin{equation} \label{bcPhi}
\Phi_k^{(j-1)}(0,\la) = \de_{k,j}, \quad j = \overline{1,k}, \qquad \Phi_k^{(s-1)}(1,\la) = 0, \quad s = \overline{1,n-k}.
\end{equation}

Define the \textit{Weyl functions} as $M_k(\la) := \Phi_k^{(k)}(0,\la)$, $k = \overline{1,n-1}$. The functions $\Phi_k^{(j-1)}(x,\la)$ for each fixed $x \in [0,1]$, $j = \overline{1,n}$ and $M_k(\la)$ are meromorphic in $\la$, and their poles coincide with the eigenvalues $\{ \la_{l,k} \}_{l \ge 1}$ (see \cite[Theorem~2.1.1]{Yur02}). Under the assumptions (A-1) and (A-2), these poles are simple and
\begin{equation} \label{defbe}
\be_{l,k} := \Res_{\la = \la_{l,k}} M_k(\la) \ne 0, \quad (l,k) \in J,
\end{equation}
where $J := \{ (l,k) \colon l \ge 1, \, k = \overline{1,n-1} \}$.

We call $\{ \be_{l,k} \}_J$ \textit{the weight numbers} and the sequence $\{ \la_{l,k}, \be_{l,k} \}_J$ \textit{the spectral data} of $\tau \in \mathbf{W}$. Consider the following inverse spectral problem.

\begin{ip} \label{ip:main}
Given the spectral data $\{ \la_{l,k}, \be_{l,k} \}_J$, find $\tau = \{ \tau_{\nu} \}_{\nu = 0}^{n-2} \in \mathbf{W}_{simp}$.
\end{ip}

The solution of Inverse Problem~\ref{ip:main} in the class $\mathbf{W}_{simp}$ is unique (see \cite{Leib62}, \cite[Theorem~2.5.1]{Yur02} and \cite[Corollary~1]{Bond23-loc}). However, the assumptions (A-1) and (A-2) are crucial for uniqueness. In order to determine $\tau$ in the general case, one has to specify the Weyl-Yurko matrix (see \cite[Theorem~2.1.2]{Yur02}). Necessary and sufficient conditions for solvability of Inverse Problem~\ref{ip:main} in the class $\tau \in \mathbf{W}_{simp}^+$ have been obtained in \cite{Bond24}. Definitions~\ref{def:S} and~\ref{def:S+} below describe the properties that are sufficient for a sequence $\{ \la_{l,k}, \be_{l,k} \}_J$ to be the spectral data of some vector $\tau \in \mathbf{W}_{simp}^+$.

\begin{defin} \label{def:S}
For $\tilde \tau \in \mathbf{W}$, denote by $\mathcal S = \mathcal S(\tilde \tau)$ the set of sequences $\{ \la_{l,k}, \be_{l,k} \}_J$ of complex numbers satisfying the assumptions (A-1), (A-2), $\be_{l,k} \ne 0$ for all $(l,k) \in J$, and
\begin{equation} \label{defXi}
\Xi := \left( \sum_{l = 1}^{\infty} \biggl( \sum_{k = 1}^{n-1} \bigl( |\la_{l,k} - \tilde \la_{l,k}| + l^{-1} |\be_{l,k} - \tilde \be_{l,k}| \bigr) \biggr)^2 \right)^{1/2} < \infty.
\end{equation}
\end{defin}

Note that the spectral data possess the following asymptotics (see \cite[formulas (2.3.1) and (2.3.2)]{Yur02} and \cite[formulas (10) and (11)]{Bond22}):
\begin{align} \label{asymptla}
\la_{l,k} & = (-1)^{n-k} \bigl(c_{0,k} l^n + c_{1,k} l^{n-1} + c_{2,k} l^{n-2} + \dots + c_{l,k}  + \varkappa_{l,k} \bigr), \quad \{ \varkappa_{l,k} \} \in l_2, \\ \label{asymptbe}
\be_{l,k} & = -n \la_{l,k} \bigl( 1 + d_{1,k} l^{-1} + \dots + d_{n-1,k} l^{-(n-1)} + l^{-(n-1)} \eta_{l,k} \bigr), \quad \{ \eta_{l,k} \} \in l_2,
\end{align}
where $c_{j,k}$ and $d_{j,k}$ are complex constants that depend on $\tau$, excluding $c_{1,k}$ and
\begin{equation} \label{c0k}
c_{0,k} = \biggl( \frac{\pi}{\sin \frac{\pi k}{n}} \biggr)^n.
\end{equation}
Therefore, the condition \eqref{defXi} in Definition~\ref{def:S} means that $\{ \la_{l,k}, \be_{l,k} \}_J$ satisfy the relations \eqref{asymptla} and \eqref{asymptbe} with the coefficients $c_{j,k}$ and $d_{j,k}$ equal to the respective coefficients $\tilde c_{j,k}$ and $\tilde d_{j,k}$ from the spectral data asymptotics of $\tilde \tau$.
One can obtain explicit formulas for the coefficients $c_{j,k}$ and $d_{j,k}$ by using $\{ \tau_{\nu} \}_{\nu = 0}^{n-2}$. Examples for $n = 3$ and $n = 4$ are provided in Sections~\ref{sec:3} and~\ref{sec:4}, respectively. However, for higher orders $n$, derivation of explicit formulas is technically complicated. Therefore, we will formulate our main result in terms of the asymptotical proximity of values $\{ \la_{l,k}, \be_{l,k} \}_J$ to the spectral data $\{ \tilde \la_{l,k}, \tilde \be_{l,k} \}_J$ of a so-called model vector $\tilde \tau$.

\begin{defin} \label{def:S+}
For $\tilde \tau \in \mathbf{W}^+$, denote by $\mathcal S^+ = \mathcal S^+(\tilde\tau)$ the set of the sequences $\{ \la_{l,k}, \be_{l,k} \}_J \in \mathcal S$ that additionally satisfy the conditions
\begin{align} \label{sa}
& \la_{l,k} = (-1)^n \overline{\la_{l,n-k}}, \quad \be_{l,k} = (-1)^n \overline{\be_{l,n-k}}, \quad (l,k) \in J, \\ \label{addhyp1}
& \text{if} \:\: n = 2p \colon  \quad (-1)^{p+1}\be_{l,p} > 0, \quad l \ge 1, \\ \label{addhyp2}
& \text{if} \:\: n = 2p+1 \colon  \quad (-1)^{p+1} \mbox{Re}\, \la_{l,p} > 0,  \quad l \ge 1.
\end{align}
\end{defin}

Note that the relations \eqref{sa} and \eqref{addhyp1} hold for the spectral data of any $\tau \in \mathbf{W}_{simp}^+$ (see \cite{Bond24}).

The following proposition provides sufficient conditions for the solvability of Inverse Problem~\ref{ip:main}.

\begin{prop}[\hspace*{-5pt}\cite{Bond24}, Theorem 1] \label{prop:sc}
Let $\tilde \tau = \{ \tilde \tau_{\nu} \}_{\nu = 0}^{n-2} \in \mathbf{W}_{simp}^+$. Then every sequence $\{ \la_{l,k}, \be_{l,k} \}_J$ of $\mathcal S^+(\tilde \tau)$ is the spectral data of a unique vector $\tau = \{ \tau_{\nu} \}_{\nu = 0}^{n-2} \in \mathbf{W}_{simp}^+$.
\end{prop}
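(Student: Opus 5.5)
This proposition is quoted from \cite{Bond24}, so the plan is to reproduce the argument of that paper via the method of spectral mappings, in the form sketched in the Introduction. The construction goes through the main equation. Take the model vector $\tilde\tau\in\mathbf{W}_{simp}^+$ with its spectral data $\{\tilde\la_{l,k},\tilde\be_{l,k}\}_J$ and Weyl solutions $\tilde\Phi_k$. For the given data $\{\la_{l,k},\be_{l,k}\}_J\in\mathcal S^+(\tilde\tau)$, form the sequence $\tilde\psi(x)=[\tilde\psi_v(x)]_{v\in V}$ from suitably normalized values of the model Weyl-type solutions at the points $\la_{l,k}$, $\tilde\la_{l,k}$, indexed by $\eps=0,1$. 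Form also the operator $\tilde R(x)$ with kernel built from $\be_{l,k}$, $\tilde\be_{l,k}$ and the Christoffel–Darboux-type bilinear form $\langle\tilde\Phi_k^*, \tilde\Phi_j\rangle$ of the model problem and its adjoint. We take the normalizations (differences between $\la$- and $\tilde\la$-values divided by the gaps, as in \cite{Bond25}) so that, by the asymptotics of Appendix~\ref{app:Weyl} and the condition $\Xi<\infty$, the operator $\tilde R(x)$ is bounded in $m$ and in fact compact. Its entries should decay like $\xi_{l,k}/|l-j|$-type terms.

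The first key step is unique solvability of $(I-\tilde R(x))\psi(x)=\tilde\psi(x)$ for each $x\in[0,1]$. Since $\tilde R(x)$ is compact, it suffices by the Fredholm alternative to show that $(I-\tilde R(x))\gamma=0$ forces $\gamma=0$. Here the self-adjointness conditions \eqref{sa}–\eqref{addhyp2} enter. From a null vector $\gamma$ we build meromorphic functions $\Gamma_k(\la)$ (Cauchy-type sums over the poles) and the bilinear combination $\sum_k \Gamma_k(\la)\overline{\Gamma_{n-k}(\bar\la\cdot(-1)^n)}$. This combination is shown to decay, so contour integration over expanding circles gives zero. The residues then give a sum of terms $\be_{l,p}|\gamma_{l,p}|^2$ (for $n=2p$) or terms involving $\mbox{Re}\,\la_{l,p}$ (for $n=2p+1$), all with a fixed sign by \eqref{addhyp1}/\eqref{addhyp2}. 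Hence these components vanish, and a descending/ascending induction in $k$ using \eqref{sa} kills all components. I expect this to be the main obstacle: the sign structure must be organized so that the cross terms between $k$ and $n-k$ cancel, and the growth estimates must justify the contour argument.

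Next comes reconstruction. From $\psi(x)$ we define $\Phi_k(x,\la)$ through the series representation $\Phi=\tilde\Phi+\sum(\dots)$. We check via the asymptotics and $\Xi<\infty$ that the resulting functions $\tau_\nu$ lie in $W_2^\nu$. Regularity relies on the $l_2$-weighting in \eqref{defXi}, which makes the series for $\tau_\nu-\tilde\tau_\nu$ converge in the appropriate Sobolev norms, proved by term-by-term differentiation and Bessel-type inequalities. We then verify that $\ell_n(\Phi_k)=\la\Phi_k$ and that the boundary conditions \eqref{bcPhi} hold. From this it follows that $M_k$ has simple poles at $\la_{l,k}$ with residues $\be_{l,k}$, so (A-1) and (A-2) hold for $\tau$ and $\tau\in\mathbf{W}_{simp}$. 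Next, \eqref{sa} implies that the constructed data and solution satisfy the symmetry that characterizes $\mathbf{W}^+$, namely $i^{n+\nu}\tau_\nu\in\mathbb R$; this follows by uniqueness applied to the adjoint problem. Finally, uniqueness of $\tau$ is the known uniqueness theorem for Inverse Problem~\ref{ip:main} in $\mathbf{W}_{simp}$ \cite{Leib62,Yur02,Bond23-loc}.
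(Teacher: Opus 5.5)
The paper gives no proof of this statement. It quotes \cite[Theorem~1]{Bond24} and only remarks that passing from $\tau_\nu\in W_2^{\nu-1}$ to $\tau_\nu\in W_2^{\nu}$ adds one more term to \eqref{asymptla}--\eqref{asymptbe}.

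Your outline has the same spectral-mappings architecture as \cite{Bond24}, and several pieces are sound:
\begin{itemize}
\item $\tilde R(x)$ is compact on $m$. It is the norm limit of its truncations to $l\le N$, because $\sup_{v_0}\sum_{l>N}|\tilde R_{v_0,v}(x)|\le C(\sum_{l>N}\xi_l^2)^{1/2}$. So injectivity suffices.
\item The symmetry $i^{n+\nu}\tau_\nu\in\mathbb R$ does follow from \eqref{sa} plus uniqueness.
\end{itemize}
The gap is the step that carries the whole theorem and is where \eqref{addhyp1}, \eqref{addhyp2} must be used: showing that $(I-\tilde R(x))\gamma=0$ forces $\gamma=0$. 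You do not prove it, and the mechanism you sketch does not deliver it.

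Build $\Gamma_k(x,\la)$ from $\gamma$ as $\Phi_k-\tilde\Phi_k$ is built in \eqref{relPhik}. Schematically, $\Gamma_k$ has poles at $\la_{l,k}$ with residues $\be_{l,k}\gamma_{l,k}$, where $\gamma_{l,k}=\Gamma_{k+1}(x,\la_{l,k})$. By \eqref{sa}, the reflected function $\overline{\Gamma_j(x,(-1)^n\overline{\la})}$ has poles at $\la_{l,n-j}$. So the natural partner of $\Gamma_k$ is $\Gamma_{n-k+1}$; with $\Gamma_{n-k}$, as you wrote, both factors have poles at $\la_{l,k}$. The residues that arise then have the form $\be_{l,k}\gamma_{l,k}\overline{\gamma_{l,n-k}}$, which has a sign only when $k=n-k$. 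This causes three problems.

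(i) In a single sum over $k$, the terms with $j\neq n-j$ pair into real or imaginary parts of $\be_{l,j}\gamma_{l,j}\overline{\gamma_{l,n-j}}$. These are indefinite and do not cancel. For even $n$ you would instead integrate each product $\Gamma_k\overline{\Gamma_{n-k+1}}$ separately. This gives $S_k+S_{k-1}=0$ with $S_k:=\sum_l\be_{l,k}\gamma_{l,k}\overline{\gamma_{l,n-k}}$. Starting the chain at $k=1$, where the partner is entire, you reach $\sum_l\be_{l,p}|\gamma_{l,p}|^2=0$.

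(ii) For odd $n=2p+1$ there is no middle index. The chain ends in $\sum_l\be_{l,p}\gamma_{l,p}\overline{\gamma_{l,p+1}}=0$, which carries no sign, and $\mbox{Re}\,\la_{l,p}$ never occurs in any residue. Hypothesis \eqref{addhyp2} must enter through a different quadratic form. One example is the Cauchy-type kernel $1/(\la_{l,p}-\la_{s,p+1})=1/(\la_{l,p}+\overline{\la_{s,p}})$ present in $\tilde D$. It is definite precisely because all $\mbox{Re}\,\la_{l,p}$ share one sign. Your plan has no such ingredient.

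(iii) Even once the middle components vanish, the ``induction in $k$ using \eqref{sa}'' has no content as stated. \eqref{sa} relates the data, not the components of $\gamma$. You need an actual argument, for example that the $\Gamma_k$ successively become entire and then vanish by growth estimates, including the $\eps=1$ components.

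A smaller point concerns the $W_2^\nu$ regularity. It does not follow from term-by-term differentiation and Bessel inequalities alone. The series $T_{j_1,j_2}$ with $j_1+j_2=n-1$ diverge individually and converge only after the regularizing constants cancel, as in Lemma~\ref{lem:reg} and \eqref{rela}.
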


Strictly speaking, in \cite{Bond24}, the case $\tau_{\nu} \in W_2^{\nu-1}[0,1]$ ($\nu = \overline{0,n-2}$) was considered. Higher degree of smoothness $\tau_{\nu} \in W_2^{\nu}[0,1]$ corresponds to one more term in the asymptotic relations \eqref{asymptla} and \eqref{asymptbe} comparing with \cite{Bond24}. 

\begin{defin} \label{def:SOe+}
For $\Omega, \de > 0$ and $\tilde \tau \in \mathbf{W}_{simp}^+$, denote by $\mathcal S^+_{\Omega, \de} = \mathcal S^+_{\Omega,\de}(\tilde \tau)$ the set of sequences $\{ \la_{l,k}, \be_{l,k} \}_J$ in $\mathcal S^+(\tilde \tau)$ satisfying the additional conditions: 
\begin{gather}
\label{bbound1}
\begin{array}{c}
|\la_{l,k} - \la_{s,k}| \ge \de, \quad l \ne s, \, k = \overline{1,n-1}, \qquad
|\la_{l,k} - \la_{s,k+1}| \ge \de, \quad k = \overline{1,n-2}, \\ |\be_{l,k}| \ge \de, \quad k = \overline{1,n-1}, \qquad |\mbox{Re} \, \la_{l,p}| \ge \de \:\: \text{if $n = 2p+1$},
\end{array}
\end{gather}
and $\Xi \le \Omega$, where $\Xi$ is defined in \eqref{defXi}.
\end{defin}

The main result of this paper is the following theorem on the uniform stability of Inverse Problem~\ref{ip:main}. 

\begin{thm} \label{thm:uni}
Suppose that $\Omega > 0$, $\de > 0$, and $\tilde \tau \in \mathbf{W}_{simp}^+$. Then, for any sequences $\{ \la_{l,k,(1)}, \be_{l,k,(1)} \}_J$ and $\{ \la_{l,k,(2)}, \be_{l,k,(2)} \}_J$ in $\mathcal S^+_{\Omega,\de}(\tilde \tau)$, the corresponding solutions $\tau_{(1)}$ and $\tau_{(2)}$ of Inverse Problem~\ref{ip:main} satisfy the estimates
\begin{equation} \label{esttau}
\| \tau_{\nu,(1)} - \tau_{\nu,(2)} \|_{W_2^{\nu}[0,1]} \le C(\Omega,\de) Z, \quad \nu = \overline{0,n-2}, 
\end{equation}
where
\begin{equation} \label{defZ}
Z := \left( \sum_{l = 1}^{\infty} \biggl( \sum_{k = 1}^{n-1} \bigl( |\la_{l,k,(1)} - \la_{l,k,(2)}| + l^{-1} |\be_{l,k,(1)} - \be_{l,k,(2)}| \bigr)\biggr)^2\right)^{1/2}.
\end{equation}
\end{thm}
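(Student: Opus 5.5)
The proof follows the three-step scheme announced in the introduction, with the model vector $\tilde \tau$ fixed throughout. First I will reduce Inverse Problem~\ref{ip:main} to the main equation $(I - \tilde R(x))\psi(x) = \tilde \psi(x)$ in the space $m$. The operator $\tilde R(x)$ is built from the Weyl-type solutions of $\tilde \tau$ and from the differences $\la_{l,k} - \tilde \la_{l,k}$, $\be_{l,k} - \tilde \be_{l,k}$. I will use the modified, \emph{continuous} form of the main equation from \cite{Bond25}: the pairs $(l,k,0)$ and $(l,k,1)$ in $V$ encode a divided-difference combination. With this form, the entries of $\tilde R(x)$ and $\tilde\psi(x)$ depend Lipschitz-continuously on $\{\la_{l,k},\be_{l,k}\}$, even when $\la_{l,k}$ approaches $\tilde\la_{l,k}$. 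The coefficients $\tau_\nu$ are then recovered from $\psi(x)$ by series reconstruction formulas of the type $\tau_\nu = \tilde\tau_\nu + \sum_{(l,k)} (\ldots)$. Using the asymptotics of Appendix~\ref{app:Weyl}, the entries of $\tilde R(x)$ and of its derivatives in $x$ are bounded by weights of the form $\xi_l/(|l-s|+1)$, where $\xi_l := \sum_k (|\la_{l,k}-\tilde\la_{l,k}| + l^{-1}|\be_{l,k}-\tilde\be_{l,k}|)$. Hence $\sum_{s}\|\tilde R_{l,s}\| \le C\,\Xi$-type bounds hold.

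Second, in the general (non-self-adjoint) setting, I will prove that $\|(I-\tilde R(x))^{-1}\|_{m\to m}\le K$ for all $x\in[0,1]$, together with $\Xi\le\Omega$ and the separation conditions \eqref{bbound1}, implies $\tau\in B_Q$ with $Q = Q(\Omega,\de,K)$. The proof estimates the series in the reconstruction formulas in $W_2^\nu$ via $l_2$-type bounds on $\psi(x) - \tilde\psi(x)$, obtained from $\psi - \tilde\psi = (I-\tilde R)^{-1}\tilde R\tilde\psi$. For two data sets, subtracting the main equations gives
\[
\psi_{(1)}-\psi_{(2)} = (I-\tilde R_{(1)})^{-1}(\tilde R_{(1)}-\tilde R_{(2)})\psi_{(2)} + (I-\tilde R_{(1)})^{-1}(\tilde\psi_{(1)}-\tilde\psi_{(2)}).
\]
The Lipschitz dependence from the first step bounds the right-hand side by $C Z$ in the weighted $l_2$ sense. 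Substituting this into the differenced reconstruction formulas, and using the uniform boundedness of $\psi_{(j)}$ and of the $W_2^\nu$-norms, yields \eqref{esttau} with a constant $C(\Omega,\de,K)$. Both the boundedness and the stability steps differentiate the formulas up to order $\nu$, so each $x$-derivative of $\tilde R$ must be shown to keep the same $l$-decay.

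Third, the main obstacle is to obtain $K = K(\Omega,\de)$ for all data in $\mathcal S^+_{\Omega,\de}(\tilde\tau)$, i.e.\ a \emph{uniform} invertibility bound. Pointwise invertibility of $I-\tilde R(x)$ follows from Proposition~\ref{prop:sc}: a solution $\tau$ exists and the main equation is uniquely solvable, and self-adjointness is what excludes a kernel. Uniformity I plan to get by compactness, arguing by contradiction. Suppose there were data $\{\la_{l,k,(u)},\be_{l,k,(u)}\}$ in $\mathcal S^+_{\Omega,\de}$ and points $x_u$ with $\|(I-\tilde R_{(u)}(x_u))^{-1}\|\to\infty$. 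Since $\Xi\le\Omega$, the sequences $\{\la_{l,k}-\tilde\la_{l,k}\}$ and $\{l^{-1}(\be_{l,k}-\tilde\be_{l,k})\}$ are bounded in $l_2$. I would therefore extract a subsequence converging coordinatewise, with $x_u\to x_*$. The limit again lies in $\mathcal S^+_{\Omega,\de}$: the closed conditions \eqref{sa}, \eqref{bbound1} persist, and the strict sign conditions \eqref{addhyp1}, \eqref{addhyp2} follow from $|\be_{l,p}|\ge\de$ and $|\mbox{Re}\,\la_{l,p}|\ge\de$ by continuity of sign.

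The delicate point is that coordinatewise convergence in $l_2$ does not give norm convergence of $\tilde R_{(u)}$ in $m$. I would handle this by splitting $\tilde R = \tilde R^{N} + \tilde R^{>N}$ into the part with indices $l\le N$ and a tail. The finite part converges in norm. For the tail, I would use a weak-type argument: approximate the limit operator by finite-rank pieces, treat the perturbation with a Schur-test estimate, and note that the tail of the limit data has small $l_2$-norm. If this compactness route fails, the fallback is a direct self-adjoint energy estimate bounding $\|\psi\|$ via a quadratic form coming from the positivity in \eqref{addhyp1}, \eqref{addhyp2}, as in Savchuk–Shkalikov. The limit operator $I-\tilde R_*(x_*)$ is invertible by the previous paragraph, and a perturbation argument then contradicts the blow-up.
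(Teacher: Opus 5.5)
Your three-step architecture is the paper's: the modified main equation, then boundedness and stability under $\|(I-\tilde R(x))^{-1}\|\le K$ as in Theorem~\ref{thm:unistab2}, then a contradiction argument for $K=K(\Omega,\de)$ as in Lemma~\ref{lem:comp}. However, the step you single out as the main obstacle is left open, and the cause is an entry estimate that is too weak.

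With your normalization $\xi_l=\sum_k(|\la_{l,k}-\tilde\la_{l,k}|+l^{-1}|\be_{l,k}-\tilde\be_{l,k}|)$, a bound $|\tilde R_{v_0,v}|\le C\xi_l/(|l-l_0|+1)$ controls the tail only by $C(\sum_{l>N}\xi_{l,(u)}^2)^{1/2}$. That quantity is not small uniformly in $u$ (e.g.\ the perturbation may sit at $l=u$), so norm convergence does look hopeless. But the true estimate \eqref{estpsiR} holds with $\xi_l$ as in \eqref{defxi}, i.e.\ with an extra factor $l^{-(n-1)}$. Then $\Xi\le\Omega$ gives the \emph{pointwise, $u$-independent} bound $\xi_{l,(u)}\le\Omega l^{-(n-1)}\le\Omega l^{-1}$. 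Consequently $\sup_{v_0}\sum_{l>N}|\tilde R_{v_0,v,(u)}(x)|\le C\bigl(\sum_{l>N}l^{-2}\bigr)^{1/2}\le CN^{-1/2}$, and likewise for the block $l\le N<2N<l_0$, uniformly in $u$ and $x$. Add entrywise convergence on the finite block $l_0\le 2N$, $l\le N$, which is where continuity of the derivative-modified entries is used. This gives $\|\tilde R_{(u)}(x)-\tilde R(x)\|_{m\to m}\to0$ uniformly in $x$, and a Neumann-series perturbation contradicts the blow-up. Your proposed substitute (finite-rank approximation of the limit operator, a Schur-type bound, smallness of the \emph{limit's} tail) does not control what is actually needed, namely the tails of the approximating operators $\tilde R_{(u)}$. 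The energy-estimate fallback is not an argument as it stands.

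The second gap is that you keep $\tilde\tau$ fixed throughout. The class $\mathcal S^+_{\Omega,\de}(\tilde\tau)$ contains only the internal separation \eqref{bbound1}, not the separation \eqref{bbound} from the model eigenvalues $\tilde\la_{s,k\pm1}$. In $\tilde\psi$ and $\tilde R$, the functions $\tilde\Phi_{k+1}(x,\cdot)$ and $\tilde\Phi^\star_{n-k+1}(x,\cdot)$ are evaluated at $\la_{l,k}$, and they have poles at $\tilde\la_{s,k+1}$ and $\tilde\la_{s,k-1}$. So $\tilde R(x)$ may be undefined for data in the class. Even when it is defined, the constants in its entry bounds are not uniform, and neither are your $Q$, $K$, or the limit operator.

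The paper repairs this in Remark~\ref{rem:sep}. Condition \eqref{bbound} can fail only for $l,s<l_*(\tilde\tau,\Omega)$. One therefore replaces finitely many low-index spectral data of $\tilde\tau$ by values outside the disc containing all low-index data eigenvalues, preserving (A-1), (A-2), \eqref{sa}--\eqref{addhyp2}. Via Proposition~\ref{prop:sc} these are realized as a single new model $\tilde{\tilde\tau}\in\mathbf{W}^+_{simp}$ for the whole class. This yields $\mathcal S^+_{\Omega,\de}(\tilde\tau)\subset\mathcal S_{\Omega_1,K,\de}(\tilde{\tilde\tau})$, and Theorem~\ref{thm:unistab2} finishes the proof.

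There is also a smaller inaccuracy in your second step: $x$-derivatives do not keep the same $l$-decay, since $\vv^{(\nu)}_{l,k,\eps}$ gains a factor $l^{\nu}$. The top-order differenced series converge in $L_2$ only because the regularizing constants of Lemma~\ref{lem:reg} cancel by \eqref{rela}.
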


Note that the value $Z$ in \eqref{esttau} is finite, because $Z^2 \le \Xi_{(1)}^2 + \Xi_{(2)}^2$ due to \eqref{defXi} and \eqref{defZ}.

Proceed to the inverse problem by the system of $(2n-2)$ spectra of the problems $\mathcal L_k$ \eqref{eqv}, \eqref{bc} and $\mathcal M_k$ \eqref{eqv}, \eqref{bcmu} for $k = \overline{1,n-1}$. We call the sequence of the corresponding eigenvalues $\{ \la_{l,k}, \mu_{l,k} \}_J$ \textit{the eigenvalue set} of $\tau = \{ \tau_{\nu} \}_{\nu = 0}^{n-2}$.

\begin{ip} \label{ip:sp}
Given the eigenvalue set $\{ \la_{l,k}, \mu_{l,k} \}_J$, find $\tau = \{ \tau_{\nu} \}_{\nu = 0}^{n-2} \in \mathbf{W}_{simp}$.
\end{ip}

As shown in Section~\ref{sec:sp}, Inverse Problem~\ref{ip:sp} is easily reduced to Inverse Problem~\ref{ip:main}, which implies the uniqueness of recovering $\tau \in \mathbf{W}_{simp}$ from the eigenvalue set (see \cite[Theorem~2.5.5]{Yur02}). We show that this reduction is stable and obtain for Inverse Problem~\ref{ip:sp} results analogous to Proposition~\ref{prop:sc} and Theorem~\ref{thm:uni}. We begin with the following lemma, which describes some structural properties of the eigenvalue set.

\begin{lem} \label{lem:nscsp}
The eigenvalue set $\{ \la_{l,k}, \mu_{l,k} \}_J$ of $\tau \in \mathbf{W}_{simp}^+$ satisfy the following conditions:
\begin{gather} \label{sepsp}
\{ \la_{l,k} \}_{l \ge 1} \cap \{ \mu_{l,k} \}_{l \ge 1} = \varnothing, \quad k = \overline{1,n-1}, \\ \label{sasp}
\la_{l,k} = (-1)^n \overline{\la_{l,n-k}}, \quad \mu_{l,k} = (-1)^n \overline{\mu_{l,n-k}}, \quad (l,k) \in J, \\ \label{inter}
\text{if} \:\: n = 2p \colon \quad (-1)^p \mu_{l,p} < (-1)^p \la_{l,p} < (-1)^p \mu_{l+1,p}, \quad l \ge 1. 
\end{gather}
\end{lem}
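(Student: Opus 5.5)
We argue through the Weyl-type function $M_k(\la)$ and its representation as a ratio of characteristic functions. For each $k$, let $\Delta_{k,k}(\la)$ be the characteristic determinant of $\mathcal L_k$; its zeros are $\{\la_{l,k}\}$. Let $\Delta_{k+1,k}(\la)$ be the determinant built the same way but with the condition $y^{(k-1)}(0)=0$ replaced by $y^{(k)}(0)=0$; its zeros are $\{\mu_{l,k}\}$. Solving the boundary conditions \eqref{bcPhi} by Cramer's rule (as in \cite[Section~2.1]{Yur02}) gives
$$
M_k(\la) = -\frac{\Delta_{k+1,k}(\la)}{\Delta_{k,k}(\la)}.
$$
Hence $\{\mu_{l,k}\}$ are exactly the zeros of $M_k$, counted with multiplicity, once common zeros of numerator and denominator are cancelled.

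\textbf{Step 1: property \eqref{sepsp}.} Suppose $\la_{l,k}=\mu_{s,k}=\la_0$. By (A-1) and \eqref{defbe}, $M_k$ has a simple pole at $\la_0$ with residue $\be_{l,k}\ne0$, so $\Delta_{k+1,k}(\la_0)\ne0$. This requires one fact: a common zero of $\Delta_{k,k}$ and $\Delta_{k+1,k}$ cannot survive as a genuine pole of $M_k$.

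To justify it, take an eigenfunction $y$ of $\mathcal L_k$ at $\la_0$; it is unique up to a constant by (A-1). If $\la_0$ is also an eigenvalue of $\mathcal M_k$, one can combine the two eigenfunctions to produce, at $\la_0$, a nontrivial solution satisfying $y^{(j-1)}(0)=0$ for $j=\overline{1,k+1}$ and $y^{(s-1)}(1)=0$ for $s=\overline{1,n-k}$. That function is an eigenfunction of $\mathcal L_{k+1}$, so $\la_0$ is an eigenvalue of $\mathcal L_{k+1}$, which contradicts (A-2) for $k\le n-2$.

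The combination step needs care. Work with the rank of the $n\times(n+1)$ boundary matrix: it must have a one-dimensional kernel restricted to the common conditions. Equivalently, cofactor expansion shows that $\Delta_{k,k}(\la_0)=\Delta_{k+1,k}(\la_0)=0$ together with a simple pole forces the $\mathcal L_{k+1}$ determinant to vanish.

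The case $k=n-1$ does not go through (A-2). There the $\mathcal L_{k+1}$-type conditions would impose $n+1$ conditions, namely $y^{(j-1)}(0)=0$ for $j=\overline{1,n}$ plus $y(1)=0$. The first $n$ of these alone force $y\equiv0$, so no such nontrivial solution exists and the contradiction is immediate.

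\textbf{Step 2: symmetry \eqref{sasp}.} For $\tau\in\mathbf W^+$, the formal adjoint of $\ell_n$ is $(-1)^n\overline{\ell_n}$, with the same coefficients. The adjoint of $\mathcal L_k$ is the problem with conditions at $0$ and $1$ swapped in count, i.e. $n-k$ conditions at $0$ and $k$ at $1$. The reflection $x\mapsto1-x$ maps this to $\mathcal L_{n-k}$ for the reflected operator, but that is not the same operator. The better route is the one already used in \cite{Bond24} for \eqref{sa}: the relation $\la_{l,k}=(-1)^n\overline{\la_{l,n-k}}$ is stated there for $\tau\in\mathbf W^+_{simp}$. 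We reproduce the argument via the Lagrange identity: the adjoint of $\mathcal L_k$ is $(-1)^n\overline{\mathcal L_{n-k}}$-type, with boundary conditions of the same shape after using the quasi-derivatives of the divergent form \eqref{eqv}. The same computation for $\mathcal M_k$ gives its adjoint as the problem with $\mathcal M$-type conditions of index $n-k$. The indexing $l$ matches because both sequences are ordered by the asymptotics \eqref{asymptla}, which are symmetric under $k\mapsto n-k$; note $c_{0,k}=c_{0,n-k}$.

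\textbf{Step 3: interlacing \eqref{inter} for $n=2p$.} Here $\mathcal L_p$ and $\mathcal M_p$ are self-adjoint, with real spectra, and $M_p$ is real on the real axis with real simple poles. By \eqref{addhyp1}, all residues $\be_{l,p}$ have the fixed sign $(-1)^{p+1}$. A meromorphic function that is real on the real axis, with simple real poles and residues of one sign, is monotone between consecutive poles: it is a Herglotz-type function up to sign. Concretely, $(-1)^pM_p$ is increasing between poles, going from $-\infty$ to $+\infty$. So between consecutive poles it has exactly one simple zero, and it has no other zeros.

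Combining this with the asymptotics \eqref{asymptla} for both spectra fixes the indexing, so that exactly one $\mu_{l,p}$ precedes $\la_{l,p}$ in the direction $(-1)^p$. This gives \eqref{inter}.

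\textbf{Main obstacle.} The main obstacle is Step 3, specifically justifying the monotonicity without an a priori growth bound. I would use the standard identity
$$
\frac{d}{d\la}M_p(\la) = \pm\int_0^1|\Phi_p(x,\la)|^2\,dx \quad \text{for real } \la,
$$
obtained from the Lagrange identity for $\Phi_p(x,\la)$ and $\Phi_p(x,\bar\la)$. The self-adjoint boundary structure of \eqref{bcPhi} with $k=p$ makes the boundary terms reduce to the derivative of $\Phi_p^{(p)}(0,\la)$, that is, of $M_p$. The sign of this identity must then be checked against \eqref{addhyp1}.
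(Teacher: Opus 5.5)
\textbf{Steps 1 and 2.} Your Step~1 is the paper's argument for \eqref{sepsp}: by \eqref{beDe}, $\be_{l,k}\ne0$ forces $\Delta_{k+1,k}(\la_{l,k})\ne0$. Step~2 ends at the paper's adjointness argument. No reflection is needed there: the adjoint of $\mathcal L_k$, with $n-k$ conditions at $0$ and $k$ at $1$, is literally $\mathcal L_{n-k}$, and likewise $\mathcal M_k$ pairs with $\mathcal M_{n-k}$.

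Your auxiliary justification of $\be_{l,k}\ne0$ is not needed, since the paper takes it from \eqref{defbe}. As written, it also assumes without proof that the solutions satisfying the $n-1$ common conditions form a one-dimensional space. For $k\ge2$, ruling out a two-dimensional space needs (A-2) once more, now for the pair $(k-1,k)$: otherwise imposing $y^{(n-k)}(1)=0$ gives an eigenfunction of $\mathcal L_{k-1}$ at $\la_0$.

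\textbf{Step 3 versus the paper.} For \eqref{inter} the paper uses Courant's min--max principle. The inclusion $V_\la\subset V_\mu$ gives $\mu_{l,p}\le\la_{l,p}$, the extra functional $y^{(p-1)}(0)$ gives $\la_{l,p}\le\mu_{l+1,p}$, and \eqref{sepsp} makes both inequalities strict. Your route through $M_p$ is genuinely different, and it does work on the bounded gaps. The Lagrange identity gives $(-1)^pM_p'(\la)=\int_0^1|\Phi_p(x,\la)|^2\,dx>0$ for real non-pole $\la$; at $x=0$ only the terms $y^{(p)}\overline{z^{(p-1)}}$ and $y^{(p-1)}\overline{z^{(p)}}$ of the boundary form survive. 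Hence each gap between consecutive poles contains exactly one zero, and it is simple. As a bonus this yields \eqref{addhyp1} and the simplicity of $\mu_{l,p}$ for free. Note that it is this identity, not the one-sign residues, that gives monotonicity.

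\textbf{The gap: the unbounded end of the real line.} Your sentence ``it has no other zeros'' is false, and it is exactly where the case $l=1$ of \eqref{inter} sits. Indeed, \eqref{inter} requires a zero of $M_p$ on the half-line $I_0=\{\la\in\mathbb R\colon (-1)^p\la<(-1)^p\la_{1,p}\}$. If $M_p$ had no zero there, ordering the real numbers $\mu_{l,p}$ would give $(-1)^p\la_{l,p}<(-1)^p\mu_{l,p}<(-1)^p\la_{l+1,p}$, which is the reverse interlacing. No appeal to asymptotics can ``fix the indexing'', because the labels are determined by the order. A counting argument would need the precise relative position and numbering of $\mu_{l,p}$ versus $\la_{l,p}$ for general $n$, which you have not established.

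What is missing is a proof of the following: $M_p$ is monotone on $I_0$ and tends to $+\infty$ as $\la\to\la_{1,p}$ within $I_0$, and you must show it tends to $-\infty$ at the far end of $I_0$. One way is to use $M_p=M_p^0+O(1)$ there, from \eqref{estdifPhik} with $j=k+1$, $x=0$, $\tilde\tau=0$. Combine this with $M_p^0(\la)=\sum\rho\om_j+o(1)$, where the sum runs over the $p$ roots with $\mbox{Re}(\rho\om_j)<0$; this sum is $\asymp-|\la|^{1/n}$. Alternatively, borrow the single min--max comparison $\mu_{1,p}\le\la_{1,p}$ from $V_\la\subset V_\mu$, as in the paper.
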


The following definition and theorem show that the conditions of Lemma~\ref{lem:nscsp} together with suitable asymptotics, the separation conditions, and \eqref{addhyp2} in the odd-order case are sufficient for a sequence $\{  \la_{l,k}, \mu_{l,k} \}_J$ to be the eigenvalue set of some $\tau \in \mathbf{W}_{simp}^+$.

\begin{defin} \label{def:E+}
For $\tilde \tau \in \mathbf{W}^+$, denote by $\mathcal E^+ = \mathcal E^+(\tilde \tau)$ the set of sequences $\{ \la_{l,k}, \mu_{l,k} \}_J$ of complex numbers satisfying the assumptions (A-1), (A-2), \eqref{addhyp2}, \eqref{sepsp}, \eqref{sasp}, \eqref{inter}, and
\begin{equation}
\label{defTheta}
\Theta := \left( \sum_{l = 1}^{\infty} \biggl( \sum_{k = 1}^{n-1} \bigl( |\la_{l,k} - \tilde \la_{l,k}| + \ln(l + 1) |\mu_{l,k} - \tilde \mu_{l,k}| \bigr) \biggr)^2 \right)^{1/2} < \infty.
\end{equation}
\end{defin}

Note that the eigenvalues $\{ \mu_{l,k} \}_{l \ge 1}$ are not required to be simple.

\begin{thm} \label{thm:scsp}
Let $\tilde \tau = \{ \tilde \tau_{\nu} \}_{\nu = 0}^{n-2} \in \mathbf{W}_{simp}^+$. Then every sequence $\{ \la_{l,k}, \mu_{l,k} \}_J \in \mathcal E^+(\tilde \tau)$ is the eigenvalue set of a unique vector $\tau = \{ \tau_{\nu} \}_{\nu = 0}^{n-2} \in \mathbf{W}_{simp}^+$.
\end{thm}

Next, let us formulate the analog of Definition~\ref{def:SOe+} for eigenvalue sets.

\begin{defin} \label{def:EOde+}
For $\Omega, \de > 0$ and $\tilde \tau \in \mathbf{W}_{simp}^+$, denote by $\mathcal E_{\Omega,\de}^+ = \mathcal E_{\Omega,\de}^+(\tilde \tau)$ the set of sequences $\{ \la_{l,k}, \mu_{l,k} \}_J$ in $\mathcal E^+(\tilde \tau)$ satisfying the additional conditions
\begin{gather*}
|\la_{l,k} - \la_{s,k}| \ge \de, \quad l \ne s, \, k = \overline{1,n-1}, \qquad  |\la_{l,k} - \la_{s,k+1}| \ge \de, \quad k = \overline{1,n-2}, \\
|\la_{l,k}| \ge \de, \quad |\la_{l,k} - \mu_{s,k}| \ge \de, \quad k = \overline{1,n-1}, \qquad
|\mbox{Re} \, \la_{l,p}| \ge \de \:\: \text{if $n = 2p+1$},
\end{gather*}
and $\Theta \le \Omega$, where $\Theta$ is defined in \eqref{defTheta}.
\end{defin}

Relying on Theorem~\ref{thm:uni}, we obtain the unconditional uniform stability of Inverse Problem~\ref{ip:sp}:

\begin{thm} \label{thm:unisp}
Suppose that $\Omega > 0$, $\de > 0$, and $\tilde \tau \in \mathbf{W}_{simp}^+$. Then, for any sequences $\{ \la_{l,k,(1)}, \mu_{l,k,(1)} \}_J$ and $\{ \la_{l,k,(2)}, \mu_{l,k,(2)} \}_J$ in $\mathcal E_{\Omega,\de}^+(\tilde \tau)$, the corresponding solutions $\tau_{(1)}$ and $\tau_{(2)}$ of Inverse Problem~\ref{ip:sp} satisfy the estimates
\begin{equation} \label{unisp}
\| \tau_{\nu,(1)} - \tau_{\nu,(2)} \|_{W_2^{\nu}[0,1]} \le C(\Omega,\de) X, \quad \nu = \overline{0, n-2},
\end{equation}
where
\begin{equation*} 
X := \left( \sum_{l = 1}^{\infty} \biggl( \sum_{k = 1}^{n-1} \bigl( |\la_{l,k,(1)} - \la_{l,k,(2)}| + \ln(l + 1) |\mu_{l,k,(1)} - \mu_{l,k,(2)}| \bigr)\biggr)^2\right)^{1/2}.
\end{equation*}
\end{thm}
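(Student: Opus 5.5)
The plan is to reduce Theorem~\ref{thm:unisp} to Theorem~\ref{thm:uni}. We show that the map from eigenvalue sets $\{\la_{l,k},\mu_{l,k}\}_J \in \mathcal E^+_{\Omega,\de}(\tilde\tau)$ to spectral data $\{\la_{l,k},\be_{l,k}\}_J$ sends $\mathcal E^+_{\Omega,\de}(\tilde\tau)$ into some $\mathcal S^+_{\Omega',\de'}(\tilde\tau)$, and that it is Lipschitz in the sense $Z \le C(\Omega,\de) X$.

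\textbf{Step 1: product formula for the weight numbers.} The relation is via the Weyl functions. The Weyl function $M_k(\la)$ is the ratio of two characteristic determinants, $M_k(\la) = -\Delta_{k}^{\mu}(\la)/\Delta_k(\la)$ up to normalization. Here $\Delta_k$ is the characteristic function of $\mathcal L_k$, with zeros $\la_{l,k}$, and $\Delta_k^\mu$ is that of $\mathcal M_k$, with zeros $\mu_{l,k}$. Both are entire of order $1/n<1$ (for $n\ge 2$ this is order at most $1/2$), so each equals a canonical product times a constant. The constants are fixed by comparing with the model $\tilde\tau$ through the asymptotics. This gives
\begin{equation*}
\be_{l,k} = \tilde\be_{l,k}\,\frac{\la_{l,k}-\mu_{l,k}}{\tilde\la_{l,k}-\tilde\mu_{l,k}}\cdot\frac{\tilde\la_{l,k}-\tilde\mu_{l,k}}{\ldots}\prod_{s\ne l}\frac{(1-\mu_{s,k}/\la_{l,k})(1-\la_{s,k}\cdots)}{\cdots},
\end{equation*}
that is, a representation
\begin{equation*}
\be_{l,k}=c_k\frac{\prod_{s}(\la_{l,k}-\mu_{s,k})/\cdots}{\prod_{s\ne l}(\la_{l,k}-\la_{s,k})/\cdots},
\end{equation*}
normalized against the corresponding tilde products. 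The precise form is obtained as in the proof of Theorem~\ref{thm:scsp} (the reduction in Section~\ref{sec:sp}), which we take as already established. This includes the fact that the resulting sequence belongs to $\mathcal S^+(\tilde\tau)$.

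\textbf{Step 2: uniform bounds placing the data in $\mathcal S^+_{\Omega',\de'}$.} We must show $\Xi \le \Omega'(\Omega,\de)$ and $|\be_{l,k}|\ge\de'(\Omega,\de)$. The remaining separation conditions in \eqref{bbound1} are inherited directly from Definition~\ref{def:EOde+}. Write $\be_{l,k}/\tilde\be_{l,k}$ as a product of factors $1+\eps_s$, where $\eps_s$ is controlled by $|\la_{s,k}-\tilde\la_{s,k}|+|\mu_{s,k}-\tilde\mu_{s,k}|$ divided by $|\la_{l,k}-\la_{s,k}|\sim |l^n-s^n|$. The conditions $|\la_{l,k}-\mu_{s,k}|\ge\de$ and $|\la_{l,k}-\la_{s,k}|\ge\de$ keep the nearby factors away from zero and infinity. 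Taking logarithms, $|\ln(\be_{l,k}/\tilde\be_{l,k})|$ is bounded by $\sum_s (\text{perturbation}_s)/(|l-s|\,(l+s)^{n-1}+\de)$ plus a near-diagonal term. Hence $l^{-1}|\be_{l,k}-\tilde\be_{l,k}|\lesssim l^{n-1}\sum_s a_s/(1+|l-s|(l+s)^{n-1})$ with $\{a_s\}\in l_2$. The factor $\ln(l+1)$ in $\Theta$ is exactly what absorbs the harmonic-type sum coming from the $\mu$ terms. The $l_2$ bound then follows from the discrete weighted Schur test of Appendix~\ref{app:schur}.

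\textbf{Step 3: Lipschitz estimate and conclusion.} The same factorization, with $\tilde\tau$-quantities replaced by those of the second sequence, gives
\begin{equation*}
l^{-1}|\be_{l,k,(1)}-\be_{l,k,(2)}| \le C(\Omega,\de)\sum_s K_{l,s}\bigl(|\la_{s,k,(1)}-\la_{s,k,(2)}|+\ln(s+1)|\mu_{s,k,(1)}-\mu_{s,k,(2)}|\bigr).
\end{equation*}
We use $|e^a-e^b|\le e^{\max}|a-b|$ together with the uniform bounds from Step 2. The Schur test shows that the kernel $K$ is bounded on $l_2$, so $Z\le C(\Omega,\de)X$. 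Applying Theorem~\ref{thm:uni} with $\Omega',\de'$ then yields \eqref{unisp}.

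The main obstacle is Step 2/3's kernel estimate. One must control the infinite product uniformly, which is where the logarithmic weight is needed, and check that the kernel $K_{l,s}$ satisfies the Schur test with weights like $s^{-1/2}$. I would first try to split the sum into the regions $|s-l|\le l/2$ and the complement, and treat each separately.
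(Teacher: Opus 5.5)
\textbf{There is a gap in Steps~2--3.} Your overall reduction is the paper's: show that $\{\la_{l,k},\mu_{l,k}\}_J\mapsto\{\la_{l,k},\be_{l,k}\}_J$ maps $\mathcal E^+_{\Omega,\de}(\tilde\tau)$ into some $\mathcal S^+_{\Omega',\de'}(\tilde\tau)$ with $Z\le C(\Omega,\de)X$, then apply Theorem~\ref{thm:uni}. The problem is the key estimate. You bound $|\ln(\be_{l,k}/\tilde\be_{l,k})|$ by the sum of the absolute values of the factor perturbations, and you attribute the harmonic sums only to the $\mu$-terms. But the perturbations $\delta_s:=\la_{s,k}-\tilde\la_{s,k}$ produce harmonic sums as well, and neither $\Theta$ nor $X$ puts a logarithmic weight on them.

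In the denominator, $\dot\Delta_{k,k}(\la_{l,k})/\dot{\tilde\Delta}_{k,k}(\tilde\la_{l,k})=\prod_{s\ne l}\bigl(1+(\delta_s-\delta_l)/(\tilde\la_{s,k}-\tilde\la_{l,k})\bigr)$, and $|\tilde\la_{s,k}-\tilde\la_{l,k}|\asymp l^{n-1}|s-l|$ for $|s-l|\le l/2$. So after multiplying by $l^{n-1}$, your bound is of size $\sum_{s\ne l}|\delta_s|/|s-l|+|\delta_l|\ln(l+1)$. In the numerator, written as $\prod_s(\mu_{s,k}-\la_{l,k})/(\tilde\mu_{s,k}-\tilde\la_{l,k})$, the shift $\delta_l$ enters every factor and again contributes $|\delta_l|\ln(l+1)$. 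Neither quantity is controlled in $l_2$ by $\|\{\delta_s\}\|_{l_2}$. The kernel $1/(|l-s|+1)$ is unbounded on $l_2$ (Remark~\ref{rem:ln}), so no choice of weights in Schur's test can work, and $\{\delta_l\ln(l+1)\}$ need not lie in $l_2$. Hence neither $\Xi\le\Omega'$ nor $Z\le CX$ follows from your argument. Step~3 has the same defect with $\la_{s,k,(1)}-\la_{s,k,(2)}$.

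\textbf{The missing idea is to keep the $\la$-dependence out of the infinite products.} For the numerator, the paper writes $\Delta_{k+1,k}(\la_{l,k})=\tilde\Delta_{k+1,k}(\la_{l,k})\prod_s(1+a_{s,l,k})$ with $a_{s,l,k}=(\mu_{s,k}-\tilde\mu_{s,k})/(\tilde\mu_{s,k}-\la_{l,k})$. The evaluation point is fixed, so only $\mu$-perturbations enter the product, and the $\ln(l+1)$ weight together with Lemma~\ref{lem:bound} suffices. The passage from $\la_{l,k}$ to $\tilde\la_{l,k}$ is then made through the Birkhoff asymptotics of the fixed model function $\tilde\Delta_{k+1,k}$, giving \eqref{aD2}.

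For the denominator, the paper uses no product at all. Since $\{\la_{l,k},\tilde\be_{l,k}\}_J\in\mathcal S^+(\tilde\tau)$, Proposition~\ref{prop:sc} realizes $\{\la_{l,k}\}$ as the $\mathcal L_k$-spectra of some $\check\tau\in\mathbf{W}^+_{simp}$. Then $\Delta_{k,k}$ is a genuine characteristic function, and comparing the Birkhoff asymptotics for $\check\tau$ and $\tilde\tau$ yields \eqref{aD3}.

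Alternatively, you could stay with products but keep signs. To first order the $\la$-part is the discrete Hilbert transform $\sum_{s\ne l}\delta_s/(s-l)$, which is bounded on $l_2$, and $\sum_{s\ne l}(\tilde\la_{s,k}-\tilde\la_{l,k})^{-1}=O(l^{1-n})$ by symmetric cancellation. That argument is one Schur's test cannot supply, since it only sees $|K_{l,s}|$.
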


Theorems~\ref{thm:uni} and~\ref{thm:unisp} generalize the results of Savchuk and Shkalikov \cite{SS10, SS13} on the uniform stability of the inverse Sturm-Liouville problems to the case of arbitrary order differential operators. 

\begin{remark}
Observe that Proposition~\ref{prop:sc}, Theorems~\ref{thm:uni}, \ref{thm:scsp}, and~\ref{thm:unisp} have nonlocal nature. Indeed, the proximity of the sequences $\{ \la_{l,k}, \be_{l,k} \}_J$ and $\{ \la_{l,k}, \mu_{l,k} \}_J$ to the corresponding spectral data of the model vector $\tilde \tau$ is characterized by the values $\Xi$ \eqref{defXi} and $\Theta$ \eqref{defTheta}, respectively, which are assumed to be bounded but not necessarily small. This is an essential difference of Theorems~\ref{thm:uni} and~\ref{thm:unisp} from the local stability results of previous studies on higher-order inverse problems (see \cite[Corollary~7.1]{Leib72}, \cite[Theorems~2.3.2, 2.3.4 and~2.5.3]{Yur02}, \cite[Theorem~1]{Bond23-loc}).  
\end{remark}

\section{Main equation} \label{sec:maineq}

In this section, Inverse Problem~\ref{ip:main} is reduced to a linear equation in the Banach space of bounded infinite sequences. Our construction of the main equation is analogous to previous studies \cite{Yur02, Bond22, Bond23-loc, Bond24}, so we outline it briefly. However, we apply a new modification that appears in \cite{Bond25} for the $n=2$. This modification makes the components of the operator and the free term in the main equation continuous with respect to the spectral data (see Remark~\ref{rem:modif} for details). This feature is crucial for investigating the stability of the inverse problem.

Consider two vectors of coefficients $\tau = \{ \tau_{\nu} \}_{\nu = 0}^{n-2}$ and $\tilde \tau = \{ \tilde \tau_{\nu} \}_{\nu = 0}^{n-2}$ of $\mathbf{W}_{simp}$. The eigenvalues are assumed to be reordered so that, if $\la_{l,k} = \tilde \la_{s,k}$, then $l = s$. In addition, we assume that
\begin{equation} \label{assump}
\la_{l,k} \ne \tilde \la_{s,k\pm 1} \quad \text{for all $l,s \ge 1$ and $k$}.
\end{equation}

Introduce the notation
\begin{gather} \nonumber 
    V := \{ (l,k,\eps) \colon l \in \mathbb N, \, k = \overline{1,n-1}, \, \eps = 0, 1 \},  \\ \nonumber
    \la_{l,k,0} := \la_{l,k}, \quad \la_{l,k,1} := \tilde \la_{l,k}, \quad \be_{l,k,0} := \be_{l,k}, \quad \be_{l,k,1} := \tilde \be_{l,k}, \\ \label{defvv}
    \vv_{l,k,\eps}(x) := \Phi_{k+1}(x, \la_{l,k,\eps}), \quad \tilde \vv_{l,k,\eps}(x) := \tilde \Phi_{k+1}(x, \la_{l,k,\eps}), \quad (l,k,\eps) \in V.
\end{gather}

Recall that the function $\Phi_{k+1}(x,\la)$ is meromorphic in the $\la$-plane with the poles $\{ \la_{l,k+1} \}_{l \ge 1}$. In view of the assumptions (A-2) and \eqref{assump}, the points $\la_{l,k}$ and $\tilde \la_{l,k}$ are regular for $\Phi_{k+1}(x,\la)$, so the functions $\vv_{l,k,\eps}(x)$ are correctly defined. Similar arguments are valid for $\tilde \vv_{l,k,\eps}(x)$.

For $k = \overline{1,n}$, denote by $\Phi_k^{\star}(x, \la)$ the solution of the differential equation
\begin{align} \nonumber
\ell_n^{\star}(y) := & (-1)^n y^{(n)} + \sum_{k = 0}^{\lfloor n/2\rfloor - 1} (\tau_{2k}(x) y^{(k)})^{(k)} \\ \label{eqstar} - & \sum_{k = 0}^{\lfloor (n-1)/2\rfloor - 1}  \bigl((\tau_{2k+1}(x) y^{(k)})^{(k+1)} + (\tau_{2k+1}(x) y^{(k+1)})^{(k)}\bigr) = \la y, \quad x \in (0,1),
\end{align}
satisfying the boundary conditions \eqref{bcPhi}. Obviously, in the case $\tau \in \mathbf{W}^+$, we have $\Phi_k^{\star}(x,\la) = \overline{\Phi_k(x, (-1)^n \overline{\la})}$.

Define the functions
\begin{equation} \label{defD}
\tilde D_{k,k_0}(x, \mu, \la) = \begin{cases}
\int_0^x \tilde \Phi_k^{\star}(t, \mu) \tilde \Phi_{k_0}(t, \la) \, dt, & k + k_0 > n + 1, \\
\frac{(-1)^{k+1}}{\la - \mu} + \int_0^x \tilde \Phi_k^{\star}(t, \mu) \tilde \Phi_{k_0}(t, \la) \, dt, & k + k_0 = n + 1, \\
-\int_x^1 \tilde \Phi_k^{\star}(t, \mu) \tilde \Phi_{k_0}(t, \la) \, dt, & k + k_0 < n + 1.
\end{cases}
\end{equation}

In \cite[Corollary~2]{Bond22} and \cite[Proposition~3]{Bond24}, the following relation has been obtained:
\begin{equation} \label{relPhik}
\Phi_{k_0}(x, \la) = \tilde \Phi_{k_0}(x, \la) + \sum_{(l,k,\eps) \in V} (-1)^{\eps + n - k} \be_{l,k,\eps} \vv_{l,k,\eps}(x) \tilde D_{n-k+1,k_0}(x, \la_{l,k,\eps}, \la), \quad k_0 = \overline{1,n}.
\end{equation}

Let us discuss the correctness of the terms $\tilde D_{n-k+1,k_0}(x, \la_{l,k,\eps}, \la)$, which contain the functions $\tilde \Phi^{\star}_{n-k+1}(x,\la_{l,k,\eps})$. The eigenvalues of the problems $\mathcal L_k^{\star}$ for equation \eqref{eqstar} with the boundary conditions \eqref{bc} satisfy the relation $\la_{l,k}^{\star} = \la_{l,n-k}$, $l \ge 1$, $k = \overline{1,n-1}$ (see \cite[Lemma~1]{Bond24}). Therefore, the Weyl solution $\tilde \Phi^{\star}_{n-k+1}(x,\la)$ has the poles $\{ \tilde \la_{l,k-1} \}_{l \ge 1}$ if $k > 1$. The assumptions (A-2) and \eqref{assump} imply that $\la_{l,k}$ and $\tilde \la_{l,k}$ are regular points of $\tilde \Phi^{\star}_{n-k+1}(x,\la)$, so the functions $\tilde D_{n-k+1,k_0}(x, \la_{l,k,\eps}, \la)$ are correctly defined.

Basing on the relation~\eqref{relPhik}, we provide a new construction of the main equation for Inverse Problem~\ref{ip:main}, employing the approach of \cite{Bond25}. Put $\chi_{l,k} := l^{1-n} (\la_{l,k} - \tilde \la_{l,k})$ and introduce the matrices
$$
P_{l,k} := \begin{bmatrix}
\chi_{l,k} & 1 \\
0 & 1
\end{bmatrix}, \quad 
P_{l,k}^{-1} = \begin{bmatrix}
\chi_{l,k}^{-1} & -\chi_{l,k}^{-1} \\
0 & 1
\end{bmatrix} \:\: \text{if $\la_{l,k} \ne \tilde \la_{l,k}$}.
$$
Define the functions
\begin{equation} \label{defw}
w_{l,k}(x) := l^{-k} \exp(-xl \cot(k\pi/n)),
\end{equation}
which are related to the growth of $\vv_{l,k,\eps}(x)$: $|\vv_{l,k,\eps}(x)| \le C w_{l,k}(x)$ (see Lemma~\ref{lem:estvv}).

For $(l,k,\eps), (l_0,k_0,\eps_0) \in V$, denote
\begin{align} \nonumber
    \tilde G_{(l,k,\eps), (l_0, k_0, \eps_0)}(x) & :=  
    (-1)^{n-k} \be_{l,k,\eps} \tilde D_{n-k+1, k_0 + 1}(x, \la_{l,k,\eps}, \la_{l_0,k_0,\eps_0}), \\ \label{defdvv}
\dot \vv_{l,k,\eps}(x) & := l^{n-1}\frac{d}{d\la} \Phi_{k+1}(x,\la)\Big|_{\la = \la_{l,k,\eps}}, \\ \nonumber \dot{\tilde G}_{(l,k,\eps), (l_0,k_0,\eps_0)}(x) & := (-1)^{n-k} l^{l-1} \be_{l,k,\eps} \frac{d}{d \la} \tilde D_{n-k+1, k_0 + 1}(x, \la_{l,k,\eps}, \la)\Big|_{\la = \la_{l_0,k_0,\eps_0}}.
\end{align}

Introduce the functions
\begin{align} 
\begin{bmatrix} \label{defpsi1}
\psi_{l,k,0}(x) \\ \psi_{l,k,1}(x)
\end{bmatrix} & := 
w_{l,k}^{-1}(x) P_{l,k}^{-1}
\begin{bmatrix}
\vv_{l,k,0}(x) \\ \vv_{l,k,1}(x)
\end{bmatrix}, \: \la_{l,k} \ne \tilde \la_{l,k}, \\ \label{defpsi2}
\begin{bmatrix}
\psi_{l,k,0}(x) \\ \psi_{l,k,1}(x)
\end{bmatrix} & := 
w_{l,k}^{-1}(x)
\begin{bmatrix}
\dot{\vv}_{l,k,1}(x) \\ \vv_{l,k,1}(x)
\end{bmatrix}, \: \la_{l,k} = \tilde \la_{l,k}, 
\end{align}
\vspace*{-5pt}
\begin{multline} \label{defR1}
\begin{bmatrix}
\tilde R_{(l_0,k_0,0),(l,k,0)}(x) & \tilde R_{(l_0,k_0,0),(l,k,1)}(x) \\
\tilde R_{(l_0,k_0,1),(l,k,0)}(x) & \tilde R_{(l_0,k_0,1),(l,k,1)}(x)
\end{bmatrix} \\ := 
\frac{w_{l,k}(x)}{w_{l_0,k_0}(x)}
P_{l_0,k_0}^{-1}
\begin{bmatrix}
\tilde G_{(l,k,0),(l_0,k_0,0)}(x) & -\tilde G_{(l,k,1),(l_0,k_0,0)}(x) \\
\tilde G_{(l,k,0),(l_0,k_0,1)}(x) & -\tilde G_{(l,k,1),(l_0,k_0,1)}(x)
\end{bmatrix} P_{l,k}, \quad \la_{l_0,k_0} \ne \tilde \la_{l_0,k_0}.
\end{multline}
\vspace*{-5pt}
\begin{multline} \label{defR2}
\begin{bmatrix}
\tilde R_{(l_0,k_0,0),(l,k,0)}(x) & \tilde R_{(l_0,k_0,0),(l,k,1)}(x) \\
\tilde R_{(l_0,k_0,1),(l,k,0)}(x) & \tilde R_{(l_0,k_0,1),(l,k,1)}(x)
\end{bmatrix} \\ := 
\frac{w_{l,k}(x)}{w_{l_0,k_0}(x)}
\begin{bmatrix}
\dot{\tilde G}_{(l,k,0),(l_0,k_0,1)}(x) & -\dot{\tilde G}_{(l,k,1),(l_0,k_0,1)}(x) \\
\tilde G_{(l,k,0),(l_0,k_0,1)}(x) & -\tilde G_{(l,k,1),(l_0,k_0,1)}(x)
\end{bmatrix} P_{l,k}, \quad \la_{l_0,k_0} = \tilde \la_{l_0,k_0}.
\end{multline}
Analogously to $\psi_{l,k,\eps}(x)$, define $\tilde \psi_{l,k,\eps}(x)$ by using $\tilde \vv_{l,k,\eps}(x)$ instead of $\vv_{l,k,\eps}(x)$.

It follows from the relation \eqref{relPhik} that
\begin{equation} \label{relpsi}
\psi_{l_0,k_0,\eps_0}(x) = \tilde \psi_{l_0,k_0,\eps_0}(x) + \sum_{(l,k,\eps) \in V} \tilde R_{(l_0,k_0,\eps_0), (l,k,\eps)}(x) \psi_{l,k,\eps}(x), \quad (l_0,k_0,\eps_0) \in V, \: x \in [0,1].
\end{equation}

The relations \eqref{relpsi} can be treated as a system of main equations for solving Inverse Problem~\ref{ip:main}. Indeed, suppose that the spectral data $\{ \la_{l,k}, \be_{l,k} \}_J$ and the model vector $\tilde \tau$ are given. Then, one can construct the functions $\tilde \psi_{l,k,\eps}(x)$ and $\tilde R_{(l_0,k_0,\eps_0), (l,k,\eps)}(x)$ by using \eqref{defpsi1}, \eqref{defpsi2} and \eqref{defR1}, \eqref{defR2}, respectively. By solving the system \eqref{relpsi}, the solution $\{ \psi_{l,k,\eps}(x) \}_V$ is obtained, and then one can find
\begin{equation} \label{findvv}
\begin{bmatrix}
\vv_{l,k,0}(x) \\ \vv_{l,k,1}(x)
\end{bmatrix} = w_{l,k}(x) P_{l,k} 
\begin{bmatrix}
\psi_{l,k,0}(x) \\ \psi_{l,k,1}(x)
\end{bmatrix}
\end{equation}
by inverting \eqref{defpsi1} and \eqref{defpsi2}.
Using $\vv_{l,k,\eps}(x)$, one obtains $\Phi_{k_0}(x,\la)$ ($k_0 = \overline{1,n}$) by the formula \eqref{relPhik} and recovers $\{ \tau_{\nu} \}_{\nu = 0}^{n-2}$.

\begin{remark} \label{rem:modif}
The difference of the system \eqref{relpsi} from its analogs in previous studies is contained in the terms $\psi_{l,k,0}(x)$ and $\tilde R_{(l_0,k_0,0),(l,k,\eps)}$ for $\la_{l,k} = \tilde \la_{l,k}$ and $\la_{l_0,k_0} = \tilde \la_{l_0,k_0}$, respectively. This case was treated in another way in \cite{Bond22} and excluded from consideration in \cite{Yur02, Bond24}. We use the derivatives by $\la$ in \eqref{defpsi2} and \eqref{defR2}, which makes the corresponding components $\psi_{l,k,0}(x)$ and $\tilde R_{(l_0,k_0,0),(l,k,\eps)}(x)$ continuous with respect to the eigenvalues. This property is important for studying the uniform stability for the inverse problem. 
In view of \eqref{findvv} and \eqref{relPhik}, the components $\psi_{l,k,0}(x)$ for $\la_{l,k} = \tilde \la_{l,k}$ do not influence $\Phi_{k_0}(x,\la)$ and therefore are uniquely determined by the other components $\psi_{l,k,\eps}(x)$. Consequently, the solvability of the system \eqref{relpsi} is equivalent to the solvability of the main equation (6) in \cite{Bond24}. 
\end{remark}

Let us represent the system \eqref{relpsi} in the operator form.
Put $v = (l,k,\eps)$ and $v_0 = (l_0,k_0,\eps_0)$. 
Denote
\begin{equation} \label{defxi}
\xi_l  := \sum_{k = 1}^{n-1} \left( l^{-(n-1)} |\la_{l,k} - \tilde \la_{l,k}| + l^{-n} |\be_{l,k} - \tilde \be_{l,k}| \right), \quad l \ge 1.
\end{equation}

There hold the estimates
\begin{equation} \label{estpsiR}
|\psi_v(x)|, \, |\tilde \psi_v(x)| \le C, \quad |\tilde R_{v_0,v}(x)| \le \frac{C \xi_l}{|l - l_0| + 1}, \quad v, v_0 \in V, \quad x \in [0,1].
\end{equation}

Recall that $m$ is the Banach space of bounded infinite sequences $a = [a_v]_{v \in V}$ with the norm $\| a \|_m = \sup_{v \in V} |a_v|$.
In view of \eqref{estpsiR}, the vectors $\psi(x) := [\psi_v(x)]_V$ and $\tilde \psi(x) := [\tilde \psi_v(x)]_V$ belong to $m$ for each fixed $x \in [0,1]$.
Moreover, the linear operator $\tilde R(x)$ given by the rule 
\begin{equation} \label{opR}
(\tilde R(x) a)_{v_0} = \sum_{v \in V} \tilde R_{v_0, v}(x) a_v, \quad a \in m,
\end{equation}
is bounded from $m$ to $m$:
\begin{equation} \label{estop}
\| \tilde R(x) \|_{m \to m} = \sup_{v_0 \in V} \sum_{v \in V} |\tilde R_{v_0,v}(x)| \le C \left( \sum_{l= 1}^{\infty} \xi_l^2 \right)^{1/2} < \infty, \quad x \in [0,1].
\end{equation}

Thus, the system \eqref{relpsi} can be represented in the operator form:
\begin{equation} \label{maineq}
(I - \tilde R(x)) \psi(x) = \tilde \psi(x), \quad x \in [0,1],
\end{equation}
where $I$ is the identity operator.

Due to \cite[Theorem~1]{Bond22}, the operator $(I - \tilde R(x))$ has a bounded inverse on $m$ for each fixed $x \in [0,1]$, so equation \eqref{maineq} has the unique solution $\psi(x)$. The operator $(I - \tilde R(x))$ plays an important role in the proof of uniform stability theorems in the next sections.

\section{Uniform bounds} \label{sec:bound}

In this section, we consider Inverse Problem~\ref{ip:main} in the non-self-adjoint case and prove that its solution is uniformly bounded under some a priori conditions on the spectral data.

Assume that $\tilde \tau \in \mathbf{W}_{simp}$ is fixed. Then, for any sequence $\{ \la_{l,k}, \be_{l,k} \}_J \in \mathcal S(\tilde \tau)$ satisfying the conditions \eqref{assump}, one can construct the linear bounded operator $\tilde R(x)$ in the Banach space $m$ following the arguments of Section~\ref{sec:maineq}.

\begin{defin} \label{def:SOe}
For $\Omega, K, \de > 0$ and $\tilde \tau \in \mathbf{W}_{simp}$, denote by $\mathcal S_{\Omega,K,\de} = \mathcal S_{\Omega, K, \de}(\tilde \tau)$ the set of sequences $\{ \la_{l,k}, \be_{l,k} \}_J$ in $\mathcal S(\tilde \tau)$ such that 
\begin{equation} \label{Xixi}
\Xi = \left(\sum\limits_{l = 1}^{\infty} \bigl(l^{n-1}\xi_l\bigr)^2\right)^{1/2} \le \Omega
\end{equation}
and
\begin{equation} \label{bbound}
|\la_{l,k} - \tilde \la_{s,k\pm1}| \ge \de \quad \text{for all $l,s\ge1$ and $k$},
\end{equation}
moreover, the corresponding operator $(I - \tilde R(x))$ has a bounded inverse on $m$ for each fixed $x \in [0,1]$, and
\begin{equation} \label{boundK}
\bigl\| (I - \tilde R(x))^{-1}\bigr\|_{m \to m} \le K, \quad x \in [0,1].
\end{equation}
\end{defin}

Note that the conditions \eqref{bbound} imply \eqref{assump}, which is essential for the operator $\tilde R(x)$ to be correctly defined. Furthermore, the bounds \eqref{estop}, \eqref{Xixi}, and \eqref{bbound}
imply the uniform bound for the operator:
\begin{equation} \label{uniR}
\| \tilde R(x) \|_{m \to m} \le C(\Omega,\de), \quad x \in [0,1].
\end{equation}

The goal of this section is to prove the following theorem.

\begin{thm} \label{thm:unibound}
Every sequence $\{ \la_{l,k}, \be_{l,k} \}_J \in \mathcal S_{\Omega,K,\de}$ is the spectral data of a unique vector $\tau \in \mathbf{W}_{simp}$ and
\begin{equation} \label{unibound}
\| \tau_{\nu} \|_{W_2^{\nu}[0,1]} \le C(\Omega,K,\de), \quad \nu = \overline{0,n-2}.
\end{equation}
\end{thm}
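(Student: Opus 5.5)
Existence and uniqueness come first and are mostly contained in earlier work; the uniform estimate \eqref{unibound} is the real content. Uniqueness in $\mathbf{W}_{simp}$ is already known (see the discussion after Inverse Problem~\ref{ip:main}). For existence, given $\{\la_{l,k},\be_{l,k}\}_J \in \mathcal S_{\Omega,K,\de}$, I use the assumption \eqref{boundK}: the main equation \eqref{maineq} is uniquely solvable for each $x$. By Remark~\ref{rem:modif}, this is equivalent to solvability of the main equation of \cite{Bond24}. The constructive procedure of \cite{Bond24} (Leibenzon/Yurko method of spectral mappings) then builds $\Phi_{k_0}(x,\la)$ via \eqref{findvv} and \eqref{relPhik}, and the coefficients $\tau_\nu$ via the reconstruction formulas. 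In the non-self-adjoint case I rely on the version of the sufficiency argument in \cite{Bond24, Bond22} that assumes only the invertibility of $I-\tilde R(x)$ and the asymptotics \eqref{defXi}. That argument shows that the constructed $\tau$ lies in $\mathbf{W}$ and has the given spectral data, with (A-1), (A-2) inherited from $\mathcal S$.

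For the estimate, I follow every step of this reconstruction while keeping track of constants. First, from \eqref{maineq}, \eqref{boundK} and the bound $\|\tilde\psi(x)\|_m \le C$ (depending only on $\tilde\tau$ and $\de$), I get $\|\psi(x)\|_m \le CK$ uniformly in $x$. Differentiating \eqref{maineq} in $x$ gives similar bounds for $\psi^{(j)}(x)$ up to order $n-1$ or so; each derivative of $\tilde R$ and $\tilde\psi$ obeys an estimate like \eqref{estpsiR}, with the extra growth $l^j$ absorbed after the $w_{l,k}$ normalization. Next, I decompose $\psi=\tilde\psi+(I-\tilde R)^{-1}\tilde R\tilde\psi$. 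This shows that the differences $\eps_{l,k}(x):=\psi-\tilde\psi$ have $\ell_2$-type smallness weighted by $\xi_l$: since $|\tilde R_{v_0,v}|\le C\xi_l/(|l-l_0|+1)$, the discrete Hilbert-type operator is bounded on $\ell_2$. Hence $\sum_{l_0}\sup_x|\psi_{v_0}-\tilde\psi_{v_0}|^2\le C(\Omega,K,\de)$, and the same holds for the derivatives.

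The coefficients are then recovered through series of the form $\tilde\tau_\nu-\tau_\nu=\sum_{V}(\dots)\,\be_{l,k,\eps}\,\vv_{l,k,\eps}(x)\,(\text{derivatives of }\tilde\Phi^\star_{n-k+1})$, as in \cite{Bond22, Bond24}. I regroup each series with the matrices $P_{l,k}$, so that every term carries a factor $\chi_{l,k}$, a difference $\be_{l,k}-\tilde\be_{l,k}$, or a difference $\psi-\tilde\psi$. This is the main obstacle. These series converge only in $L_2$ and $W_2^\nu$ sense, not absolutely. Their leading parts are exponential-type sums $\sum_l a_l \exp(\pm i c l x)\cdots$, and the $W_2^\nu$-norm of these sums must be bounded through Riesz-basis/Bessel-type inequalities by $C\bigl(\sum_l (l^{n-1}\xi_l)^2\bigr)^{1/2}\le C\Omega$. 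The remainders are handled using the uniform estimates above. I would first treat $\nu=n-2$, using the formula for the highest coefficient from the asymptotics of $\Phi_{k}$ near the diagonal. I would then proceed downward in $\nu$, since each lower coefficient is expressed through higher ones plus similar series, and at each step the constants depend only on $(\Omega,K,\de)$ and the fixed $\tilde\tau$.
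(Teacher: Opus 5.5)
Your skeleton matches the paper's. You solve \eqref{maineq} using \eqref{boundK} and obtain uniform estimates for $\vv_{l,k,\eps}$ and their derivatives (Lemma~\ref{lem:vv}). You insert them into the series $T_{j_1,j_2}$ of \eqref{recp}, recover $p_s$ and then $\tau_\nu$ via \eqref{rectau} from the top index down, and check the spectral data as in \cite[Lemma~6]{Bond23-loc}.

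\textbf{The gap.} It lies in the step you yourself call the main obstacle: for $n \ge 3$ the leading parts of the summands are \emph{not} purely oscillating. On the rays carrying $\la_{l,k}$, $\tilde\Phi_{k+1}(x,\la)$ contains $e^{\rho\om_k x}$ and $e^{\rho\om_{k+1}x}$ with equal moduli. Meanwhile $\tilde\Phi^\star_{n-k+1}(x,\la)$ contains $e^{-\rho\om_k x}$ and $e^{-\rho\om_{k+1}x}$. So $\tilde\vv^{(j_1)}_{l,k,\eps}\tilde\eta^{(j_2)}_{l,k,\eps}$ has, besides the oscillating terms $e^{\pm\rho(\om_k-\om_{k+1})x}$, $x$-independent terms proportional to $\be_{l,k,\eps}\rho^{-n}(\rho\om_j)^{j_1}(-\rho\om_j)^{j_2}$, $j\in\{k,k+1\}$.

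Take $j_1+j_2=n-1$. This is the total order occurring in $p_0$, and in the $s$-th derivatives of $T_{j_1,j_2}$, $j_1+j_2=n-s-1$, if you expand them into the individual series $T_{j_1+a,j_2+b}$. Then the $\eps$-difference of these constants is of order $l^{-1}|\be_{l,k}-\tilde\be_{l,k}|$. By \eqref{defXi} this sequence is in $\ell_2$ but not in $\ell_1$. Since these terms do not oscillate, no Bessel or Riesz-basis inequality controls their sum, and the individual series diverge in general. For $n=2$ the two contributions cancel because $\om_1=-\om_2$, which is why the issue is invisible there.

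\textbf{The missing idea.} The constants $a_{l,k,j_1,j_2}$ satisfy $a_{l,k,j_1,j_2}=-a_{l,k,j_1-1,j_2+1}$, as in \eqref{rela}, because $(e^{\rho\om x}e^{-\rho\om x})'=0$. Hence they cancel in $T'_{j_1,j_2}=T_{j_1+1,j_2}+T_{j_1,j_2+1}$. They also cancel in the combination $t_{n,0}+(-1)^nT_{0,n-1}$ of \eqref{recp}, using $\sum_{u=0}^{n-1}(-1)^uC_n^{u+1}=1$, provided the series are summed termwise as in \eqref{sum}. This is the mechanism of Lemma~\ref{lem:reg}, taken from \cite[Lemma~8]{Bond22}. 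Only after these constants are removed do Bessel-type bounds by $C\Omega$ apply. Without this, your $W_2^\nu$ bounds are unproved. So is $\tau\in\mathbf W$ itself, since the sufficiency arguments of \cite{Bond24} you cite only give $\tau_\nu\in W_2^{\nu-1}$.

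\textbf{Two further repairs.} First, your $\ell_2$ bound for $\psi-\tilde\psi$ is true, but not for the reason given. The kernel $(|l-l_0|+1)^{-1}$ is unbounded on $\ell_2$ (Remark~\ref{rem:ln}), and $(I-\tilde R(x))^{-1}$ is controlled only on $m$. Instead write $\psi-\tilde\psi=\tilde R\psi$ and use $\|\psi\|_m\le CK$. Cauchy--Schwarz with $\xi_l\le l^{-1}\cdot l^{n-1}\xi_l$ then gives $|\psi_{v_0}-\tilde\psi_{v_0}|\le C\chi_{l_0}$, as in Lemma~\ref{lem:vv}.

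Second, $x$-derivatives of $\tilde R_{v_0,v}$ do not obey a bound like \eqref{estpsiR}. Here $\partial_x\tilde D$ is the product $\tilde\Phi^\star\tilde\Phi$, with no decay in $|l-l_0|$. Also $\psi'_v$ grows like $l$, so $\psi'(x)\notin m$. The derivative estimates must be run in weighted sequence spaces using $\sum_l l^{n-2}\xi_l\le C\Omega$, or via the differentiated relation \eqref{relPhik} as in \cite[Lemma~2]{Bond23-loc}.

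With these repairs, the part of the series containing $\psi-\tilde\psi$ (the paper's $\hat T_{j_1,j_2}$) is harmless. Each term carries a factor $\xi_l$ times an $\ell_2$-small factor, so Cauchy--Schwarz with $\{l^{n-1}\xi_l\}\in\ell_2$ suffices. The real difficulty is confined to the model part $\tilde T_{j_1,j_2}$ discussed above.
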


Theorem~\ref{thm:unibound} is proved analogously to Theorem~2 in \cite{Bond23-loc}. The most principal differences are as follows:
\begin{itemize}
\item The coefficients $\tau_{\nu}$ in this paper have one more derivative than in \cite{Bond23-loc}, and so do auxiliary functions such as $\psi_{l,k,\eps}(x)$, $\vv_{l,k,\eps}(x)$, etc.
\item We need estimates to be uniform on $\mathcal S_{\Omega,K,\de}$, which is achieved using the conditions \eqref{defXi}, \eqref{bbound}, and \eqref{boundK} in Definition~\ref{def:SOe}.
\end{itemize}

Thus, we outline the proof of Theorem~\ref{thm:unibound} briefly, not elaborating into technical details.

\begin{proof}[Proof of Theorem~\ref{thm:unibound}]
Consider the main equation \eqref{maineq} constructed by $\{ \la_{l,k}, \be_{l,k} \}_J$ in $\mathcal S_{\Omega,K,\de}$. Since the bounded inverse operator $(I - \tilde R(x))^{-1}$ exists for each $x \in [0,1]$, we can find the unique solution of \eqref{maineq}:
$$
\psi(x) = [\psi_{l,k,\eps}(x)]_V := (I - \tilde R(x))^{-1} \tilde \psi(x).
$$

Next, we construct the functions $[\vv_{l,k,\eps}(x)]_V$ by \eqref{findvv} and obtain the following estimates for them analogously to \cite[Lemma~2]{Bond23-loc}.

\begin{lem} \label{lem:vv}
For $(l,k,\eps) \in V$, we have $\vv_{l,k,\eps} \in C^{n-1}[0,1]$ and 
\begin{gather*}
|\vv_{l,k,\eps}^{(\nu)}(x)| \le C l^{\nu} w_{l,k}(x), \quad
|\vv_{l,k,0}^{(\nu)}(x) - \vv_{l,k,1}^{(\nu)}(x)| \le C l^{\nu} w_{l,k}(x) \xi_l, \quad \nu = \overline{0,n-1}, \\
|\vv_{l,k,\eps}(x) - \tilde \vv_{l,k,\eps}(x)| \le C w_{l,k}(x) \chi_l, \\
|\vv_{l,k,0}(x) - \vv_{l,k,1}(x) - \tilde \vv_{l,k,0}(x) + \tilde \vv_{l,k,1}(x)| \le C w_{l,k}(x) \chi_l \xi_l, \\
\left.\begin{array}{c}
|\vv_{l,k,\eps}^{(\nu)}(x) - \tilde \vv_{l,k,\eps}^{(\nu)}(x)| \le C l^{\nu - 1}w_{l,k}(x), \\
|\vv_{l,k,0}^{(\nu)}(x) - \vv_{l,k,1}^{(\nu)} - \tilde \vv_{l,k,0}^{(\nu)}(x) + \tilde \vv_{l,k,1}^{(\nu)}(x)| \le C  l^{\nu - 1} w_{l,k}(x) \xi_l, 
\end{array} \right\}
\quad \nu = \overline{1,n-1},
\end{gather*}
where $x \in [0,1]$, $C = C(\Omega,K,\de)$, and
$$
\chi_l := \left( \sum_{k = 1}^{\infty} \frac{1}{k^2 (|l-k|+1)^2} \right)^{1/2}, \quad \{ \chi_l \}_{l \ge 1} \in l_2.
$$
\end{lem}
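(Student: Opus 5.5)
Within the proof of Theorem~\ref{thm:unibound}, the functions $\vv_{l,k,\eps}$ are \emph{defined} from the solution $\psi(x) = (I - \tilde R(x))^{-1}\tilde\psi(x)$ by \eqref{findvv}. The plan is to get their regularity and all the estimates of Lemma~\ref{lem:vv} from the main equation \eqref{maineq}, differentiated in $x$ up to $n-1$ times. Each bound will be uniform because it uses only \eqref{boundK}, \eqref{uniR}, \eqref{Xixi}, \eqref{bbound} and the known asymptotics of the model objects $\tilde\Phi_k$, $\tilde D_{k,k_0}$ (Appendix~\ref{app:Weyl}).

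\emph{Step 1: model-side estimates.} From the asymptotics of the Weyl solutions of $\tilde\tau$ one gets $|\tilde\vv^{(\nu)}_{l,k,\eps}(x)| \le C l^\nu w_{l,k}(x)$. The difference $\tilde\vv_{l,k,0}-\tilde\vv_{l,k,1}$ is controlled by $l^{1-n}|\la_{l,k}-\tilde\la_{l,k}|$ times a $\la$-derivative bound, which gives the $\xi_l$-factors. Analogous $\nu$-derivative versions of \eqref{estpsiR} should hold:
\[
|\tilde\psi^{(\nu)}_v(x)| \le C l^\nu, \qquad |\tilde R^{(\nu)}_{v_0,v}(x)| \le C\xi_l\,\frac{l^\nu + l_0^\nu}{|l-l_0|+1}.
\]
These come from differentiating $\tilde D$ (each $x$-derivative produces a product of the integrands $\tilde\Phi^\star\tilde\Phi$, bounded by $w$-weights). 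Condition \eqref{bbound} keeps $\la_{l,k,\eps}$ uniformly away from the poles of $\tilde\Phi_{k+1}$ and $\tilde\Phi^\star_{n-k+1}$. That yields constants $C(\Omega,\de)$; the $\be$-factors are handled by $\Xi \le \Omega$.

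\emph{Step 2: solution-side bounds.} The bound $|\psi_v(x)| \le K\|\tilde\psi(x)\|_m \le C$ is immediate. Then \eqref{findvv} together with $|\chi_{l,k}| \le \xi_l$ gives $|\vv_{l,k,\eps}| \le Cw_{l,k}$ and $|\vv_{l,k,0}-\vv_{l,k,1}| \le C w_{l,k}\xi_l$. For the differences with the model, write $\psi-\tilde\psi = (I-\tilde R)^{-1}\tilde R\tilde\psi$. One then has to show $|(\tilde R\tilde\psi)_{v_0}| \le C\chi_{l_0}$. Here Schur-type summation applies: $\sum_l \xi_l/(|l-l_0|+1)$, with $\xi_l \le C l^{1-n}\cdot(\text{$l_2$ sequence})$, is bounded by $C\chi_{l_0}$ via Cauchy--Schwarz, since $\xi_l \le \Omega\, l^{1-n}\cdot a_l$ and $l^{1-n} \le l^{-1}$. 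However, $K$ controls $(I-\tilde R)^{-1}$ only in $m$, not weighted by $\chi$. I would therefore use $\psi-\tilde\psi = \tilde R\psi$ directly instead: $|(\tilde R\psi)_{v_0}| \le C\sum_l \xi_l/(|l-l_0|+1) \le C\chi_{l_0}$. The mixed second differences (the $\chi_l\xi_l$ bound) follow the same way, using the $P$-structure, which extracts a factor $\xi_{l_0}$ in the rows for $\eps_0=0$.

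\emph{Step 3: derivatives.} Differentiating \eqref{relpsi} in $x$, I obtain
\[
\psi^{(\nu)} - \tilde\psi^{(\nu)} = \sum_{j\le\nu} \binom{\nu}{j}\,\tilde R^{(\nu-j)}\psi^{(j)}.
\]
This relation is better expressed through \eqref{relPhik} written for $\vv$ (i.e., $\vv_{l_0,k_0,\eps_0}$ equals $\tilde\vv_{l_0,k_0,\eps_0}$ plus a sum over $V$ of $\vv\,\tilde D$). Differentiating there, the derivative of $\tilde D$ in $x$ is the product $\tilde\Phi^\star\tilde\Phi$, which is explicitly estimable. Hence $\vv^{(\nu)}_{l_0,\dots}-\tilde\vv^{(\nu)}_{l_0,\dots}$ equals a sum over $V$ of $\vv\,\tilde D^{(\nu)}$ plus lower-order terms containing $\vv^{(j)}$ with $j<\nu$, a triangular structure that allows induction on $\nu$. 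The gain of $l^{-1}$ (the bound $l^{\nu-1}$ rather than $l^\nu$) comes from the fact that $\tilde D$ has a $1/(\la-\mu)$-type decay in $l$. Smoothness $C^{n-1}$ follows from uniform convergence of the differentiated series, which is itself guaranteed by these bounds.

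\emph{Main obstacle.} The hardest part is Step 3 at top order $\nu=n-1$. There the series $\sum_v \vv\,\partial_x^{\nu}\tilde D$ converges only conditionally-like, because the weights $l^{\nu}$ are compensated solely by $\xi_l \sim l^{1-n}a_l$. My first attempt would be the crude route: bound $\partial_x^{j}\tilde D$ termwise via $w$-weights, and absorb $w_{l,k}/w_{l_0,k_0}$ through the exponential decay structure, as in \cite[Lemma~2]{Bond23-loc}. If that is insufficient, I would pair the $\eps=0,1$ terms through the matrices $P_{l,k}$ to gain the extra $\xi_l$.
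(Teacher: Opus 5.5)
Your treatment of $\nu=0$ is correct and matches the main-equation argument the paper relies on; the paper itself only says ``analogously to \cite[Lemma~2]{Bond23-loc}''. That argument runs as follows: $\|\psi(x)\|_m\le K\|\tilde\psi(x)\|_m$; then $\psi-\tilde\psi=\tilde R\psi$, together with $\xi_l\le l^{1-n}a_l$, $\|\{a_l\}\|_{l_2}\le\Omega$, and Cauchy--Schwarz, gives $|\psi_{v_0}-\tilde\psi_{v_0}|\le C\chi_{l_0}$; finally \eqref{findvv} gives $\vv_{l,k,0}-\vv_{l,k,1}=w_{l,k}\chi_{l,k}\psi_{l,k,0}$ (and likewise with tildes), which yields all the $\nu=0$ bounds.

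The gap is in Step~3. Write \eqref{relpsi} in unnormalized form, $\vv_{v_0}=\tilde\vv_{v_0}+\sum_{v}(-1)^{\eps}\vv_{v}\tilde G_{v,v_0}$, and differentiate $\nu$ times using $\partial_x\tilde G_{v,v_0}=\tilde\eta_v\tilde\vv_{v_0}$. This gives
\[
\vv^{(\nu)}_{v_0}=\tilde\vv^{(\nu)}_{v_0}+\sum_{v\in V}(-1)^{\eps}\vv^{(\nu)}_{v}\tilde G_{v,v_0}+\sum_{r=1}^{\nu}\sum_{j=0}^{r-1}C_\nu^r C_{r-1}^{j}\,T_{\nu-r,j}(x)\,\tilde\vv^{(r-1-j)}_{v_0}(x).
\]
The middle sum contains the top-order unknown $\vv^{(\nu)}$. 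Your description (``$\vv\,\tilde D^{(\nu)}$ plus terms with $\vv^{(j)}$, $j<\nu$'') drops this sum, so the relation is not triangular. Normalized as in \eqref{defpsi1}, it reads $(I-\tilde R(x))\Psi^{[\nu]}(x)=F^{[\nu]}(x)$ with the same operator $\tilde R(x)$. However, $\Psi^{[\nu]}$ and $F^{[\nu]}$ grow like $l^{\nu}$ and are not in $m$, so \eqref{boundK} does not apply.

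This is the obstacle you already met in Step~2. Here the escape $\psi-\tilde\psi=\tilde R\psi$ is unavailable, because no a priori bound on $\vv^{(\nu)}$, or even its existence, is known. The series you single out as the main obstacle are harmless: after pairing $\eps=0,1$ they are $T_{a,b}$ with $a+b\le n-2$, which converge absolutely since $\sum_l l^{n-2}\xi_l\le C\Omega$. Likewise, ``uniform convergence of the differentiated series'' presupposes the very bound on $\vv^{(\nu)}$ you are trying to prove.

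The missing idea is a weighted inversion. Let $m_\nu$ be the space of $a=[a_v]_V$ with $\|a\|_{m_\nu}:=\sup_V l^{-\nu}|a_v|<\infty$. By \eqref{estpsiR} and \eqref{Xixi}, for $\nu\le n-1$,
\[
\sup_{v_0\in V}\sum_{v\in V}|\tilde R_{v_0,v}(x)|\,l^{\nu}\le C\sup_{l_0\ge1}\sum_{l=1}^{\infty}\frac{l^{n-1}\xi_l}{|l-l_0|+1}\le C(\Omega,\de),
\]
so $\tilde R(x)$ maps $m_\nu$ boundedly into $m$. Hence $I-\tilde R(x)$ is invertible on $m_\nu$, with inverse $I+(I-\tilde R(x))^{-1}\tilde R(x)$ of norm at most $1+KC(\Omega,\de)$. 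Uniqueness holds because a solution $a\in m_\nu$ of $(I-\tilde R)a=0$ satisfies $a=\tilde Ra\in m$, so $a=0$.

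Now induct on $\nu$; the $T$'s in $F^{[\nu]}$ involve only $\vv^{(a)}$ with $a<\nu$. This gives $|\vv^{(\nu)}_{l,k,\eps}|\le Cl^{\nu}w_{l,k}$ and the corresponding $\xi_l$-bound. Let $\tilde\Psi^{[\nu]}$ be the analogous normalization of $\tilde\vv^{(\nu)}$. Then $\Psi^{[\nu]}-\tilde\Psi^{[\nu]}=\tilde R\Psi^{[\nu]}+(\text{terms with }\tilde\vv^{(\le\nu-1)}_{v_0})$, where $\tilde R\Psi^{[\nu]}\in m$. This, not a $1/(\la-\mu)$ decay of $\tilde D$, produces the factor $l^{\nu-1}$ in the bounds for $\vv^{(\nu)}-\tilde\vv^{(\nu)}$ and for the mixed differences. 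The same inversion, applied to difference quotients (or to the differentiated equations followed by integration), is what actually establishes $\vv_{l,k,\eps}\in C^{n-1}[0,1]$.
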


In \cite[Section~2.3.3]{Yur02} and \cite[Section~5]{Bond23-loc}, the following reconstruction formulas have been derived for the coefficients in the representation \eqref{eqp} for $\ell_n(y)$:
\begin{align} \nonumber
p_s(x) = & \, \tilde p_s(x) - \left( t_{n,s}(x) + (-1)^{n-s} T_{0,n-s-1}(x) \right) \\ \label{recp} & + 
\sum_{j = 0}^{n-s-3} \sum_{r = j}^{n-s-3} (-1)^r C_r^j \tilde p_{r+s+1}^{(r-j)}(x) T_{0,j}(x) - \sum_{r = s+1}^{n-2} p_r(x) t_{r,s}(x), 
\end{align}
for $s = n-2, n-3, \dots, 1, 0$,
where $C_n^k = \frac{n!}{k!(n-k)!}$ are the binomial coefficients and
\begin{align} 
\label{defT}
T_{j_1, j_2}(x) & := \sum_{(l,k,\eps) \in V} (-1)^{\eps} \vv_{l,k,\eps}^{(j_1)}(x) \tilde \eta_{l,k,\eps}^{(j_2)}(x), \\ \label{deft}
t_{r,s}(x) & := \sum_{u = s}^{r-1} C_r^{u+1} C_u^s T_{r-u-1,u-s}(x), \\ \label{defeta}
\tilde \eta_{l,k,\eps}(x) & := (-1)^{n-k} \be_{l,k,\eps} \tilde \Phi^{\star}_{n-k+1}(x, \la_{l,k,\eps}).
\end{align}

The summation of several series $T_{j_1,j_2}(x)$ in \eqref{recp}, \eqref{deft} and below is understood in the following sense:
\begin{equation} \label{sum}
\sum_{(l,k,\eps) \in V} a_{l,k,\eps} + \sum_{(l,k,\eps) \in V} b_{l,k,\eps} = \sum_{l = 1}^{\infty} \sum_{k = 1}^{n-1} (a_{l,k,0} + a_{l,k,1} + b_{l,k,0} + b_{l,k,1}).
\end{equation}

Similarly to \cite[Lemma~3]{Bond23-loc} and Lemma~\ref{lem:estvv}, we get the following properties for $\eta_{l,k,\eps}(x)$.

\begin{lem} \label{lem:eta}
For $(l,k,\eps) \in V$, we have $\eta_{l,k,\eps} \in C^{n-1}[0,1]$ and
$$
|\tilde \eta_{l,k,\eps}^{(\nu)}(x)| \le C(\Omega,\de) l^{\nu} w_{l,k}^{-1}(x), \quad
|\tilde \eta_{l,k,0}^{(\nu)}(x) - \tilde \eta_{l,k,1}^{(\nu)}(x)| \le C(\Omega,\de) l^{\nu} w_{l,k}^{-1}(x) \xi_l, \quad \nu = \overline{0,n-1}.
$$
\end{lem}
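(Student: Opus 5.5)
The estimates concern only the model vector $\tilde\tau$, which is fixed, together with the spectral parameters $\la_{l,k,\eps}$ and $\be_{l,k,\eps}$. We have $\tilde \eta_{l,k,\eps}(x) = (-1)^{n-k}\be_{l,k,\eps}\tilde\Phi^{\star}_{n-k+1}(x,\la_{l,k,\eps})$, and $\tilde\Phi^{\star}_{n-k+1}$ is the Weyl solution of the adjoint equation \eqref{eqstar} with coefficients $\tilde\tau$. So no information about the unknown $\tau$ is needed. The plan is to transfer the bounds for the Weyl solutions from Appendix~\ref{app:Weyl} (the analog of Lemma~\ref{lem:estvv} for the adjoint equation) to the points $\la_{l,k,\eps}$, and then multiply by $|\be_{l,k,\eps}|$.

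\emph{Step 1: bound on a single $\tilde\eta$.} Since $\la^\star_{l,k}=\la_{l,n-k}$, the function $\tilde\Phi^\star_{n-k+1}(x,\la)$ has its poles at $\{\tilde\la_{s,k-1}\}_{s\ge1}$. Away from these poles its asymptotics has the exponential factor dual to that of $\Phi_{k+1}$. For $\rho=\la^{1/n}$ in the appropriate sector with $|\rho|\asymp l$, one obtains
\[
|\tilde\Phi^{\star(\nu)}_{n-k+1}(x,\la)|\le C\,l^{\nu-(n-k)}\exp\bigl(xl\cot(k\pi/n)\bigr),
\]
uniformly for $\la$ lying at distance at least $\de$ from the poles. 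Condition \eqref{bbound} gives exactly this separation for $\la=\la_{l,k,0}$. For $\la=\tilde\la_{l,k,1}$ it follows from (A-2) for $\tilde\tau$. By \eqref{Xixi}, $|\la_{l,k}-\tilde\la_{l,k}|\le\Omega\, l^{0}$ is bounded, so $\la_{l,k}$ lies in a fixed-radius neighbourhood of $\tilde\la_{l,k}$, where the asymptotic representation of the Weyl solutions is uniform. Furthermore, by \eqref{asymptbe} and $\Xi\le\Omega$, we have $|\be_{l,k,\eps}|\le C(\Omega)l^n$. The product therefore gives $C l^{\nu+k}e^{xl\cot(k\pi/n)}=Cl^{\nu}w_{l,k}^{-1}(x)$. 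The smoothness $\tilde\eta\in C^{n-1}$ follows because $\tilde\Phi^\star$ solves an equation with $\tilde\tau\in\mathbf W$ (quasi-derivatives coincide with ordinary derivatives up to order $n-1$ in this smoothness class).

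\emph{Step 2: the difference.} We write
\[
\tilde\eta_{l,k,0}-\tilde\eta_{l,k,1}=(\be_{l,k}-\tilde\be_{l,k})\tilde\Phi^\star(\la_{l,k})+\tilde\be_{l,k}\bigl(\tilde\Phi^\star(\la_{l,k})-\tilde\Phi^\star(\tilde\la_{l,k})\bigr).
\]
For the first term, $|\be-\tilde\be|\le l^n\xi_l$, which gives the bound $l^\nu w^{-1}\xi_l$. For the second term, we bound the difference by $|\la_{l,k}-\tilde\la_{l,k}|\sup|\partial_\la\tilde\Phi^{\star(\nu)}|$ over the segment joining the two points. The $\la$-derivative of the Weyl solution satisfies an estimate with an extra factor $l^{1-n}$. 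This follows from Cauchy's formula on a circle of radius $\asymp l^{n-1}$ in $\la$ (radius $\asymp 1$ in $\rho$), or equivalently from $d\la/d\rho\asymp l^{n-1}$. Multiplying by $|\tilde\be|\le Cl^n$ then gives $Cl^{\nu}w^{-1}\,l^{1-n}|\la-\tilde\la|\le Cl^\nu w^{-1}\xi_l$.

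\emph{Main obstacle.} The main obstacle is the uniform derivative estimate in Step~2 near the poles $\tilde\la_{s,k-1}$. A Cauchy circle of radius $\asymp l^{n-1}$ may enclose such poles, so the contour argument cannot be applied directly to $\tilde\Phi^\star$. The first thing I would try is to exploit the structure of the Weyl solution: $\tilde\Phi^\star$ is a ratio $\Delta^\star(x,\la)/\Delta^\star_{k-1}(\la)$ of entire functions. The numerator and the characteristic determinant both admit uniform $\rho$-estimates from Appendix~\ref{app:Weyl}, and $|\Delta^\star_{k-1}|$ is bounded below at distance $\de$ from its zeros, again by \eqref{bbound}. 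I would therefore differentiate the quotient and apply Cauchy's estimate separately to the entire numerator and to the denominator, on a small circle of radius $\de/2$ in $\la$. If that proves too lossy, the fallback is to use the standard uniform lower bound for the determinant outside $\de$-discs around its zeros, which yields the needed $l^{1-n}$ factor.
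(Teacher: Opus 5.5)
Your proposal is correct and is essentially the paper's argument. The paper likewise combines the bounds \eqref{estPhi*} on $\tilde\Phi^{\star(\nu)}_{n-k+1}(x,\la_{l,k,\eps})$ and on their $\eps=0,1$ difference with $|\be_{l,k,\eps}|\le Cl^n$ and $|\be_{l,k}-\tilde\be_{l,k}|\le l^n\xi_l$, through the same product-rule splitting you use; it derives \eqref{estPhi*} from Lemma~\ref{lem:estPhik} via a Schwarz-lemma argument where you use Cauchy's estimate in $\rho$.

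The obstacle you single out does not actually occur for large $l$: the poles $\tilde\la_{s,k-1}$ lie asymptotically on the real half-axis opposite to $\la_{l,k}$, so an $O(1)$ disc in $\rho$ around $\rho_{l,k}$ is pole-free. The only point needing care is the finitely many $l<l_*$, where the segment from $\la_{l,k}$ to $\tilde\la_{l,k}$ may pass near a pole. This is handled by splitting into $|\la_{l,k}-\tilde\la_{l,k}|\ge\de/2$, where $\xi_l\ge l_*^{1-n}\de/2$ and Step~1 already gives the bound, and $|\la_{l,k}-\tilde\la_{l,k}|<\de/2$, where the segment stays at distance $\de/2$ from the poles.
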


Using Lemmas~\ref{lem:vv} and~\ref{lem:eta} together with \eqref{defT}, we conclude that, for $j_1 + j_2 = n - s - 1$, $s \in \{ 1, 2, \dots, n-1\}$, the series $T_{j_1,j_2}(x)$ converges in $W_2^s[0,1]$ and
$$
\| T_{j_1,j_2}(x) \|_{W_2^s[0,1]} \le C(\Omega,K,\de).
$$
This together with \eqref{recp} and \eqref{deft} imply that $p_s \in W_2^s[0,1]$ and
\begin{equation} \label{estp}
\| p_s \|_{W_2^s[0,1]} \le C(\Omega,K,\de).
\end{equation}

The coefficients $\{ \tau_{\nu} \}_{\nu = 0}^{n-2}$ of \eqref{eqv} can be found from $\{ p_s \}_{s = 0}^{n-2}$ as follows:
\begin{multline} \label{rectau}
2^j \tau_s = p_s - (1-j) \tau_{s+1}' - \sum_{k = \lceil (s+1)/2\rceil}^{\min \{ s, \lfloor n/2\rfloor-1 \}} C_k^{s-k} \bigl( \tau_{2k}^{(2k-s)} + \tau_{2k+1}^{(2k+1-s)}\bigr) \\ - \sum_{k = \lceil s/2\rceil}^{\min\{ s, \lfloor (n-1)/2\rfloor\}-1} 2 C_k^{s-k-1} \tau_{2k+1}^{(2k+1-s)}, \quad s = n-2, n-3, \dots, 1, 0,
\end{multline}
where $j = 0$ if $s$ is even and $j = 1$ is $s$ is odd, $\tau_{n-1} = 0$. 
This together with \eqref{estp} imply \eqref{esttau}.

Analogously to \cite[Lemma~6]{Bond23-loc}, we show that the spectral data of $\tau = \{ \tau_{\nu} \}_{\nu = 0}^{n-2}$ equal $\{ \la_{l,k}, \be_{l,k} \}_J$, which concludes the proof of Theorem~\ref{thm:unibound}.
\end{proof}

\section{Uniform stability} \label{sec:stab}

In this section, we continue the discussion of Inverse Problem~\ref{ip:main} in the general non-self-adjoint case, started in the previous section. We consider two sets of spectral data and estimate the differences of the corresponding differential expression parameters. Specifically, we prove the following theorem on the uniform stability of Inverse Problem~\ref{ip:main} in the general non-self-adjoint case.

\begin{thm} \label{thm:unistab2}
Let $\tilde \tau \in \mathbf{W}_{simp}$ and $\Omega, K, \de > 0$ be fixed. Then, for any two spectral data sets $\{ \la_{l,k,(1)}, \be_{l,k,(1)} \}_J$ and $\{ \la_{l,k,(2)}, \be_{l,k,(2)} \}_J$ in $\mathcal S_{\Omega,K,\de}(\tilde \tau)$ the corresponding solutions $\tau_{(1)} = \{ \tau_{\nu,(1)} \}_{\nu = 0}^{n-2}$ and $\tau_{(2)} = \{ \tau_{\nu,(2)} \}_{\nu = 0}^{n-2}$ of Inverse Problem~\ref{ip:main} satisfy the estimate
\begin{equation} \label{diftau}
\| \tau_{\nu,(1)} - \tau_{\nu,(2)} \|_{W_2^{\nu}[0,1]} \le C(\Omega,K,\de) Z, \quad \nu = \overline{0,n-2},
\end{equation}
where $Z$ was defined in \eqref{defZ}.
\end{thm}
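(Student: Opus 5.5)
The plan is to compare the two main equations \eqref{maineq} built from the same model $\tilde\tau$, one for $\{\la_{l,k,(1)},\be_{l,k,(1)}\}_J$ and one for $\{\la_{l,k,(2)},\be_{l,k,(2)}\}_J$. Every difference of auxiliary objects should be estimated by a quantity of the form $C(\Omega,K,\de)\zeta_l$, where
$$
\zeta_l := \sum_{k=1}^{n-1}\bigl(l^{-(n-1)}|\la_{l,k,(1)}-\la_{l,k,(2)}| + l^{-n}|\be_{l,k,(1)}-\be_{l,k,(2)}|\bigr), \qquad \Bigl(\sum_l (l^{n-1}\zeta_l)^2\Bigr)^{1/2}=Z.
$$
These estimates are then pushed through the reconstruction formulas \eqref{recp} and \eqref{rectau}, exactly as in the proof of Theorem~\ref{thm:unibound}, but at the level of differences.

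\textbf{Step 1: differences of the data of the main equation.} Since $\tilde\tau$ is common, the components with $\eps=1$ of $\tilde\psi$ and of $\tilde R$ depend only on $\tilde\la_{l,k},\tilde\be_{l,k}$ through the matrices $P_{l,k}$. The components with $\eps=0$ depend on $\la_{l,k},\be_{l,k}$ analytically through $\tilde\Phi_{k+1}(x,\la)$, $\tilde\Phi^\star_{n-k+1}(x,\la)$ and $\tilde D$. The choice of $\psi_{l,k,0}$ via divided differences $P_{l,k}^{-1}$, with the derivative version \eqref{defpsi2} when $\la_{l,k}=\tilde\la_{l,k}$, makes these objects divided differences of analytic functions, hence Lipschitz in $\la_{l,k}$ uniformly.
\begin{itemize}
\item Using Cauchy-type estimates for $\tilde\Phi_{k+1}$ on discs of radius $\sim\de$ around $\la_{l,k}$, which are admissible by \eqref{bbound}, I would prove
$$
|\tilde\psi_{v,(1)}(x)-\tilde\psi_{v,(2)}(x)| \le C\,l^{n-1}\zeta_l,
\qquad
|\tilde R_{v_0,v,(1)}(x)-\tilde R_{v_0,v,(2)}(x)| \le C\,\frac{l^{n-1}\zeta_l+l_0^{n-1}\zeta_{l_0}}{|l-l_0|+1}\,(\xi_{l,(1)}+\xi_{l,(2)}+\cdots).
$$
\item The exact powers need care: the scaling $l^{1-n}$ in $\chi_{l,k}$ is designed so that $l^{n-1}\zeta_l$ is the natural $l_2$ quantity.
\item From this I would aim for $\|\tilde R_{(1)}(x)-\tilde R_{(2)}(x)\|_{m\to m}\le C Z$ in $m$. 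A weighted version is also needed, namely bounds on $|\tilde R_{v_0,v,(1)}-\tilde R_{v_0,v,(2)}|$ involving $\zeta_l$ rather than just the sup norm, since the final $W_2$ estimate needs $l_2$ summability in $l_0$.
\end{itemize}

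\textbf{Step 2: difference of solutions.} Write
$$
\psi_{(1)}-\psi_{(2)}=(I-\tilde R_{(1)})^{-1}\bigl[(\tilde\psi_{(1)}-\tilde\psi_{(2)})+(\tilde R_{(1)}-\tilde R_{(2)})\psi_{(2)}\bigr].
$$
By \eqref{boundK} and the uniform bounds \eqref{estpsiR}, \eqref{uniR}, this gives $\|\psi_{(1)}(x)-\psi_{(2)}(x)\|_m\le CZ$.

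The crude sup bound is not enough for $W_2^s$ estimates. I would refine it componentwise. Each component of the right-hand side is of size $C\bigl(l_0^{n-1}\zeta_{l_0}+\sum_l \frac{\xi_l}{|l-l_0|+1}\,l^{n-1}\zeta_l\bigr)$. Using $\psi=\tilde\psi+\tilde R\psi$ once more, as in the bootstrapping of Lemma~\ref{lem:vv}, I would obtain
$$
|\psi_{v,(1)}(x)-\psi_{v,(2)}(x)|\le C\bigl(l^{n-1}\zeta_l+\chi_l Z\bigr).
$$
Then, via \eqref{findvv}, I would derive analogues of Lemma~\ref{lem:vv} for differences $\vv_{l,k,\eps,(1)}^{(\nu)}-\vv_{l,k,\eps,(2)}^{(\nu)}$ with the factor $\xi_l$ replaced by $\zeta_l$-type quantities. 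The same is done for $\tilde\eta_{l,k,\eps}$ in Lemma~\ref{lem:eta}, which is direct because $\tilde\eta$ depends explicitly on the data.

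\textbf{Step 3: reconstruction.} Split each difference $T_{j_1,j_2,(1)}-T_{j_1,j_2,(2)}$ into sums of products (difference of $\vv$)$\cdot\tilde\eta$ and $\vv\cdot$(difference of $\tilde\eta$). The $W_2^s$ norms are estimated exactly as for $T_{j_1,j_2}$ in Theorem~\ref{thm:unibound}, which gives $\|T_{j_1,j_2,(1)}-T_{j_1,j_2,(2)}\|_{W_2^s}\le CZ$. The recursions \eqref{recp} for $s=n-2,\dots,0$, combined with the uniform bounds \eqref{estp}, give $\|p_{s,(1)}-p_{s,(2)}\|_{W_2^s}\le CZ$. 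Then \eqref{rectau} yields \eqref{diftau}.

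\textbf{Main obstacle.} I expect the hardest part to be Step 3's $W_2^s$ convergence for the differences. There the top derivatives of $\vv$ and $\tilde\eta$ have growth $l^{\nu}$ that only cancels after the pairing $\eps=0,1$ and the exponential weights $w_{l,k}^{\pm1}$. The difference estimates must therefore preserve the paired structure, $(\vv_0-\vv_1)$ differences of differences, rather than treating $\eps=0,1$ separately. The derivative-based modification \eqref{defR2} is what should make these second differences Lipschitz in the eigenvalues uniformly, including near coincidences $\la_{l,k}=\tilde\la_{l,k}$.
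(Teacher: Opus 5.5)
\textbf{There is a genuine gap, in Step 3 at the top order of differentiation.} Your Steps 1--2 (resolvent identity with the bound $K$, then a componentwise bootstrap) are a reasonable way to handle the part of $T_{j_1,j_2}$ that depends on the solution of the main equation. The paper itself leaves that part sketchy, referring to \cite{Bond25}.

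To bound $\|T_{j_1,j_2,(1)}-T_{j_1,j_2,(2)}\|_{W_2^s}$ with $j_1+j_2=n-s-1$, you must control series with $j_1'+j_2'=n-1$. There the terms are only of pointwise size $C\,l^{n-1}\zeta_l$, even after keeping the $\eps$-pairing and taking second differences. Since $\{l^{n-1}\zeta_l\}$ is merely in $l_2$ (its $l_2$ norm is exactly $Z$), the sum $\sum_l l^{n-1}\zeta_l$ may diverge; take $l^{n-1}\zeta_l=1/l$. So no absolute or pointwise summation gives a bound by $CZ$, and that is all that ``estimated exactly as in Theorem~\ref{thm:unibound}'' amounts to with your split.

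The remedy in your obstacle paragraph does not help. In the part built from model functions, $\tilde\vv_{l,k,1}$ and $\tilde\eta_{l,k,1}$ are identical for both data sets, so the $\eps=1$ terms cancel outright and the pairing gives nothing. Uniform Lipschitz dependence, which \eqref{defR2} secures, is precisely what produces the size $l^{n-1}\zeta_l$. The culprits are the contributions of $|\la_{l,k,(1)}-\la_{l,k,(2)}|$ and $l^{-1}|\be_{l,k,(1)}-\be_{l,k,(2)}|$, which are only square summable.

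\textbf{What is missing is the paper's splitting $T_{j_1,j_2}=\tilde T_{j_1,j_2}+\hat T_{j_1,j_2}$.} Here $\tilde T$ involves only the model functions $\tilde\vv_{l,k,\eps}$, $\tilde\eta_{l,k,\eps}$ at the given eigenvalues, with no main equation. The part $\hat T$ carries $\vv_{l,k,\eps}-\tilde\vv_{l,k,\eps}$.
\begin{itemize}
\item For $\hat T$, estimates of the type $|\vv^{(\nu)}-\tilde\vv^{(\nu)}|\le C l^{\nu-1}w_{l,k}$ gain a factor $l^{-1}$. The top-order terms become $O(l^{n-2}\zeta_l)$, and $\sum_l l^{n-2}\zeta_l\le CZ$. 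This is where your Steps 1--2 belong.
\item For $\tilde T$, a non-absolute $L_2$ argument is needed. Via the Birkhoff expansions, each top-order term is a constant $a_{l,k,j_1,j_2}$ plus oscillatory exponentials. The oscillatory series with $l_2$ coefficients converge in $L_2$ with norm $\le CZ$ (Lemma~\ref{lem:reg}). The constants are not summable, but by \eqref{rela} they cancel in $\tilde T_{j_1,j_2}'=\tilde T_{j_1+1,j_2}+\tilde T_{j_1,j_2+1}$.
\end{itemize}
Without this regularization-and-cancellation step your Step 3 does not close, whatever componentwise bounds Steps 1--2 deliver.
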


\begin{proof}
Under the hypothesis of the theorem, the solutions $\tau_{(1)}$ and $\tau_{(2)}$ uniquely exist by virtue of Theorem~\ref{thm:unibound}. Furthermore, the vectors $\tau_{(1)}$ and $\tau_{(2)}$ belong to $\mathbf{W}_{simp} \cap B_Q$, where 
$$
B_Q = \bigl\{ \tau \in \mathbf{W} \colon \| \tau_{\nu} \|_{W_2^{\nu}} \le Q, \, \nu = \overline{0,n-2}\bigr\},
$$
and $Q > 0$ depends only on $\Omega$, $K$, and $\de$. 

Recall that the solutions $\tau_{\nu,(1)}$ and $\tau_{\nu,(2)}$ are constructed by using the linear relations \eqref{recp} and \eqref{rectau}. Therefore, in order to prove \eqref{diftau}, it is sufficient to obtain the estimate
\begin{gather} \label{difT}
\| T_{j_1,j_2,(1)} - T_{j_1,j_2,(2)} \|_{W_2^s[0,1]} \le C(\Omega, K, \de) Z, \\ \label{ind}
j_1,j_2 \ge 0, \quad j_1 + j_2 = n-s-1, \quad s \in \{ 1,2, \dots, n-1 \}.
\end{gather}

Represent $T_{j_1,j_2}$ as follows:
\begin{align} \label{sumTj}
T_{j_1,j_2}(x) & = \tilde T_{j_1,j_2}(x) + \hat T_{j_1,j_2}(x), \\ \nonumber
\tilde T_{j_1,j_2}(x) & := \sum_{(l,k) \in J} \bigl( \tilde \vv_{l,k,0}^{(j_1)}(x)\tilde \eta_{l,k,0}^{(j_2)}(x) - \tilde \vv_{l,k,1}^{(j_1)}(x) \tilde \eta_{l,k,1}^{(j_2)}(x)\bigr), \\ \nonumber
\hat T_{j_1,j_2}(x) & := \sum_{(l,k) \in J} \Bigl[ \bigl(\vv_{l,k,0}^{(j_1)}(x) - \tilde \vv_{l,k,0}^{(j_1)}(x)\bigr)\tilde \eta_{l,k,0}^{(j_2)}(x) - \bigl( \vv_{l,k,1}^{(j_1)}(x) - \tilde \vv_{l,k,1}^{(j_1)}(x)\bigr)\tilde \eta_{l,k,1}^{(j_2)}(x)\Bigr].
\end{align}

The summation of $\tilde T_{j_1,j_2}(x)$ and $\hat T_{j_1,j_2}(x)$ in \eqref{sumTj} and below is understood in the sense \eqref{sum}.

Let us estimate the difference
\begin{align} \nonumber
\tilde T_{j_1,j_2,(1)}(x) - \tilde T_{j_1,j_2,(2)}(x) = & \sum_J \bigl( \tilde \vv_{l,k,0,(1)}^{(j_1)}(x) \tilde \eta_{l,k,0,(1)}^{(j_2)}(x) - \tilde \vv_{l,k,0,(2)}^{(j_1)}(x) \tilde \eta_{l,k,0,(2)}^{(j_2)}(x)\bigr) \\ \nonumber = & \sum_J \bigl( \tilde \vv_{l,k,0,(1)}^{(j_1)}(x) - \tilde \vv_{l,k,0,(2)}^{(j_1)}(x) \bigr) \tilde \eta_{l,k,0,(1)}^{(j_2)}(x) \\ \label{diftT} & + \sum_J \tilde \vv_{l,k,0,(2)}^{(j_1)}(x) \bigl( \tilde \eta_{l,k,0,(1)}^{(j_2)}(x) - \tilde \eta_{l,k,0,(2)}^{(j_2)}(x)\bigr).
\end{align}

Note that the functions $\tilde \vv_{l,k,0,(u)}(x)$ and $\tilde \eta_{l,k,0,(u)}(x)$ ($u = 1, 2$) solve the equations $\tilde \ell_n(y) = \la_{l,k,(u)} y$ and $\tilde \ell_n^{\star}(z) = \la_{l,k,(u)} z$, respectively, with the fixed coefficients $\tilde \tau = \{ \tilde \tau_{\nu} \}_{\nu = 0}^{n-2}$. Moreover, due to \eqref{bbound}, there hold
$$
|\la_{l,k,(u)} - \tilde \la_{s,k\pm 1}| \ge \de \quad \text{for all $l,s \ge 1$ and $k$}, \quad u = 1, 2.
$$

Consequently, we obtain the following estimates
analogously to Lemmas~\ref{lem:eta} and~\ref{lem:estvv}:
\begin{gather*}
|\tilde \vv_{l,k,0}^{(j)}(x)| \le C l^j w_{l,k}(x), \quad
|\tilde \vv_{l,k,0,(1)}^{(j)}(x) - \tilde \vv_{l,k,0,(2)}^{(j)}(x)| \le C l^j w_{l,k}(x) \zeta_l, \\
|\tilde \eta_{l,k,0}^{(j)}(x)| \le C l^j w_{l,k}^{-1}(x), \quad
|\tilde \eta_{l,k,0,(1)}^{(j)}(x) -  \tilde \eta_{l,k,0,(2)}^{(j)}(x)| \le C l^j w_{l,k}^{-1}(x) \zeta_l,
\end{gather*}
where $(l,k) \in J$, $j = \overline{0,n-1}$, $x \in [0,1]$, $w_{l,k}(x)$ is defined by \eqref{defw}, $C = C(\Omega,K,\de)$, and 
\begin{equation} \label{defzeta}
\zeta_l := \sum_{k = 1}^{n-1} \Bigl( l^{-(n-1)} |\la_{l,k,(1)} - \la_{l,k,(2)}| + l^{-n} |\be_{l,k,(1)} - \be_{l,k,(2)}|\Bigr).
\end{equation}
Applying these estimates to \eqref{diftT}, we get
\begin{equation} \label{diftT2}
\max_{x \in [0,1]} \bigl| \tilde T_{j_1,j_2,(1)} - \tilde T_{j_1,j_2,(2)}\bigr| \le C(\Omega,K,\de) \sum_{l =1}^{\infty} l^{j_1 + j_2} \zeta_l.
\end{equation}

Recall that $j_1 + j_2 = n - s - 1 \le n - 2$. The definitions \eqref{defZ} and \eqref{defzeta} imply 
\begin{equation} \label{sumzeta}
\sum_{l= 1}^{\infty} l^{n-2} \zeta_l \le C Z.
\end{equation}

Combining \eqref{diftT2} and \eqref{sumzeta}, we deduce
\begin{equation} \label{tTL2}
\bigl\| \tilde T_{j_1,j_2,(1)} - \tilde T_{j_1,j_2,(2)}\bigr\|_{L_2[0,1]} \le C(\Omega,K,\de) Z.
\end{equation}

Next, we need the following lemma.

\begin{lem} \label{lem:reg}
Suppose that $j_1, j_2 \ge 0$ and $j_1 + j_2 = n-1$. Then, there exist constants $\{ a_{l,k,j_1,j_2} \}_{l,k \in J}$ such that the series
$$
\tilde T_{j_1,j_2}^{reg}(x) := \sum_{(l,k) \in J} \sum_{(l,k) \in J} \bigl( \tilde \vv_{l,k,0}^{(j_1)}(x)\tilde \eta_{l,k,0}^{(j_2)}(x) - \tilde \vv_{l,k,1}^{(j_1)}(x) \tilde \eta_{l,k,1}^{(j_2)}(x) - a_{l,k,j_1,j_2}\bigr)
$$
converges in $L_2[0,1]$ and 
$$
\| \tilde T_{j_1,j_2}^{reg} \|_{L_2[0,1]} \le C(\Omega,K,\de) Z.
$$
Moreover, the constants can be chosen so that
\begin{equation} \label{rela}
a_{l,k,j_1,j_2} = -a_{l,k,j_1-1,j_2+1}.
\end{equation}
\end{lem}

Lemma~\ref{lem:reg} is proved similarly to \cite[Lemma~8]{Bond22}, so we omit the details. In short, the Weyl solutions $\tilde \Phi_{k+1}(x,\la)$ and $\tilde \Phi^{\star}_{n-k+1}(x,\la)$ are expanded by the Birkhoff solutions and the asymptotics of Proposition~\ref{prop:Birk} are applied. The proof of \cite[Lemma~8]{Bond22} provides certain formulas for the constants $a_{l,k,j_1,j_2}$. However, for our purposes, the relation \eqref{rela} is sufficient.

Continue the proof of Theorem~\ref{thm:unistab2}.
For definiteness, consider the case $j_1 + j_2 = n-2$, $s = 1$, $T_{j_1,j_2} \in W_2^1[0,1]$. Consider the derivative
$$
\tilde T_{j_1,j_2}'(x) = \tilde T_{j_1+1,j_2}(x) + \tilde T_{j_1,j_2+1}(x).
$$

By Lemma~\ref{lem:reg}, the series $\bigl( \tilde T_{j_1+1,j_2,(1)}(x) - \tilde T_{j_1+1,j_2,(2)}(x)\bigr)$ and $\bigl(\tilde T_{j_1,j_2+1,(1)}(x) - \tilde T_{j_1,j_2+1,(2)}(x)\bigr)$ converge in $L_2[0,1]$ with regularizing constants $a_{l,k,j_1+1,j_2}$ and $a_{l,k,j_1,j_2+1}$, respectively, which satisfy $a_{l,k,j_1+1,j_2} + a_{l,k,j_1,j_2+1}=0$. Hence, the sum $\bigl(\tilde T_{j_1,j_2,(1)}'(x) - \tilde T_{j_1,j_2,(2)}'(x)\bigr)$ converges in $L_2[0,1]$ without regularization and
\begin{equation} \label{diftT3}
\bigl\| \tilde T_{j_1,j_2,(1)}' - \tilde T_{j_1,j_2,(2)}' \bigr\|_{L_2[0,1]} \le C(\Omega,K,\de) Z.
\end{equation}

Combining \eqref{tTL2} and \eqref{diftT3}, we get
\begin{equation} \label{esttT}
\| \tilde T_{j_1,j_2,(1)} - \tilde T_{j_1,j_2,(2)} \|_{W_2^s[0,1]} \le C(\Omega,K,\de) Z
\end{equation}
for $s = 1$. The proof for $s \in \{ 2, 3, \dots, n-1 \}$ is analogous.

Next, one can obtain the similar estimate
\begin{equation} \label{esthT}
\| \hat T_{j_1,j_2,(1)} - \hat T_{j_1,j_2,(2)} \|_{W_2^s[0,1]} \le C(\Omega,K,\de) Z
\end{equation}
under the conditions \eqref{ind}. A detailed derivation of \eqref{esthT} for $n = 2$ is presented in \cite{Bond25}. 

The estimates \eqref{esttT} and \eqref{esthT} together imply \eqref{difT}, which concludes the proof.
\end{proof}

Thus, Theorem~\ref{thm:unistab2} establishes the uniform stability of Inverse Problem~\ref{ip:main} on the set $\mathcal S_{\Omega,K,\de}$ of spectral data. Let us discuss the constraints \eqref{bbound} and \eqref{boundK} forming this set. The condition \eqref{boundK} assumes the existence of the inverse operator $(I - \tilde R(x))^{-1}$ and its uniform bound. In the non-self-adjoint case, conditions of this kind are essential. Even for the second-order Sturm-Liouville operator with complex-valued potential, there are unknown necessary and sufficient conditions on spectral data that guarantee 
the unique solvability 
of the main equation \eqref{maineq}. 
Therefore, the known characterization theorems for the non-self-adjoint case (see, e.g., \cite[Theorem~1.6.3]{FY01}) require the existence of the bounded inverse operator $(I - \tilde R(x))^{-1}$. Moreover, the uniform stability of the inverse Sturm-Liouville problem in the non-self-adjoint case has been obtained in \cite{Bond25} under the uniform bound \eqref{boundK} for the inverse operator. 

Note that the condition \eqref{boundK} is related to a fixed model vector $\tilde \tau$. Furthermore, for $n > 2$, we have to impose the requirement \eqref{bbound} of separation from the eigenvalues of the model vector. 
In fact, for every $\tau \in \mathbf{W}_{simp}$ there exists a model vector satisfying this requirement. However, in Definition~\ref{def:SOe}, the model vector $\tilde \tau$ is fixed, since it is used in construction of the operator $\tilde R(x)$ in the constraint \eqref{boundK}. For the self-adjoint case, we will show in the next sections that the requirement \eqref{boundK} can be omitted, and so \eqref{bbound} can also be removed. But in the non-self-adjoint case the both constraints \eqref{bbound} and \eqref{boundK} are needed.

\section{Self-adjoint case} \label{sec:sa}

This section aims to prove Theorem~\ref{thm:uni} basing on Theorem~\ref{thm:unistab2}.

Suppose that $\Omega, \de > 0$ and $\tilde \tau \in \mathbf{W}_{simp}^+$. Let $\{ \la_{l,k}, \be_{l,k} \}_J$ be any sequence of the set $\mathcal S_{\Omega,\de}^+(\tilde \tau)$ introduced by Definition~\ref{def:SOe+}. 
Under the condition \eqref{bbound}, the operator $\tilde R(x)$ is correctly defined. Moreover, each sequence $\{ \la_{l,k}, \be_{l,k} \}_J \in \mathcal S_{\Omega,\de}^+ \subset \mathcal S^+$ is the spectral data of some $\tau \in \mathbf{W}_{simp}^+$ by Proposition~\ref{prop:sc}. Therefore, there exists a bounded inverse operator $(I - \tilde R(x))^{-1}$ on $m$ for each fixed $x \in [0,1]$. The following lemma shows that these operators are uniformly bounded on $\mathcal S_{\Omega,\de}^+$.

\begin{lem} \label{lem:comp}
For every sequence $\{ \la_{l,k}, \be_{l,k} \}_J$ lying in $\mathcal S^+_{\Omega,\de}$ and satisfying \eqref{bbound}, the corresponding operator $(I - \tilde R(x))^{-1}$ satisfies \eqref{boundK}, where $K = K(\Omega,\de)$.
\end{lem}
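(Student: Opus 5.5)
We argue by contradiction, combining a compactness argument with the fact, taken from Proposition~\ref{prop:sc} and \cite[Theorem~1]{Bond22}, that $(I-\tilde R(x))$ is invertible for every admissible sequence. Suppose the claim fails. Then there are sequences $\{\la_{l,k,(u)},\be_{l,k,(u)}\}_J \in \mathcal S^+_{\Omega,\de}$, $u \ge 1$, each satisfying \eqref{bbound}, and points $x_u \in [0,1]$ such that $\|(I-\tilde R_{(u)}(x_u))^{-1}\|_{m\to m} \to \infty$. Equivalently, there are $a_{(u)} \in m$ with $\|a_{(u)}\|_m = 1$ and $\|(I-\tilde R_{(u)}(x_u))a_{(u)}\|_m \to 0$.

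The first step is to extract a convergent subsequence. We pass to a subsequence along which $x_u \to x^*$ and, for each fixed $(l,k)$, $\la_{l,k,(u)} \to \la^*_{l,k}$ and $\be_{l,k,(u)} \to \be^*_{l,k}$. This is possible because $\Xi_{(u)} \le \Omega$ keeps each entry bounded. Fatou's lemma gives $\Xi^* \le \Omega$. The closed conditions \eqref{sa}, \eqref{bbound1} and \eqref{bbound} pass to the limit. The strict inequalities \eqref{addhyp1} and \eqref{addhyp2} survive as well, because they are backed by the uniform bounds $|\be_{l,k}| \ge \de$ and $|\mbox{Re}\,\la_{l,p}| \ge \de$. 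Separation by $\de$ preserves (A-1) and (A-2), and $\be^*_{l,k} \ne 0$. Hence the limit lies in $\mathcal S^+_{\Omega,\de}$ and satisfies \eqref{bbound}, so $(I-\tilde R^*(x^*))$ is invertible.

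The second step upgrades entrywise convergence to norm convergence $\|\tilde R_{(u)}(x_u) - \tilde R^*(x^*)\|_{m\to m} \to 0$, again along a subsequence. For each fixed pair $(v_0,v)$, the entry $\tilde R_{v_0,v}(x)$ depends continuously on $x$ and on finitely many spectral values. The modification \eqref{defpsi2}, \eqref{defR2} from Remark~\ref{rem:modif} is exactly what ensures this continuity, including at points where $\la_{l,k}=\tilde\la_{l,k}$. The tails are handled by \eqref{estpsiR}, which gives
\[
\sup_{v_0}\sum_{l>N} |\tilde R_{v_0,v}| \le C\Bigl(\sum_{l>N}\xi_l^2\Bigr)^{1/2}.
\]
This alone is not uniformly small, since only $\Xi \le \Omega$ is known. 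The fix is to first pass to a subsequence along which $\{l^{n-1}\xi_{l,(u)}\}$ converges strongly in $l_2$; for instance, we can use the finer splitting below. If strong convergence is unavailable, we instead split $\tilde R = \tilde R^{\le N} + \tilde R^{>N}$. The part $\tilde R^{>N}$ carries the factor $l^{1-n}$ hidden in $\xi_l$, which gives smallness $N^{1-n}\Omega$ when $n>1$. This is the case $n \ge 2$, since $\xi_l \le l^{1-n}\cdot(l^{n-1}\xi_l)$ and $\sup_{v_0}\sum_{l>N}\frac{l^{1-n}\,l^{n-1}\xi_l}{|l-l_0|+1} \le C N^{1-n}\Omega$ by Cauchy--Schwarz. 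With the tail uniformly small, the finite block converges by entrywise continuity, uniformly in $v_0$ because the rows also decay in $l_0$.

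Given norm convergence, the argument closes: $(I-\tilde R_{(u)}(x_u)) \to (I-\tilde R^*(x^*))$ in operator norm. Since the limit is invertible, a Neumann series shows that $\|(I-\tilde R_{(u)}(x_u))^{-1}\|$ is eventually bounded, which contradicts the assumption. We expect the main obstacle to be the uniformity of the tail estimate, together with continuity of the entries at coalescing eigenvalues $\la_{l,k}\to\tilde\la_{l,k}$. This requires the derivative form of \eqref{defR2} and the asymptotics of Lemma~\ref{lem:estvv} to hold with constants depending only on $(\Omega,\de)$.
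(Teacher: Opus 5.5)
Your proof is correct and is essentially the paper's argument: contradiction, a diagonal subsequence with entrywise convergence, invertibility of the limit operator, and norm convergence $\tilde R_{(u)} \to \tilde R^*$ via a finite block plus tails. The tails are controlled by $\xi_l \le \Omega l^{1-n}$, which already makes them uniformly small, so your detour about extracting a subsequence that converges strongly in $l_2$ is unnecessary (and in general not available). Your two variations are valid: using Fatou's lemma to get $\Xi^* \le \Omega$, so that the limit stays in $\mathcal S^+_{\Omega,\de}$ and Proposition~\ref{prop:sc} applies directly, is slightly cleaner than the paper's appeal to the lower-smoothness result of \cite{Bond24}; and taking points $x_u \to x^*$ replaces the paper's uniform-in-$x$ convergence combined with \eqref{invR}.
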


\begin{proof}
The lemma is proved by contradiction.
Suppose that there exists a sequence $\{ \la_{l,k,(u)}, \be_{l,k,(u)} \}_J$ $(u \ge 1)$ such that
\begin{equation} \label{contr}
\lim_{u \to \infty} \sup_{0 \le x \le 1} \| (I - \tilde R_{(u)}(x))^{-1} \| = \infty.
\end{equation}
Here and below in this proof, we denote $\| . \| = \| . \|_{m \to m}$.

\smallskip

\textit{Step 1. Passing to the limit}. The data $\la_{l,k,(u)}$ and $\be_{l,k,(u)}$ satisfy the asymptotics \eqref{asymptla} and \eqref{asymptbe}, respectively, where the constants $c_{j,k}$ and $d_{j,k}$ are fixed and the remainder terms satisfy
$$
|\varkappa_{l,k,(u)}| \le |\tilde \varkappa_{l,k}| + \Omega, \quad
|\eta_{l,k,(u)}| \le |\tilde \eta_{l,k}| + C \Omega, \quad (l,k) \in J, \: u \ge 1,
$$
by virtue of \eqref{defXi}. Therefore, we can pass to a subsequence
$\{ \la_{l,k,(u)}, \be_{l,k,(u)} \}_J$ that element-wise converges to a limit $\{ \la_{l,k}, \be_{l,k} \}_J$ as $u \to \infty$. In other words, for each fixed $(l,k) \in J$, there hold
\begin{equation} \label{weaklim}
\lim_{u \to \infty} \la_{l,k,(u)} = \la_{l,k}, \quad \lim_{u \to \infty} \be_{l,k,(u)} = \be_{l,k}.
\end{equation}

The limiting values have the asymptotics
\begin{align*} 
\la_{l,k} & = (-1)^{n-k} \bigl(c_{0,k} l^n + c_{1,k} l^{n-1} + c_{2,k} l^{n-2} + \dots + c_{n-1,k} l + O(1) \bigr), \\
\be_{l,k} & = -n \la_{l,k} \bigl( 1 + d_{1,k} l^{-1} + \dots + d_{n-2,k} l^{-(n-2)} + O(l^{-(n-1)}) \bigr), \quad (l,k) \in J.
\end{align*}

Since the data $\{ \la_{l,k,(u)}, \be_{l,k,(u)} \}_J$ for each fixed $u \ge 1$ fulfill the conditions \eqref{sa}, \eqref{addhyp1}, \eqref{addhyp2}, \eqref{bbound1}, and \eqref{bbound}, so do $\{ \la_{l,k}, \be_{l,k} \}_J$. In particular, bounds \eqref{bbound1} imply (A-1) and (A-2). Consequently, by virtue of \cite[Theorem~1]{Bond24}, the values $\{ \la_{l,k}, \be_{l,k} \}_J$ are the spectral data of some vector $\tau = \{ \tau_{\nu} \}_{\nu = 0}^{n-2}$, $\tau_{\nu} \in W_2^{\nu-1}[0,1]$ for $\nu = \overline{0,n-2}$. Note that the functions $\tau_{\nu}$ may have lower smoothness than $\tau_{\nu,(u)}$. Anyway, the bounds \eqref{bbound} imply \eqref{assump}, so one can construct the operator $\tilde R(x)$ as in Section~\ref{sec:maineq}, using the spectral data $\{ \la_{l,k}, \be_{l,k} \}_J$ and the model vector $\tilde \tau$. Moreover, there exists an inverse operator $(I - \tilde R(x))^{-1}$ on $m$. This operator is uniformly bounded with respect to $x \in [0,1]$: 
\begin{equation} \label{invR}
\sup_{0 \le x \le 1} \| (I - \tilde R(x))^{-1} \|  < \infty.
\end{equation}
since $\tilde R(x)$ is continuous by $x$ (see formulas \eqref{defR1} and \eqref{defR2}). 

\smallskip

\textit{Step 2. Operator sequence}. Let us prove that the sequence $\{ \tilde R_{(u)}(x) \}_1^{\infty}$ converges to $\tilde R(x)$ in the operator norm $\|.\|_{m \to m}$ uniformly by $x \in [0,1]$. 
Denote $V_N := \{ v = (l,k,\eps) \in V \colon l \le N \}$ for $N \ge 1$.
According to \eqref{opR} and \eqref{estop}, we have
\begin{align} \nonumber
\| \tilde R_{(u)}(x) & - \tilde R(x) \| = \sup_{v_0 \in V} \sum_{v \in V} |\tilde R_{v_0,v,(u)}(x) - \tilde R_{v_0,v}(x)| \\ \nonumber & \le \sup_{v_0 \in V} \left( \sum_{v \in V_N} |\tilde R_{v_0,v,(u)}(x) - \tilde R_{v_0,v}(x)| + \sum_{v \in V \setminus V_N} |\tilde R_{v_0,v,(u)}(x)| + \sum_{v \in V \setminus V_N} |\tilde R_{v_0,v}(x)|\right) \\ \label{difRu} & \le
\max \bigl\{ \mathscr M_{N,1}(x), \mathscr M_{N,2}(x)\bigr\} + \mathscr M_{N,3}(x),
\end{align}
where
\begin{align*}
& \mathscr M_{N,1}(x) := \max_{v_0 \in V_{2N}} \sum_{v \in V_N} |\tilde R_{v_0,v,(u)}(x) - \tilde R_{v_0,v}(x)|, \\
& \mathscr M_{N,2}(x) := \sup_{v_0 \in V \setminus V_{2N}} \left( \sum_{v \in V_N} |\tilde R_{v_0,v,(u)}(x)| + \sum_{v \in V_N} |\tilde R_{v_0, v}(x)|\right), \\
& \mathscr M_{N,3}(x) := \sup_{v_0 \in V} \left( \sum_{v \in V \setminus V_N} |\tilde R_{v_0, v,(u)}(x)| + \sum_{v \in V \setminus V_N} |\tilde R_{v_0,v}(x)|\right).
\end{align*}

Due to \eqref{estpsiR}, there holds
\begin{equation*} 
|\tilde R_{v_0, v, (u)}(x)| \le \frac{C\xi_{l,(u)}}{|l-l_0|+1}, \quad 
|\tilde R_{v_0,v}(x)| \le \frac{C \xi_l}{|l-l_0|+1},
\end{equation*}
where $v_0 = (l_0,k_0,\eps_0)$, $v = (l,k,\eps)$, $u \ge 1$, $x \in [0,1]$, and $C = C(\Omega,\de)$ in our case. Moreover, it follows from \eqref{Xixi} that $\xi_{l,(u)} \le \Omega l^{-1}$ for all $l, u \ge 1$, and so $\xi_l \le \Omega l^{-1}$.
Hence
\begin{align*}
\mathscr M_{N,2}(x) & \le C(\Omega,\de) \sup_{l_0 > 2N} \sum_{l = 1}^N \frac{1}{l (|l-l_0|+1)} \le C(\Omega,\de) \left( \sum_{l = N+2}^{2N+1} \frac{1}{l^2} \right)^{1/2} \le \frac{C(\Omega,\de)}{\sqrt N}, \\
\mathscr M_{N,3}(x) & \le C(\Omega,\de) \sup_{l_0 \ge 1} \sum_{l = N+1}^{\infty} \frac{1}{l(|l-l_0|+1)} \le C(\Omega,\de) \left(\sum_{l = N+1}^{\infty} \frac{1}{l^2}\right)^{1/2} \le \frac{C(\Omega,\de)}{\sqrt N}.
\end{align*}

Therefore, for each $\epsilon > 0$, one can choose so large $N$ that $\mathscr M_{N,2}(x) \le \frac{\epsilon}{2}$ and $\mathscr M_{N,3}(x) \le \frac{\epsilon}{2}$. Note that $\mathscr M_{N,1}(x)$ depends only on a finite number of terms $|\tilde R_{v_0,v,(u)}(x) - \tilde R_{v_0,v}(x)|$ for the fixed $N$. Taking \eqref{defR1}, \eqref{defR2}, and \eqref{weaklim} into account, we conclude that $|\tilde R_{v_0,v,(u)}(x) - \tilde R_{v_0,v}(x)| \to 0$ as $u \to \infty$ for fixed $v_0, v$, and so $\mathscr M_{N,1}(x) \to 0$ as $u \to \infty$ for the fixed $N$ uniformly by $x \in [0,1]$. At this step, the continuity of $\tilde R_{v_0,v}(x)$ with respect to $\la_{l,k}$ and $\la_{l_0,k_0}$ is essential, as pointed out in Remark~\ref{rem:modif}.
Thus, one can choose $u_0$ such that $\mathscr M_{N,1}(x) \le \frac{\epsilon}{2}$ for all $u \ge u_0$. Returning to \eqref{difRu}, we conclude that $\| \tilde R_{(u)}(x) - \tilde R(x) \| \to 0$ as $u \to \infty$ uniformly by $x \in [0,1]$. This together with \eqref{invR} contradict to \eqref{contr} and so prove the lemma.
\end{proof}

\begin{remark} \label{rem:sep}
Without loss of generality, one may assume that, for any $\{ \la_{l,k}, \be_{l,k} \}_J$ in $\mathcal S_{\Omega,\de}^+$, the condition \eqref{bbound} holds. Indeed, in view of \eqref{defXi}, the eigenvalues $\{ \la_{l,k} \}_J$ satisfy the asymptotics \eqref{asymptla}, where the coefficients $c_{j,k}$ coincide with the corresponding coefficients $\tilde c_{j,k}$ of the model vector $\tilde \tau$ for $j = \overline{1,n}$ and $\| \{ \varkappa_{l,k} - \tilde \varkappa_{l,k} \} \|_{l_2} \le \Omega$. Consequently, the bounds \eqref{bbound} hold when $l \ge l_*$ or $s \ge l_*$, where $l_*$ depends only on $\tilde \tau$ and $\Omega$. At the same time, the eigenvalues $\la_{l,k}$ for $l < l_*$ lie inside a circle $|\la| \le r$ of radius $r = r(\tilde \tau,\Omega)$. Consequently, relying on Proposition~\ref{prop:sc}, ne can choose another model vector $\tilde{\tilde \tau} \in \mathbf{W}_{simp}^+$ such that
$|\la_{l,k} - \tilde{\tilde\la}_{s,k\pm1}| \ge \de$ for all $l,s\ge1$ and $k$. Indeed, one can put $\tilde{\tilde \la}_{l,k} = \tilde\la_{l,k}$ and $\tilde{\tilde \be}_{l,k} = \tilde \be_{l,k}$ for $l \ge l_*$, choose $\tilde{\tilde \la}_{l,k}$ outside the circle $|\la|\le r$ separated from the other values $\la_{s,k}$, $s \ge l_*$, and choose suitable $\tilde{\tilde \be}_{l,k} \ne 0$ so that the assumptions (A-1), (A-2), \eqref{sa}, \eqref{addhyp1}, and \eqref{addhyp2} are satisfied for $\{ \tilde{\tilde \la}_{l,k}, \tilde{\tilde \be}_{l,k} \}_J$. Then, the corresponding model vector $\tilde{\tilde \tau}$ can be used instead of $\tilde \tau$ for constructing the operator $\tilde R(x)$ as described in Section~\ref{sec:maineq}. We stress that a single model vector $\tilde{\tilde \tau}$ can be chosen for all sequences $\{ \la_{l,k}, \be_{l,k} \}_J \in \mathcal S^+_{\Omega,\de}(\tilde \tau)$. 
\end{remark}

\begin{proof}[Proof of Theorem~\ref{thm:uni}]
According to Lemma~\ref{lem:comp} and Remark~\ref{rem:sep}, there holds $\mathcal S_{\Omega,\de}^+(\tilde \tau) \subset \mathcal S_{\Omega_1,K,\de}(\tilde{\tilde \tau})$ for a suitable $\tilde{\tilde \tau} \in \mathbf{W}^+_{simp}$ and some constants $\Omega_1 = \Omega_1(\Omega,\de)$, $K = K(\Omega,\de)$. Therefore, the conclusion of Theorem~\ref{thm:uni} follows from Theorem~\ref{thm:unistab2}.
\end{proof}

\section{Reconstruction by $(2n-2)$ spectra} \label{sec:sp}

In this section, we study Inverse Problem~\ref{ip:sp} by the eigenvalue set $\{ \la_{l,k}, \mu_{l,k} \}_J$. Following the approach of \cite[Section~2.5.3]{Yur02}, we construct the weight numbers $\{ \be_{l,k} \}_J$ and so reduce Inverse Problem~\ref{ip:sp} to Inverse Problem~\ref{ip:main} by the spectral data. Next, we investigate the properties of the eigenvalues $\{ \mu_{l,k} \}_J$ and prove Theorems~\ref{thm:scsp} and \ref{thm:unisp} basing on the corresponding results for Inverse Problem~\ref{ip:main}.

For $k = \overline{1,n}$, let $\mathcal C_k(x,\la)$ be the solution of equation \eqref{eqv} under the initial conditions
$$
\mathcal C_k^{(j-1)}(0,\la) = \de_{k,j}, \quad j = \overline{1,n}.
$$
Clearly, the functions $\mathcal C_k^{(j-1)}(x,\la)$ are entire in $\la$ for each fixed $x \in [0,1]$, $k,j = \overline{1,n}$. 

The functions $\{ \mathcal C_k(x, \la) \}_{k = 1}^n$ and $\{ \Phi_k(x, \la) \}_{k = 1}^n$ form fundamental systems of solutions of equation \eqref{eqp}. Therefore, the following relations hold:
$$
\Phi_k(x,\la) = \sum_{j = 1}^n m_{j,k}(\la) \mathcal C_j(x, \la), \quad k = \overline{1,n},
$$
where the matrix of the coefficients $M(\la) = [m_{j,k}(\la)]_{j,k = 1}^n$ is called \textit{the Weyl-Yurko matrix}.

The elements of the Weyl-Yurko matrix satisfy the following relations (see \cite[Theorem~2.1.1]{Yur02}):
\begin{equation} \label{relmjk}
m_{j,k}(\la) = \de_{j,k}, \quad j \le k, \qquad
m_{j,k}(\la) = -\frac{\Delta_{j,k}(\la)}{\Delta_{k,k}(\la)}, \quad j > k,
\end{equation}
where 
\begin{equation} \label{defDelta}
\Delta_{k,k}(\la) := \det([\mathcal C_j^{(n-s)}(1,\la)]_{s,j = k+1}^n), \quad k = \overline{1,n-1}, 
\end{equation}
and $\Delta_{j,k}(\la)$ is obtained from $\Delta_{k,k}(\la)$ by replacing $\mathcal C_j$ by $\mathcal C_k$. Obviously, the functions $\Delta_{j,k}(\la)$ ($1 \le k \le j \le n$) are entire in $\la$. 

For each $k \in \{ 1, 2, \dots, n-1 \}$, the eigenvalues $\{ \la_{l,k} \}_{l \ge 1}$ and $\{ \mu_{l,k} \}_{l \ge 1}$ of the corresponding boundary value problems $\mathcal L_k$ \eqref{eqv}, \eqref{bc} and $\mathcal M_k$ \eqref{eqv}, \eqref{bcmu} coincide with the zeros of the functions $\Delta_{k,k}(\la)$ and $\Delta_{k+1,k}(\la)$, respectively (counting with multiplicities).
In view of \eqref{defbe} and \eqref{relmjk}, the functions $M_k(\la) = m_{k,k}(\la)$ are meromorphic in $\la$, their poles belong to the set $\{ \la_{l,k} \}_{l \ge 1}$, and
$$
\be_{l,k} = -\Res_{\la = \la_{l,k}} \frac{\Delta_{k+1,k}(\la)}{\Delta_{k,k}(\la)}, \quad (l,k) \in J.
$$

Under the assumptions (A-1) and (A-2), we have
\begin{equation} \label{beDe}
\be_{l,k} = -\frac{\Delta_{k+1,k}(\la_{l,k})}{\dot\Delta_{k,k}(\la_{l,k})} \ne 0, \quad (l,k) \in J,
\end{equation}
where $\dot \Delta(\la) = \tfrac{d}{d\la}\Delta(\la)$.

The characteristic functions can be constructed as infinite products by using their zeros (see \cite[Theorem~1.1.4]{FY01}):
\begin{align} \label{prod1}
\Delta_{k,k}(\la) & = \Delta_{k,k}^0(0) \prod_{s = 1}^{\infty} \frac{\la_{s,k} - \la}{\la_{s,k}^0}, \\ \label{prod2} \Delta_{k+1,k}(\la) & = \Delta_{k+1,k}^0(0) \prod_{s = 1}^{\infty} \frac{\mu_{s,k} - \la}{\mu_{s,k}^0},
\end{align}
where the values $\Delta_{k,k}^0(0) \ne 0$, $\Delta_{k+1,k}^0(0) \ne 0$, $\la_{s,k}^0$, and $\mu_{s,k}^0$ correspond to the zero vector $\tau$. 

Using \eqref{prod1}, \eqref{prod2}, and \eqref{beDe}, Inverse Problem~\ref{ip:sp} by the eigenvalue set $\{ \la_{l,k}, \mu_{l,k}\}_J$ is reduced to Inverse Problem~\ref{ip:main} by the spectral data $\{ \la_{l,k}, \be_{l,k}\}_J$.

Proceed to proving the main results for Inverse Problem~\ref{ip:sp}.

\begin{proof}[Proof of Lemma~\ref{lem:nscsp}]
It follows from \eqref{beDe} that $\Delta_{k+1,k}(\la_{l,k}) \ne 0$, which readily implies \eqref{sepsp}. For even $n$, the problems $\mathcal L_k$ and $\mathcal M_k$ are adjoint to $\mathcal L_{n-k}$ and $\mathcal M_{n-k}$, respectively. For odd $n$, the similar relations hold for the boundary value problems for equation $i\ell_n(y) = \lambda y$. This implies \eqref{sasp}. 

Proceed to proving the interlacing property~\eqref{inter}. Let $n = 2p$. Then, the problems $\mathcal L_p$ and $\mathcal M_p$ are self-adjoint, so their eigenvalues are real. For definiteness, let $p$ be even. Then $\lambda_{l,p}$ and $\mu_{l,p}$ tend to $+\infty$ as $l \to \infty$, so without loss of generality we assume that $\la_{l,p} < \la_{l+1,p}$ and $\mu_{l,p} \le \mu_{l+1,p}$. 

Introduce the real-valued functionals
\begin{align*}
\mathcal J[y] & := \int_0^1 \left( |y^{(p)}|^2 + \sum_{k = 0}^{p-1} (-1)^k \tau_{2k} |y^{(k)}|^2 + 2 i \sum_{k = 0}^{p-2} (-1)^k \tau_{2k+1} \mbox{Im}\bigl(\overline{y}^{(k)} y^{(k+1)}\bigr)\right)\, dx, \\
\mathcal H[y] & := \int_0^1 |y|^2 dx
\end{align*}
and the domains
\begin{align*}
V_{\la} & := \bigl\{ y \in W_2^p[0,1] \colon y^{(j)}(0) = y^{(j)}(1) = 0, \, j = \overline{0,p-1}\}, \\
V_{\mu} & := \bigl\{ y \in W_2^p[0,1] \colon y^{(j)}(0) = 0, \, j = \overline{0,p-2}, \, y^{(j)}(1) = 0, \, j = \overline{0,p-1}\}.
\end{align*}

According to the minimum property \cite[Chap.~VI, \S~1]{CH89}, 
the eigenvalues of the problems $\mathcal L_p$ and $\mathcal M_p$ solve the corresponding variational problems:
\begin{equation*}
\la_{l,p} = \min_{\substack{y \in V_{\la} \colon y \perp y_s, \\ s = \overline{1,l-1}}} \frac{\mathcal J[y]}{\mathcal H[y]}, \quad
\mu_{l,p} = \min_{\substack{y \in V_{\mu} \colon y \perp y_s, \\ s = \overline{1,l-1}}} \frac{\mathcal J[y]}{\mathcal H[y]},
\end{equation*}
where $\{ y_s \}_{s \ge 1}$ and $\{ z_s \}_{s \ge 1}$ are the eigenfunctions of $\mathcal L_p$ and $\mathcal M_p$, respectively. Furthermore, due to the maximum-minimun property of eigenvalues \cite[Chap.~VI, \S~1]{CH89}, there holds
$$
\la_{l,p} = \max_{\{ \mathscr L_s\}_1^{l-1}} \min_{\substack{y \in V_{\la} \colon \mathscr L_s(y) = 0, \\ s = \overline{1,l-1}}} \frac{\mathcal J[y]}{\mathcal H[y]}, \quad 
\mu_{l,p} = \max_{\{ \mathscr L_s\}_1^{l-1}} \min_{\substack{y \in V_{\mu} \colon \mathscr L_s(y) = 0, \\ s = \overline{1,l-1}}} \frac{\mathcal J[y]}{\mathcal H[y]},
$$
where the maximum is taken over all sets of $(l-1)$ continuous linear functionals $\mathscr L_s$ on $W_2^p[0,1]$. Since $V_{\la} \subset V_{\mu}$, we immediately get $\mu_{l,p} \le \la_{l,p}$. On the other hand, there holds
$$
\la_{l,p} = \max_{\{ \mathscr L_s\}_1^{l-1}} \min_{\substack{y \in V_{\mu} \colon y^{(p-1)}(0) = 0, \\ \mathscr L_s(y) = 0, \, s = \overline{1,l-1}}} \frac{\mathcal J[y]}{\mathcal H[y]} \le
\max_{\{ \mathscr L_s\}_1^l} \min_{\substack{y \in V_{\mu} \colon \mathscr L_s(y) = 0, \\ s = \overline{1,l}}} \frac{\mathcal J[y]}{\mathcal H[y]} = \mu_{l,p+1},
$$
where for $\la_{l,p}$ we have the particular linear functional $\mathscr L_l(y) := y^{(p-1)}(0)$. Thus $\mu_{l,p} \le \la_{l,p} \le \mu_{l+1,p}$, which together with \eqref{sepsp} imply \eqref{inter} for even $p$. The case of odd $p$ is studied analogously.
\end{proof}

\begin{proof}[Proof of Theorems~\ref{thm:scsp} and~\ref{thm:unisp}]
Suppose that $\tilde \tau \in \mathbf{W}_{simp}^+$ and $\{ \la_{l,k}, \mu_{l,k}\}_J \in \mathcal E^+(\tilde \tau)$. Let the functions $\Delta_{k,k}(\la)$, $\Delta_{k+1,k}(\lambda)$ and the numbers $\{ \be_{l,k} \}_J$ be constructed by using \eqref{prod1}, \eqref{prod2}, and \eqref{beDe}, respectively. In order to prove Theorem~\ref{thm:scsp}, it is sufficient to show that $\{ \la_{l,k}, \be_{l,k} \}_J \in \mathcal S^+(\tilde \tau)$. For proving Theorem~\ref{thm:unisp}, we have to show additionally that, if $\{ \la_{l,k}, \mu_{l,k} \}_J \in \mathcal E^+_{\Omega,\de}$, then $\{ \la_{l,k}, \be_{l,k}\}_J \in \mathcal S^+_{\Omega_1, \eps_1}$, where $\Omega_1$ and $\eps_1$ depend only on $\Omega$ and $\eps$, and $Z \le C(\Omega,\de) X$. Then, applying Proposition~\ref{prop:sc} and Theorem~\ref{thm:uni}, one easily concludes the proof.

The most nontrivial part of the proof is to show that 
$$
\sum_{l = 1}^{\infty} \Bigg(\sum_{k = 1}^{n-1} l^{-1}|\be_{l,k} - \tilde \be_{l,k}|\Bigg)^2 \le C(\Omega,\de).
$$

Throughout this proof, we denote by $\{ \varkappa_s \}$ various $l_2$-sequences, whose norms are bounded by $C(\Omega,\de)$ in the case $\{\la_{l,k}, \mu_{l,k}\}_J \in \mathcal E^+_{\Omega,\de}$.

Using \eqref{prod2}, we obtain
\begin{equation} \label{Dkk1}
\Delta_{k+1,k}(\la_{l,k}) = \tilde \Delta_{k+1,k}(\la_{l,k}) \prod_{s = 1}^{\infty} (1 + a_{s,l,k}), \quad a_{s,l,k} := \frac{\mu_{s,k} - \tilde \mu_{s,k}}{\tilde \mu_{s,k} - \la_{l,k}}.
\end{equation}
Recall that $\la_{l,k}$ satisfy \eqref{asymptla} and $\mu_{l,k}$, the similar asymptotics
$$
\mu_{l,k} = (-1)^{n-k} \bigl(f_{0,k} l^n + f_{1,k} l^{n-1} + f_{2,k} l^{n-2} + \dots + f_{l,k}  + \kappa_{l,k} \bigr), \quad \{ \kappa_{l,k}\} \in l_2,
$$
where $f_{0,k} = c_{0,k}$ \eqref{c0k} and $f_{1,k} \ne f_{0,k}$. Moreover, $\{ \ln (l + 1) (\kappa_{l,k} - \tilde \kappa_{l,k})\} \in l_2$ in view of \eqref{defTheta}. Consequently, we get
\begin{equation} \label{esta}
|a_{s,l,k}| \le \frac{\varkappa_l}{l^{n-1} \ln (l + 1) (|l-s|+1)}.
\end{equation}
In particular, for all $l \ge N$, $N = N(\Omega,\eps)$, there holds $|a_{s,l,k}| \le \tfrac{1}{2}$, so
$$
\left| \ln \prod_{s = 1}^{\infty} (1 + a_{s,l,k}) \right| \le \sum_{s = 1}^{\infty} |\ln(1 + a_{s,l,k})| \le C \sum_{s = 1}^{\infty} |a_{s,l,k}|.
$$
Applying \eqref{esta} and Lemma~\ref{lem:bound}, we conclude that the sequence $\{ z_l \}_{l \ge N}$, $z_l := l^{n-1} \sum\limits_{s = 1}^{\infty} |a_{s,l,k}|$, belongs to $l_2$ and $\| \{ z_l \}_{l \ge N} \|_{l_2} \le C(\Omega, \de)$. Consequently, we get
\begin{equation} \label{aprod}
\prod_{s = 1}^{\infty} (1 + a_{s,l,k}) = 1 + \frac{\varkappa_l}{l^{n-1}}, \quad l \ge N.
\end{equation}

Expanding the characteristic function $\tilde\Delta_{k+1,k}(\la)$ in terms of the Birkhoff solutions  (see Proposition~\ref{prop:Birk}) and using the asymptotics \eqref{asymptla} for $\la_{l,k}$ and $\tilde \la_{l,k}$, we obtain
\begin{equation} \label{aD1}
\tilde \Delta_{k+1,k}(\la_{l,k}) = \tilde \Delta_{k+1,k}(\tilde \la_{l,k}) \left( 1 + \frac{\varkappa_l}{l^{n-1}} \right), \quad l \ge N.
\end{equation}
Combining \eqref{Dkk1}, \eqref{aprod}, and \eqref{aD1}, we conclude that 
\begin{equation} \label{aD2}
\Delta_{k+1,k}(\la_{l,k}) = \tilde \Delta_{k+1,k}(\tilde \la_{l,k}) \left( 1 + \frac{\varkappa_l}{l^{n-1}} \right), \quad l \ge N.
\end{equation}
Furthermore, we derive the relation
\begin{equation} \label{aD3}
\dot{\Delta}_{k,k}(\la_{l,k}) = \dot{\tilde \Delta}_{k,k}(\tilde \la_{l,k}) \left( 1 + \frac{\varkappa_l}{l^{n-1}} \right), \quad l \ge N.
\end{equation}
For this purpose, we observe that $\{ \la_{l,k}, \tilde \be_{l,k}\}_J \in \mathcal S^+$, so $\{ \la_{l,k}\}_{l \ge 1}$ are the eigenvalues for some $\check \tau \in \mathbf{W}^+_{simp}$ and $\Delta_{k,k}(\la)$ is the corresonding characteristic function. Expanding $\Delta_{k,k}(\la)$ and $\tilde \Delta_{k,k}(\la)$ in terms of the Birkhoff solutions of equation \eqref{eqv} with $\check \tau$ and $\tilde \tau$, respectively, we derive their asymptotics and obtain \eqref{aD3}.

The relations \eqref{beDe}, \eqref{aD2}, and \eqref{aD3} together imply
$$
\be_{l,k} = \tilde \be_{l,k} \left( 1 + \frac{\varkappa_l}{l^{n-1}} \right), \quad n \ge N.
$$
Taking the asymptotics \eqref{asymptbe} for $\tilde \be_{l,k}$ into account, we conclude that
$$
\sum_{l = N}^{\infty} \Bigg(\sum_{k = 1}^{n-1} l^{-1}|\be_{l,k} - \tilde \be_{l,k}|\Bigg)^2 \le C(\Omega,\de), \quad N = N(\Omega).
$$
The estimate $|\be_{l,k}| \le C(\Omega,\de)$ for $l < N$, as well as the other conditions of Definitions~\ref{def:S+} and~\ref{def:SOe+}, are obtained by using the relations \eqref{prod1}, \eqref{prod2}, and \eqref{beDe} together with the corresponding conditions of Definitions~\ref{def:E+} and~\ref{def:EOde+}, which concludes the proof.   
\end{proof}

\begin{remark} \label{rem:nosep}
In fact, Proposition~\ref{prop:sc} and Theorem~\ref{thm:uni} are valid for any $\tilde \tau \in \mathbf{W}^+$. Indeed, let the spectral data of $\tilde \tau$ do not satisfy (A-1) and (A-2). In view of the asymptotics \eqref{asymptla}, the equalities $\la_{l,k} = \la_{s,k}$ and $\la_{l,k} = \la_{s,k+1}$ are possible only for a finite number of eigenvalues. Therefore, one can achieve (A-1) and (A-2) by a finite perturbation of the spectral data. Then, by using the constructive method of \cite{Bond22}, one can obtain a new vector $\tilde{\tilde \tau} \in \mathbf{W}_{simp}^+$ with the same coefficients in the spectral data asymptotics \eqref{asymptla} and \eqref{asymptbe} as $\tilde \tau$ has.
\end{remark}

\section{Case $n = 3$} \label{sec:3}

For $n = 3$, equation \eqref{eqv} takes the form
\begin{equation} \label{eqv3}
y''' + (\tau_1(x) y)' + \tau_1(x) y' + \tau_0(x) y = \la y, \quad x \in (0,1),
\end{equation}
where $\tau = \{ \tau_0, \tau_1 \} \in \mathbf{W}$ means $\tau_0 \in L_2[0,1]$, $\tau_1 \in W_2^1[0,1]$. The spectral data $\{ \la_{l,k}, \be_{l,k} \}_J$ correspond the the eigenvalue problems $\mathcal L_1$ and $\mathcal L_2$ with the boundary conditions
\begin{align} \label{bc31}
& \mathcal L_1 \colon \quad y(0) = 0, \quad y(1) = y'(1) = 0, \\ \nonumber
& \mathcal L_2 \colon \quad y(0) = y'(0) = 0, \quad y(1) = 0,
\end{align}
and $\{ \mu_{l,k}\}_J$ are the eigenvalues of the corresponding problems $\mathcal M_1$ and $\mathcal M_2$ with the boundary conditions
\begin{align*} 
& \mathcal M_1 \colon \quad y'(0) = 0, \quad y(1) = y'(1) = 0, \\ 
& \mathcal M_2 \colon \quad y(0) = y''(0) = 0, \quad y(1) = 0,
\end{align*}

\begin{thm} \label{thm:asymptsd3}
The spectral data of $\tau = \{ \tau_0, \tau_1 \} \in \mathbf{W}$ for $n = 3$ satisfy the asymptotic relations
\begin{align} \label{asymptla31}
\la_{l,1} = & \bigl(\rho_l^+\bigr)^3 - 2\theta \rho_l^+ + (\theta_0 + \theta_1 + \sigma) + \varkappa_{l,1}, \\ \label{asymptla32}
\la_{l,2} = - & \Bigl( \bigl(\rho_l^+\bigr)^3 - 2\theta \rho_l^+ + (\theta_0 + \theta_1 - \sigma) + \varkappa_{l,2} \Bigr), \\ \label{asymptmu31}
\mu_{l,1} = & \bigl(\rho_l^-\bigr)^3 - 2\theta \rho_l^- + (\sigma - \theta_0 + \theta_1) + \kappa_{l,1}, \\ \label{asymptmu32}
\mu_{l,2} = - & \Bigl( \bigl(\rho_l^-\bigr)^3 - 2\theta \rho_l^- - (\sigma + \theta_0 - \theta_1) + \kappa_{l,2} \Bigr),
\\ \label{asymptbe312}
\be_{l,k} = & -3\la_{l,k} \left( 1 + \frac{\theta + \theta_0}{2 \pi^2 l^2} + \frac{\eta_{l,k}}{l^2}\right), \quad l \ge 1, \: k = 1, 2,
\end{align}
where $\rho_l^{\pm} = \frac{2 \pi}{\sqrt 3} \bigl( l \pm \frac{1}{6}\bigr)$, $\{ \varkappa_{l,k} \}, \, \{ \kappa_{l,k}\}, \, \{ \eta_{l,k} \}, \in l_2$, and
\begin{equation} \label{const3}
\theta = \int_0^1 \tau_1(x) \, dx, \quad \theta_0 = \tau_1(0), \quad
\theta_1 = \tau_1(1), \quad \sigma = \int_0^1 \tau_0(x) \, dx.
\end{equation}
\end{thm}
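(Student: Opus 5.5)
We derive the asymptotics by the standard method \cite{Nai68, Yur22}: characteristic determinants are expanded in Birkhoff solutions (Proposition~\ref{prop:Birk}, Appendix~\ref{app:Weyl}) with explicitly computed first correction terms, and the resulting transcendental equations are solved by successive approximation. Put $\la = \rho^3$ for $\mathcal L_1, \mathcal M_1$ and $\la = -\rho^3$ for $\mathcal L_2, \mathcal M_2$. Let $\om_1,\om_2,\om_3$ be the cube roots of unity, ordered so that $\mbox{Re}(\rho\om_1) \le \mbox{Re}(\rho\om_2) \le \mbox{Re}(\rho\om_3)$ in the relevant sector.

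\textbf{Step 1: Birkhoff solutions.} Construct solutions $y_j(x,\rho)$ of \eqref{eqv3} of the form
\[
y_j(x,\rho) = \exp(\rho\om_j x)\Bigl(1 + \frac{a_{1,j}(x)}{\rho} + \frac{a_{2,j}(x)}{\rho^2} + \frac{a_{3,j}(x,\rho)}{\rho^3}\Bigr).
\]
Substituting into \eqref{eqv3} and comparing powers of $\rho$ gives the following. The term $a_{1,j}$ is proportional to $\om_j^{-1}\int_0^x \tau_1\,dt$. The term $a_{2,j}$ involves $\tau_1(x)$, $\bigl(\int_0^x\tau_1\bigr)^2$ and $\int_0^x \tau_0\,dt$. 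The remainder $a_{3,j}$ is controlled only in an $l_2$ sense over $\rho = \rho_l$, which is what the assumptions $\tau_1 \in W_2^1$ and $\tau_0 \in L_2$ allow. The same expansions are computed for the derivatives $y_j', y_j''$. This step is exactly where the constants $\theta$, $\theta_0 = \tau_1(0)$, $\theta_1 = \tau_1(1)$ and $\sigma$ of \eqref{const3} enter.

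\textbf{Step 2: eigenvalues.} Express the characteristic function $\Delta_{1,1}(\la)$ of $\mathcal L_1$ (boundary conditions $y(0)=0$, $y(1)=y'(1)=0$) as a $3\times3$ determinant in $y_j^{(\nu)}(0), y_j^{(\nu)}(1)$. Do the same for $\mathcal L_2$, $\mathcal M_1$ and $\mathcal M_2$.

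In the sector, the exponentials $e^{\rho\om_j}$ are dominated by two of them. Therefore, after dividing by the dominant factor, each determinant reduces to
\[
e^{\rho(\om_2-\om_3)}\,A(\rho) + B(\rho) = 0,
\]
where $A = A_0 + A_1/\rho + A_2/\rho^2 + \dots$ and similarly for $B$. For $\tau = 0$ this equation gives exactly the nodes $\rho_l^{\pm} = \frac{2\pi}{\sqrt3}(l \pm \frac16)$. The shift $\pm\frac16$ comes from the ratio $A_0/B_0$, which is a root of unity factor that differs between $\mathcal L$-type and $\mathcal M$-type conditions.

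Then set $\rho = \rho_l^{\pm} + \epsilon_l$ and expand to get $\epsilon_l = \alpha/\rho_l^2 + \beta/\rho_l^3 + O(\gamma_l/\rho_l^2)$ with $\{\gamma_l\}\in l_2$. Cubing gives
\[
\rho^3 = (\rho_l^{\pm})^3 + 3\alpha + \dots,
\]
and $\rho_l^{\pm}$ is real. Since the $\rho^{-1}$ coefficient involves $\theta$, the term $-2\theta\rho_l^{\pm}$ arises from the order-$\rho^{-1}$ correction. The constant term collects $\sigma$ and the boundary values $\theta_0, \theta_1$, with the sign patterns differing among $\mathcal L_1, \mathcal L_2, \mathcal M_1, \mathcal M_2$.

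As a consistency check, the symmetry \eqref{sasp} ($\la_{l,1} = -\overline{\la_{l,2}}$ in the self-adjoint case) should swap the sign of $\sigma$, as \eqref{asymptla31}--\eqref{asymptla32} indicate.

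\textbf{Step 3: weight numbers.} Use \eqref{beDe}, i.e. $\be_{l,k} = -\Delta_{k+1,k}(\la_{l,k})/\dot\Delta_{k,k}(\la_{l,k})$. Alternatively, compute the residue of $M_k = \Phi_k^{(k)}(0,\cdot)$ directly from the Birkhoff expansion. Evaluate at $\rho = \rho_l^+ + \epsilon_l$ and use $d\la/d\rho = 3\rho^2$. The leading term gives $-3\la_{l,k}$. The $\rho^{-1}$ correction cancels, because for $n=3$ the asymptotics \eqref{asymptbe} has $d_{1,k} = 0$, which follows from $\om$-symmetry of $a_{1,j}$. The $\rho^{-2}$ term yields $(\theta+\theta_0)/(2\pi^2 l^2)$ after substituting $\rho_l^2 \approx \frac{4\pi^2}{3}l^2$, with the remainder in $l^{-2}l_2$.

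\textbf{Main obstacle.} The main difficulty is the bookkeeping of second-order Birkhoff coefficients inside the determinants. This includes showing that the $l_2$ remainders from $a_{3,j}$, which are oscillatory integrals like $\int_0^1 \tau_1'(t)e^{\rho c t}dt$, are genuinely $l_2$ in $l$. I would handle this by a symbolic expansion, as in Appendix~\ref{app:calc} with Python, for the coefficients. The $l_2$ property follows from Bessel's inequality for exponential systems evaluated at the nodes $\rho_l^\pm$.
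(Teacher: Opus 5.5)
Your plan follows the paper's proof. It uses Birkhoff solutions with explicit first two correction terms, reduces the characteristic determinant to $r_1(\rho)-r_2(\rho)e^{\rho(\om_1-\om_2)}+\eps(\rho)$, solves for the zeros by successive approximation, and obtains $\be_{l,k}=3\la_{l,k}\,d^+(\rho_{l,k})/\frac{d}{d\rho}d(\rho_{l,k})$ as in \eqref{calcbe1}. The one structural difference is that you treat $\mathcal L_2,\mathcal M_2$ directly with $\la=-\rho^3$, while the paper derives $k=2$ from $k=1$ via \eqref{relstar}. The adjoint equation is \eqref{eqv3} for $\{-\tau_0,\tau_1\}$ with $\la$ replaced by $-\la$. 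So the $k=2$ formulas follow for every $\tau\in\mathbf W$ by flipping the sign of $\sigma$ and the overall sign. Your check via \eqref{sasp} is only available in the self-adjoint case.

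Several steps, however, are misstated as written.

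(i) Your $\epsilon_l$ cannot start at order $\rho_l^{-2}$. Cubing $\rho_l^{\pm}+\alpha\rho_l^{-2}+\dots$ produces no term linear in $\rho_l^{\pm}$, which contradicts your next sentence. Correctly, $\rho_{l,1}=\rho_l^{+}+a/\rho_l^{+}+b/(\rho_l^{+})^2+\gamma_l l^{-2}$ with $a=-2\theta/3$, $b=(\sigma+\theta_0+\theta_1)/3$ and $\{\gamma_l\}\in l_2$. Cubing then gives $-2\theta\rho_l^{+}+(\sigma+\theta_0+\theta_1)$ modulo $l_2$.

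(ii) The Birkhoff remainder is not $\rho^{-3}$ times an $l_2$ quantity. It contains terms $\rho^{-2}\int_0^x e^{\rho(\om_j-\om_k)(x-t)}f(t)\,dt$ with $f$ built from $\tau_1'$ and $\tau_0\in L_2$, which are only $\rho^{-2}\Upsilon(x,\rho)$. That weaker bound is what the paper uses, and it suffices.

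(iii) You cannot justify the missing $l^{-1}$ term in $\be_{l,k}$ by ``$d_{1,k}=0$ in \eqref{asymptbe}'': that is the claim itself. It comes out of the computation. In \eqref{smcalcbe}, the numerator is $3(\om-\om^2)\bigl(1-\frac{2\theta}{3\rho}\bigr)$ and the denominator is $-3(\om-\om^2)\bigl(1-\frac{2\theta}{3\rho}\bigr)$, both up to $O(\rho^{-2})$. Hence the quotient is $-1+O(\rho^{-2})$.

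(iv) For $\mathcal L_1,\mathcal M_1$ near the positive axis, the competing exponentials are $e^{\rho(\om_3+\om_1)}$ and $e^{\rho(\om_3+\om_2)}$. So the oscillating factor is $e^{\rho(\om_1-\om_2)}$, not $e^{\rho(\om_2-\om_3)}$.

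(v) The asymptotic analysis locates large zeros only up to an integer shift of $l$. Starting the formulas at $l=1$ requires showing that this shift does not depend on $\tau$, e.g.\ by a Rouch\'e count on large contours or by continuity in $\tau$, and then checking $\tau=0$. The paper settles this in one line.
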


The proof of Theorem~\ref{thm:asymptsd3} is provided in Appendix~\ref{app:calc3}.

Suppose that $\tau \in \mathbf{W}^+$, that is,
$\tau_1 \in W_2^1[0,1]$ is real-valued and $\tau_0 \in L_2[0,1]$ is purely imaginary. In view of \eqref{sa}, the spectral data $\{ \la_l, \be_l \}_{l \ge 1} := \{ \la_{l,1}, \be_{l,1}\}_{l \ge 1}$ are sufficient for the unique reconstruction of $\tau \in \mathbf{W}_{simp}^+$. Proposition~\ref{prop:sc} implies the following sufficient conditions for the existence of the inverse problem solution. 

\begin{thm} \label{thm:char3}
Let a sequence $\{ \la_l, \be_l \}_{l \ge 1}$ satisfy the conditions $\la_l \ne \la_s$ $(l \ne s)$, $\mbox{Re} \, \la_l > 0$, $\be_l \ne 0$ for all $l \ge 1$, and the asymptotics
\begin{align} \label{asymptla3}
& \la_l = \bigl( \rho_l^+ \bigr)^3 - 2 \theta \rho_l^+ + (\theta_0 + \theta_1 + \sigma) + \varkappa_l, \quad \rho_l^+ := \tfrac{2 \pi}{\sqrt 3} \bigl( l + \tfrac{1}{6} \bigr), \quad \{ \varkappa_l \} \in l_2, \\ \label{asymptbe3}
& \be_l = -3\la_l \left( 1 + \frac{\theta + \theta_0}{2 \pi^2 l^2} + \frac{\eta_l}{l^2} \right), \quad \{ \eta_l \}\in l_2,
\end{align}
where $\theta$, $\theta_1$, and $\theta_0$ are real constants and $\sigma$ is a purely imaginary constant. Then the numbers $\{ \la_l, \be_l \}_{l \ge 1}$ are the spectral data of a unique vector $\tau = \{ \tau_0, \tau_1 \} \in \mathbf{W}_{simp}^+$. Moreover, there holds~\eqref{const3}.
\end{thm}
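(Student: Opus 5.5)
The plan is to reduce Theorem~\ref{thm:char3} to Proposition~\ref{prop:sc} by building a suitable model vector $\tilde\tau \in \mathbf{W}_{simp}^+$ from the given constants $\theta,\theta_0,\theta_1,\sigma$.

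\emph{Step 1: extend the data to both indices.} Put $\la_{l,1} := \la_l$, $\be_{l,1} := \be_l$, and define $\la_{l,2} := -\overline{\la_l}$, $\be_{l,2} := -\overline{\be_l}$, so that \eqref{sa} holds with $n=3$. For $n = 2p+1 = 3$ we have $p=1$, and \eqref{addhyp2} reads $\mbox{Re}\,\la_{l,1} > 0$, which is assumed. Condition (A-1) for $k=1$ is assumed, and it transfers to $k=2$ by conjugation. For (A-2), $\la_{l,1} = \la_{s,2} = -\overline{\la_s}$ would force $\mbox{Re}\,\la_l = -\mbox{Re}\,\la_s$, which is impossible because both real parts are positive. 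Finally, $\be_{l,k}\neq0$. Since $\sigma$ is purely imaginary, $-\overline{\sigma} = \sigma$. Conjugating \eqref{asymptla3} therefore shows that $\la_{l,2}$ has the form \eqref{asymptla32}, and $\be_{l,2}$ has the form \eqref{asymptbe312}, both with the same real $\theta,\theta_0,\theta_1$.

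\emph{Step 2: construct the model vector.} I will take $\tilde\tau_1$ as a real-valued $W_2^1$ function, for example a quadratic polynomial, with $\int_0^1\tilde\tau_1=\theta$, $\tilde\tau_1(0)=\theta_0$, and $\tilde\tau_1(1)=\theta_1$. I will take $\tilde\tau_0 \equiv \sigma$, which is purely imaginary, so $\tilde\tau\in\mathbf{W}^+$. By Theorem~\ref{thm:asymptsd3}, the spectral data of $\tilde\tau$ obey the same asymptotics with the same constants. Therefore $\Xi<\infty$ in \eqref{defXi}: the main terms cancel, the differences of the $\la$'s form an $l_2$ sequence, and $l^{-1}|\be_{l,k}-\tilde\be_{l,k}| \le C(|\la_{l,k}-\tilde\la_{l,k}|/l + l|\eta_{l,k}-\tilde\eta_{l,k}|/l^{2}\cdot l^{... })$. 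The $\be$-difference needs some care. Since $\la_{l,k}\sim l^3$, we have $\be-\tilde\be = -3(\la-\tilde\la)(1+O(l^{-2})) - 3\tilde\la\,(\eta-\tilde\eta)l^{-2} + \dots$, so $l^{-1}|\be-\tilde\be| \le C(|\la-\tilde\la| + |\eta-\tilde\eta|)$, which lies in $l_2$.

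\emph{Step 3: apply Proposition~\ref{prop:sc}.} The model $\tilde\tau$ may fail (A-1) or (A-2). Remark~\ref{rem:nosep} handles this: it supplies a model in $\mathbf{W}_{simp}^+$ with identical asymptotic constants, and Proposition~\ref{prop:sc} then yields a unique $\tau\in\mathbf{W}_{simp}^+$. Because the data are determined by the $k=1$ part via \eqref{sa}, uniqueness from $\{\la_l,\be_l\}$ alone also holds. To get \eqref{const3}, I will compare the asymptotics that Theorem~\ref{thm:asymptsd3} gives for $\tau$ itself with the assumed ones. The $\rho_l^+$ coefficient fixes $\int\tau_1=\theta$. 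The constant terms of $\la_{l,1}$ and $\la_{l,2}$ fix $\theta_0+\theta_1$ and $\sigma$. The $\be$ asymptotics fix $\theta+\theta_0$, and hence $\theta_0$ and then $\theta_1$.

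The main obstacle is the validity of Remark~\ref{rem:nosep}, i.e.\ producing a simple model with matching asymptotics; alternatively, one can perturb $\tilde\tau_1$ slightly to restore simplicity. The first thing I would try is to use the remark directly, since the paper already states it.
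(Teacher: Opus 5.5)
Your proposal is correct and follows essentially the same route as the paper. Both extend the data by $\la_{l,2} := -\overline{\la_l}$, $\be_{l,2} := -\overline{\be_l}$, take a real quadratic $\tilde\tau_1$ with prescribed $\int_0^1\tilde\tau_1$, $\tilde\tau_1(0)$, $\tilde\tau_1(1)$ together with $\tilde\tau_0 \equiv \sigma$, verify membership in $\mathcal S^+(\tilde\tau)$ via Theorem~\ref{thm:asymptsd3}, and conclude by Proposition~\ref{prop:sc} and Remark~\ref{rem:nosep}. Your explicit checks of (A-2), \eqref{addhyp2}, the $\be$-part of $\Xi$, and the recovery of \eqref{const3} by comparing asymptotics fill in details the paper leaves implicit. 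Delete the garbled inequality in Step~2, since the line after it already gives the correct bound.
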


\begin{proof}
Choose a model vector $\tilde \tau = \{ \tilde \tau_0, \tilde \tau_1 \} \in \mathbf{W}^+$ such that $\tilde \theta = \theta$, $\tilde \theta_0 = \theta_0$, $\tilde \theta_1 = \theta_1$, and $\tilde \sigma = \sigma$ according to \eqref{const3}. For instance, put
\begin{equation} \label{mod3}
\tilde \tau_1(x) := (3\theta_1 + 3\theta_0 - 6\theta)x^2 + (6\theta - 4 \theta_0 - 2\theta_1)x + \theta_0, \quad \tilde \tau_0(x) := \sigma.
\end{equation}
Then, in view of Theorem~\ref{thm:asymptsd3}, the data $\la_{l,1} := \la_l$, $\be_{l,1} := \be_l$, $\la_{l,2} := -\overline{\la_l}$, $\be_{l,2} := -\overline{\be_l}$ belong to $\mathcal S^+(\tilde \tau)$. Proposition~\ref{prop:sc} and Remark~\ref{rem:nosep} conclude the proof.
\end{proof}

Theorem~\ref{thm:char3} is consistent with \cite[Theorem~2.3]{Bond23-res} for the class $\tau_0 \in W_2^{-1}[0,1]$, $\tau_1 \in L_2[0,1]$.

Analogously, Theorem~\ref{thm:scsp} implies the following sufficient conditions for the unique reconstruction of $\tau = \{ \tau_0, \tau_1\} \in \mathbf{W}^+_{simp}$ from the two spectra $\{ \la_l, \mu_l \}_{l \ge 1} := \{ \la_{l,1}, \mu_{l,1}\}_{l \ge 1}$.

\begin{thm} \label{thm:char3sp}
Let a sequence $\{ \la_l, \mu_l \}_{l \ge 1}$ satisfy the conditions $\la_l \ne \la_s$ $(l \ne s)$, $\la_l \ne \mu_s$, $\mbox{Re} \, \la_l > 0$ for all $l,s \ge 1$, the asymptotics \eqref{asymptla3} and
\begin{equation} \label{asymptmu3}
\mu_l = \bigl(\rho_l^-\bigr)^3 - 2\theta \rho_l^- + (\sigma - \theta_0 + \theta_1) + \kappa_l, \quad \rho_l^- := \tfrac{2\pi}{\sqrt 3} \bigl( l - \tfrac{1}{6}\bigr), \quad 
\{\kappa_l\ln (l+1)\} \in l_2,
\end{equation}
where $\theta$, $\theta_0$, and $\theta_1$ are real constants and $\sigma$ is a purely imaginary constant. Then $\{ \la_l, \mu_l \}_{l \ge 1}$ are the eigenvalues of the problems $\mathcal L_1$ and $\mathcal M_1$ for a unique vector $\tau = \{ \tau_0, \tau_1\} \in \mathbf{W}_{simp}^+$. Moreover, there holds \eqref{const3}.
\end{thm}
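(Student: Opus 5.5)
The plan is to deduce Theorem~\ref{thm:char3sp} from Theorem~\ref{thm:scsp}, following the proof of Theorem~\ref{thm:char3}. I would take the model vector $\tilde \tau \in \mathbf{W}^+$ from \eqref{mod3}. It satisfies $\tilde \theta = \theta$, $\tilde \theta_0 = \theta_0$, $\tilde \theta_1 = \theta_1$, $\tilde \sigma = \sigma$, and $\tilde\tau_1$ is real while $\tilde \tau_0 = \sigma$ is purely imaginary. By Remark~\ref{rem:nosep}, which transfers Proposition~\ref{prop:sc} and therefore also Theorem~\ref{thm:scsp} through the reduction of Section~\ref{sec:sp}, we do not need (A-1) and (A-2) for $\tilde\tau$ itself. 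Alternatively, one can replace $\tilde \tau$ by a $\tilde{\tilde\tau} \in \mathbf{W}^+_{simp}$ with the same asymptotic constants.

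Next I would build the full eigenvalue set by setting $\la_{l,1} := \la_l$, $\mu_{l,1} := \mu_l$, $\la_{l,2} := -\overline{\la_l}$, $\mu_{l,2} := -\overline{\mu_l}$. Then \eqref{sasp} holds by construction, since $n = 3$ gives $(-1)^n = -1$. The remaining step is to check the conditions of Definition~\ref{def:E+}.
\begin{itemize}
\item (A-1) follows from $\la_l \ne \la_s$.
\item For (A-2) we need $\la_{l,1} \ne \la_{s,2} = -\overline{\la_s}$. This holds because $\mbox{Re}\,\la_l > 0$ while $\mbox{Re}(-\overline{\la_s}) < 0$.
\item \eqref{addhyp2} with $p = 1$ reads $\mbox{Re}\,\la_{l,1} > 0$, which is assumed.
\item \eqref{sepsp} for $k = 1$ is $\la_l \ne \mu_s$. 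For $k = 2$ it follows by conjugation.
\item \eqref{inter} is void because $n$ is odd.
\end{itemize}

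Finally I would check $\Theta < \infty$. Theorem~\ref{thm:asymptsd3} applied to $\tilde\tau$ gives $\tilde \la_{l,1}$ and $\tilde \mu_{l,1}$ with the same leading terms as \eqref{asymptla3} and \eqref{asymptmu3}, and with remainders $\tilde\varkappa_l, \tilde\kappa_l \in l_2$. Hence $\la_l - \tilde\la_{l,1} = \varkappa_l - \tilde \varkappa_{l,1} \in l_2$. For $k = 2$ the consistency of \eqref{asymptla32} with conjugation uses that $\theta, \theta_0, \theta_1$ are real and $\sigma$ is imaginary, so $-\overline{(\theta_0+\theta_1+\sigma)} = -(\theta_0+\theta_1-\sigma)$. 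The analogous identity for $\mu$ matches \eqref{asymptmu32}.

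The main obstacle is the weighted condition $\{\ln(l+1)(\mu_l - \tilde\mu_{l,1})\} \in l_2$. Our hypothesis gives $\ln(l+1)\kappa_l \in l_2$, but Theorem~\ref{thm:asymptsd3} only gives $\tilde\kappa_{l,1} \in l_2$ for the model. I would therefore show that for the smooth model \eqref{mod3}, with $\tilde \tau_1$ polynomial and $\tilde \tau_0$ constant, the remainder is actually $\tilde \kappa_{l,1} = O(l^{-1})$. This should follow by carrying the Birkhoff expansion of Appendix~\ref{app:calc} one more order, since the remainder terms there come from Fourier-type integrals of $\tilde\tau_0$ and $\tilde\tau_1'$, and these decay like $l^{-1}$ for smooth coefficients. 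Then $\ln(l+1)\tilde\kappa_{l,1} \in l_2$.

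With all conditions verified, $\{\la_{l,k}, \mu_{l,k}\}_J \in \mathcal E^+(\tilde\tau)$, and Theorem~\ref{thm:scsp} yields a unique $\tau \in \mathbf{W}^+_{simp}$. Uniqueness from the single pair $\{\la_l,\mu_l\}$ holds because \eqref{sasp} determines the $k=2$ spectra. Comparing the asymptotics of the spectrum of $\tau$, given by Theorem~\ref{thm:asymptsd3}, with \eqref{asymptla3} and \eqref{asymptmu3} identifies the constants. The $l\rho_l^+$ term gives $\theta$. The constant terms give $\theta_0+\theta_1+\sigma$ and $\sigma-\theta_0+\theta_1$. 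Separating real and imaginary parts then gives $\sigma$ and $\theta_1$, and subtracting gives $\theta_0$. This proves \eqref{const3}.
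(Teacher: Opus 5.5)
Your proposal is correct and follows the route the paper intends. The paper's own proof is only ``analogously'' to Theorem~\ref{thm:char3}: take the model \eqref{mod3}, complete the data via \eqref{sasp}, verify membership in $\mathcal E^+(\tilde\tau)$, and apply Theorem~\ref{thm:scsp} together with Remark~\ref{rem:nosep}.

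You also spot a genuine detail the paper passes over. For $\Theta<\infty$ the model remainders must themselves satisfy $\{\ln(l+1)\,\tilde\kappa_{l,1}\}\in l_2$, and Theorem~\ref{thm:asymptsd3} alone does not give this. Your fix is sound: the smooth polynomial model yields $\tilde\kappa_{l,1}=O(l^{-1})$.
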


Similarly, Theorems~\ref{thm:asymptsd3}, \ref{thm:uni}, and~\ref{thm:unisp} imply the following theorems on the uniform stability of Inverse Problem~\ref{ip:main} in the case $n = 3$.

\begin{thm} \label{thm:uni3}
Let $\Omega > 0$ and $\de > 0$ be fixed, and let each of two sequences $\{ \la_{l,(1)}, \be_{l,(1)} \}_{l \ge 1}$ and $\{ \la_{l,(2)}, \be_{l,(2)} \}_{l \ge 1}$ satisfy the conditions
$$
|\la_l - \la_s| \ge \de \: (l \ne s), \quad |\mbox{Re} \, \la_l| \ge \de, \quad |\be_l| \ge \de, \quad l \ge 1,
$$
and the asymptotics \eqref{asymptla3} and \eqref{asymptbe3} with the same real constants $\theta$, $\theta_1$, $\theta_0$, purely imaginary $\sigma$ and such remainder terms that
$$
\| \{ \varkappa_l \}_{l \ge 1} \|_{l_2} \le \Omega, \quad \| \{ \eta_l \}_{l \ge 1} \|_{l_2} \le \Omega.
$$
Then
$$
\| \tau_{0,(1)} - \tau_{0,(2)} \|_{L_2[0,1]} \le C(\Omega,\de) Z_1, \quad
\| \tau_{1,(1)} - \tau_{1,(2)} \|_{W_2^1[0,1]} \le C(\Omega, \de) Z_1,
$$
where 
$$
Z_1 := \left( \sum_{l = 1}^{\infty} \bigl( |\varkappa_{l,(1)} - \varkappa_{l,(2)}| + |\eta_{l,(1)} - \eta_{l,(2)}| \bigr)^2 \right)^{1/2}.
$$
\end{thm}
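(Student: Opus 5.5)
The plan is to deduce Theorem~\ref{thm:uni3} directly from Theorem~\ref{thm:uni}, in the same way as Theorem~\ref{thm:char3} was deduced from Proposition~\ref{prop:sc}.

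First, we fix the model vector $\tilde \tau = \{\tilde \tau_0, \tilde \tau_1\}$ by \eqref{mod3}, built from the common constants $\theta$, $\theta_0$, $\theta_1$, $\sigma$. It lies in $\mathbf{W}^+$, and by Remark~\ref{rem:nosep} we may treat it as an element of $\mathbf{W}^+_{simp}$ with the same asymptotic coefficients. For $u = 1, 2$ we extend the data to the full set indexed by $J$:
$$
\la_{l,1,(u)} := \la_{l,(u)}, \quad \be_{l,1,(u)} := \be_{l,(u)}, \quad \la_{l,2,(u)} := -\overline{\la_{l,(u)}}, \quad \be_{l,2,(u)} := -\overline{\be_{l,(u)}}.
$$
Relations \eqref{sa} with $n = 3$ then hold by construction, and \eqref{addhyp2} with $p = 1$ is $\mbox{Re}\,\la_{l,1} > 0$. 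Theorem~\ref{thm:asymptsd3} applied to $\tilde \tau$ shows that $\tilde \la_{l,k}$ and $\tilde \be_{l,k}$ have the asymptotics \eqref{asymptla31}, \eqref{asymptla32}, \eqref{asymptbe312} with the same constants. Hence
$$
\la_{l,1,(u)} - \tilde \la_{l,1} = \varkappa_{l,(u)} - \tilde\varkappa_{l,1}, \qquad l^{-1}\bigl(\be_{l,1,(u)} - \tilde \be_{l,1}\bigr) = O\bigl(|\la_{l,(u)} - \tilde \la_{l,1}|\, l^{-1}\bigr) + O\bigl(l^{-1}\la_{l}\, l^{-2}|\eta_{l,(u)} - \tilde \eta_{l,1}|\bigr).
$$
Since $\la_l \sim l^3$, the second term is $O(|\eta_{l,(u)} - \tilde\eta_{l,1}|)$ and the first is $O(|\varkappa_{l,(u)}-\tilde\varkappa_{l,1}|)$. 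Therefore $\Xi_{(u)} \le C(\tilde\tau)(\Omega + 1)$, which is a bound depending only on $\Omega$ once the constants are fixed. The same computation applied to the differences of the two data sets gives
$$
Z \le C\,Z_1 + C\sum_l |\varkappa_{l,(1)}-\varkappa_{l,(2)}|\,l^{-1}\ldots,
$$
so in fact $Z \le C(\Omega) Z_1$. The $k = 2$ terms are handled by conjugation with the same bounds.

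Next, we check the separation conditions \eqref{bbound1} with some $\de_1(\de,\Omega) > 0$. The conditions $|\la_{l,k}-\la_{s,k}| \ge \de$, $|\be_{l,k}|\ge\de$ and $|\mbox{Re}\,\la_{l,1}|\ge\de$ are given directly or by conjugation. The remaining condition is $|\la_{l,1} - \la_{s,2}| = |\la_{l,1} + \overline{\la_{s,1}}| \ge \mbox{Re}\,\la_{l,1} + \mbox{Re}\,\la_{s,1} \ge 2\de$. Both data sets therefore lie in $\mathcal S^+_{\Omega_1,\de}(\tilde\tau)$, and Theorem~\ref{thm:uni} yields the estimate with $C(\Omega_1,\de) Z \le C(\Omega,\de) Z_1$.

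The main obstacle is that the constant in Theorem~\ref{thm:uni} depends on $\tilde\tau$, which here depends on $\theta$, $\theta_0$, $\theta_1$, $\sigma$. The statement of Theorem~\ref{thm:uni3} fixes these constants across both sequences, so the implicit dependence on them is admissible. If uniformity in them were wanted, one would need an additional compactness argument, which I would not attempt.
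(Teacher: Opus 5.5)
Your proposal is correct and follows the paper's route: take the model vector \eqref{mod3} (made simple via Remark~\ref{rem:nosep}), complete the data by \eqref{sa}, use Theorem~\ref{thm:asymptsd3} to place both sequences in $\mathcal S^+_{\Omega_1,\de}(\tilde\tau)$ with $Z \le C Z_1$, and apply Theorem~\ref{thm:uni}. Note that, as in the paper, you actually need $\mbox{Re}\,\la_l \ge \de$ rather than only $|\mbox{Re}\,\la_l| \ge \de$ as literally written. This positivity, carried over from Theorem~\ref{thm:char3}, is used both for \eqref{addhyp2} and for $|\la_{l,1}-\la_{s,2}| \ge 2\de$, so it is the intended reading of the hypothesis.
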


\begin{thm} \label{thm:uni3sp}
Let $\Omega > 0$ and $\de > 0$ be fixed, and let each of two sequences $\{ \la_{l,(1)}, \mu_{l,(1)} \}_{l \ge 1}$ and $\{ \la_{l,(2)}, \mu_{l,(2)} \}_{l \ge 1}$ satisfy the conditions
\begin{gather*}
|\la_l - \la_s| \ge \de \: (l \ne s), \quad |\mbox{Re} \, \la_l| \ge \de, \quad
|\la_l - \mu_s| \ge \de, \quad l,s \ge 1,
\end{gather*}
and the asymptotics \eqref{asymptla3} and \eqref{asymptmu3} with the same real constants $\theta$, $\theta_1$, $\theta_0$, purely imaginary $\sigma$ and such remainder terms that
$$
\| \{ \varkappa_l \}_{l \ge 1} \|_{l_2} \le \Omega, \quad \| \{ \kappa_l \ln(l + 1) \}_{l \ge 1} \|_{l_2} \le \Omega.
$$
Then
$$
\| \tau_{0,(1)} - \tau_{0,(2)} \|_{L_2[0,1]} \le C(\Omega,\de) X_1, \quad
\| \tau_{1,(1)} - \tau_{1,(2)} \|_{W_2^1[0,1]} \le C(\Omega, \de) X_1,
$$
where 
$$
X_1 := \left( \sum_{l = 1}^{\infty} \bigl( |\varkappa_{l,(1)} - \varkappa_{l,(2)}| + \ln(l+1)|\kappa_{l,(1)} - \kappa_{l,(2)}| \bigr)^2 \right)^{1/2}.
$$
\end{thm}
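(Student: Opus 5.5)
The plan is to reduce Theorem~\ref{thm:uni3sp} to Theorem~\ref{thm:unisp} with $n = 3$ and a suitable model vector, in the same way that Theorem~\ref{thm:char3sp} was obtained from Theorem~\ref{thm:scsp}. Both sequences share the real constants $\theta$, $\theta_0$, $\theta_1$ and the purely imaginary $\sigma$. We therefore take the single model vector $\tilde \tau = \{\tilde\tau_0,\tilde\tau_1\}$ given by \eqref{mod3}. By \eqref{const3} its constants coincide with $\theta,\theta_0,\theta_1,\sigma$. By Theorem~\ref{thm:asymptsd3}, its eigenvalues $\tilde\la_{l,1}$, $\tilde\mu_{l,1}$ have the asymptotics \eqref{asymptla3}, \eqref{asymptmu3} with remainders $\{\tilde\varkappa_l\}\in l_2$ and $\{\tilde\kappa_l\}$.

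We then build the full eigenvalue sets $\{\la_{l,k,(u)},\mu_{l,k,(u)}\}_J$, $u=1,2$, using the symmetry \eqref{sasp}: $\la_{l,2,(u)} := -\overline{\la_{l,1,(u)}}$ and $\mu_{l,2,(u)} := -\overline{\mu_{l,1,(u)}}$. Next we check membership in $\mathcal E^+_{\Omega_1,\de_1}(\tilde\tau)$.

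\begin{itemize}
\item Condition \eqref{addhyp2} with $p=1$ is $\mbox{Re}\,\la_{l,1}>0$. For large $l$ it follows from the asymptotics; for finitely many $l$ it comes from $|\mbox{Re}\,\la_l|\ge\de$ together with a continuity/sign argument. If that argument fails, we assume $\mbox{Re}\,\la_l>0$ as in Theorem~\ref{thm:char3sp}.
\item Condition \eqref{inter} is vacuous for odd $n$.
\item (A-1) and \eqref{sepsp} follow from the given separation bounds.
\item (A-2) requires $\la_{l,1}\ne\la_{s,2}=-\overline{\la_{s,1}}$. This is guaranteed by $\mbox{Re}\,\la_{l,1}>0$; quantitatively, $|\la_{l,1}-\la_{s,2}|\ge 2\,\mbox{Re}\,\la_{l,1}\ge2\de$.
\item $|\la_{l,k}|\ge\de$ follows from $|\mbox{Re}\,\la_l|\ge\de$.
\end{itemize}

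For the quantity $\Theta$ in \eqref{defTheta}, the main terms cancel. This gives $|\la_{l,k}-\tilde\la_{l,k}|\le|\varkappa_l|+|\tilde\varkappa_l|$ and $\ln(l+1)|\mu_{l,k}-\tilde\mu_{l,k}|\le\ln(l+1)(|\kappa_l|+|\tilde\kappa_l|)$. Hence $\Theta\le C(\Omega_0+\Omega)$, where $\Omega_0$ depends only on $\tilde\tau$, i.e.\ only on the fixed constants.

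The one point needing care is that $\{\tilde\kappa_l\ln(l+1)\}\in l_2$ for the model vector, since Theorem~\ref{thm:asymptsd3} only gives $\{\tilde\kappa_l\}\in l_2$. I would handle this first by noting that the model \eqref{mod3} is a polynomial, hence smooth. For smooth coefficients the remainders in Birkhoff-type asymptotics (Appendix~\ref{app:calc}) are actually $O(l^{-1})$, so the $\ln$-weighted sequence is in $l_2$. Failing that, I would use the construction of Remark~\ref{rem:nosep} to choose an admissible smooth model vector with the same constants. The same reasoning shows that a finite perturbation ensures $\tilde\tau\in\mathbf{W}^+_{simp}$, as Theorem~\ref{thm:unisp} requires, with Remark~\ref{rem:nosep} justifying any replacement.

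Finally, the differences satisfy $\la_{l,1,(1)}-\la_{l,1,(2)}=\varkappa_{l,(1)}-\varkappa_{l,(2)}$, and similarly for $\mu$ with $\kappa$. The $k=2$ terms have equal moduli by conjugation. Hence $X\le 2X_1$. Applying Theorem~\ref{thm:unisp} with constants depending only on $\Omega,\de$ (and the fixed asymptotic constants, which are absorbed in $C$) gives the desired estimates for $\tau_0$ in $L_2$ and $\tau_1$ in $W_2^1$. By Theorem~\ref{thm:scsp}, the solutions are those of the problem by $\{\la_l,\mu_l\}$, since the $k=2$ spectra are determined by symmetry.
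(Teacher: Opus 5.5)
Your proposal is correct and follows the paper's route: the paper obtains this theorem directly from Theorem~\ref{thm:asymptsd3} and Theorem~\ref{thm:unisp}, using the model vector \eqref{mod3}, the $k=2$ data defined by symmetry, and Remark~\ref{rem:nosep}; your check that the smooth model satisfies $\{\tilde\kappa_l \ln(l+1)\} \in l_2$ makes explicit a point the paper leaves implicit. Two small corrections: no ``continuity/sign argument'' can produce $\mbox{Re}\,\la_l > 0$ for small $l$ from $|\mbox{Re}\,\la_l| \ge \de$, so positivity must be taken as the implicit hypothesis inherited from Theorem~\ref{thm:char3sp} (it is needed anyway for $\tau_{(1)},\tau_{(2)}$ to exist), and the separation bound should read $|\la_{l,1} - \la_{s,2}| \ge \mbox{Re}\,\la_{l,1} + \mbox{Re}\,\la_{s,1} \ge 2\de$, not $\ge 2\,\mbox{Re}\,\la_{l,1}$.
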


In a special case, we obtain the following lemma on additional properties of the spectral data $\{ \la_l, \be_l \}_{l \ge 1}$.

\begin{lem} \label{lem:30}
Suppose that $\tau_1$ is a real-valued function of $W_2^1[0,1]$ and $\tau_0 \equiv 0$. Then, for each $l \ge 1$, there exists an index $p(l)$ such that $\la_{p(l)} = \overline{\lambda_l}$ and $\be_{p(l)} = \overline{\be_l}$. Furthermore, the spectrum $\{ \mu_l \}_{l \ge 1}$ is symmetric with respect to the real line counting with multiplicities. 
In particular, for all sufficiently large $l$, the values $\la_l$, $\be_l$, and $\mu_l$ are real.
\end{lem}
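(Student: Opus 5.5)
The plan is to exploit a symmetry of equation \eqref{eqv3} when $\tau_0 \equiv 0$ and $\tau_1$ is real: the coefficients are real, so complex conjugation maps solutions at $\la$ to solutions at $\overline{\la}$. Concretely, if $y(x,\la)$ solves $y''' + (\tau_1 y)' + \tau_1 y' = \la y$, then $\overline{y(x,\la)}$ solves the same equation with $\overline{\la}$. The boundary conditions of $\mathcal L_1$, $\mathcal L_2$, $\mathcal M_1$, $\mathcal M_2$ are real (conditions on $y^{(j)}(0)$, $y^{(j)}(1)$ equal to zero). Hence the spectra of each of these problems are invariant under $\la \mapsto \overline{\la}$.

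To keep track of multiplicities, I would use the characteristic functions. The solutions $\mathcal C_k(x,\la)$ have real initial data, so $\mathcal C_k(x,\overline{\la}) = \overline{\mathcal C_k(x,\la)}$. Consequently, by \eqref{defDelta}, each $\Delta_{j,k}$ satisfies $\Delta_{j,k}(\overline{\la}) = \overline{\Delta_{j,k}(\la)}$. The zeros of the entire functions $\Delta_{1,1}$ and $\Delta_{2,1}$ are $\{\la_l\}$ and $\{\mu_l\}$, counted with multiplicities. A function with this real-symmetry has its zero set, with multiplicities, symmetric about the real axis. This gives the claim for $\{\mu_l\}$ and the existence of the index $p(l)$ with $\la_{p(l)} = \overline{\la_l}$; simplicity of the $\la_l$ (A-1) makes $p(l)$ well defined.

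For the weights, I would use \eqref{beDe}: $\be_l = -\Delta_{2,1}(\la_l)/\dot\Delta_{1,1}(\la_l)$. Differentiating the symmetry relation gives $\dot\Delta_{1,1}(\overline{\la}) = \overline{\dot\Delta_{1,1}(\la)}$. Therefore $\be_{p(l)} = \overline{\be_l}$. Alternatively, one can note $M_1(\overline{\la}) = \overline{M_1(\la)}$ since $\Phi_1(x,\overline{\la}) = \overline{\Phi_1(x,\la)}$ by uniqueness of the Weyl solution, and then take residues.

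Finally, the reality for large $l$ comes from the asymptotics \eqref{asymptla3} with $\sigma = 0$, since $\tau_0 \equiv 0$. The leading terms are real, so $\mathrm{Im}\,\la_l = \mathrm{Im}\,\varkappa_l \to 0$. Meanwhile $\la_{l+1} - \la_l \sim C l^2 \to \infty$. For large $l$, the conjugate $\overline{\la_l}$ must equal some $\la_{p(l)}$, and it lies within $o(1)$ of $\la_l$ while every other eigenvalue is far away. Thus $p(l) = l$, hence $\la_l$ is real and $\be_l = \be_{p(l)} = \overline{\be_l}$ is real. For $\mu_l$, the same argument uses \eqref{asymptmu3}, the $l^2$-separation, and the multiplicity-symmetric conjugation invariance. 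For large $l$, $\mu_l$ is isolated in a small disc that is symmetric under conjugation and contains exactly one zero, so that zero is real.

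The main obstacle I expect is in this last step: making precise that only finitely many eigenvalues can pair nontrivially. I would do it with a counting/Rouché argument in discs of fixed radius around the real points $(\rho_l^{\pm})^3 - 2\theta\rho_l^{\pm} + \text{const}$, rather than relying on the indexing alone.
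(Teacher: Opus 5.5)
Your proposal is correct and follows essentially the paper's argument. The paper's one-line proof uses three ingredients, and you use the same three:
\begin{itemize}
\item conjugation invariance of the real-coefficient equation \eqref{eqv3}, which you route through $\Delta_{1,1}$ and $\Delta_{2,1}$; this cleanly handles the multiplicities in $\{\mu_l\}$;
\item the residue formula for the weights, which you take via \eqref{beDe}, with the Weyl-function residue \eqref{defbe} (the paper's choice) as your stated alternative;
\item the asymptotics \eqref{asymptla3} and \eqref{asymptmu3} with $\sigma = 0$, to get reality for large $l$.
\end{itemize}
The Rouch\'e/counting step you flag as the main obstacle is not actually needed. Those asymptotic formulas hold for all $l \ge 1$ with bounded ($l_2$) remainders, and this alone separates $\la_l$ (resp.\ $\mu_l$) from every other eigenvalue once $l$ is large.
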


\begin{proof}
Complex conjugation of equation \eqref{eqv3} and the formula \eqref{defbe} for the weight numbers, taking the asymptotics \eqref{asymptla3} and \eqref{asymptmu3} into account.
\end{proof}

\begin{thm} \label{thm:30}
Let a sequence $\{ \la_l, \be_l \}_{l \ge 1}$ satisfies the hypothesis of Theorem~\ref{thm:char3} with $\sigma = 0$ and the corresponding conclusions of Lemma~\ref{lem:30}. Then $\{ \la_l, \be_l \}_{l \ge 1}$ are the spectral data of a unique vector $\tau = \{ 0, \tau_1 \} \in \mathbf{W}^+_{simp}$ (i.e. $\tau_0 \equiv 0$).
\end{thm}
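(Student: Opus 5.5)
By Theorem~\ref{thm:char3}, the sequence $\{ \la_l, \be_l \}_{l \ge 1}$ (with $\sigma = 0$) is the spectral data of a unique $\tau = \{ \tau_0, \tau_1 \} \in \mathbf{W}^+_{simp}$. Here $\tau_1$ is real and $\tau_0$ is purely imaginary. It remains to show that $\tau_0 \equiv 0$. The plan is a symmetry-plus-uniqueness argument.

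Take complex conjugates in \eqref{eqv3}. If $y$ solves $y''' + (\tau_1 y)' + \tau_1 y' + \tau_0 y = \la y$, then $\overline{y}$ solves the same equation with coefficients $\{ \overline{\tau_0}, \tau_1 \} = \{ -\tau_0, \tau_1 \}$ and spectral parameter $\overline{\la}$. Set $\hat \tau := \{ -\tau_0, \tau_1 \}$. This vector again lies in $\mathbf{W}^+$.

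The boundary conditions \eqref{bcPhi} are real. Hence the Weyl solutions satisfy $\hat\Phi_k(x,\la) = \overline{\Phi_k(x,\overline{\la})}$, and so $\hat M_k(\la) = \overline{M_k(\overline{\la})}$. Consequently the eigenvalues of $\hat{\mathcal L}_k$ are $\{ \overline{\la_{l,k}} \}$, and the corresponding residues are $\overline{\be_{l,k}}$.

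Now use the hypothesis, namely the conclusion of Lemma~\ref{lem:30} imposed on the data. It says that the map $l \mapsto p(l)$ with $\la_{p(l)} = \overline{\la_l}$ and $\be_{p(l)} = \overline{\be_l}$ exists. This map is a bijection of $\mathbb N$: it is an involution, because the $\la_l$ are distinct. Therefore the set $\{ \overline{\la_l}, \overline{\be_l} \}_{l \ge 1}$ coincides with $\{ \la_l, \be_l \}_{l \ge 1}$ up to reindexing.

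By \eqref{sa}, the data for $k = 2$ are $\la_{l,2} = -\overline{\la_l}$ and $\be_{l,2} = -\overline{\be_l}$, for both $\tau$ and $\hat \tau$. So the full spectral data $\{ \la_{l,k}, \be_{l,k} \}_J$ of $\hat \tau$ coincide with those of $\tau$. Moreover, $\hat \tau \in \mathbf{W}_{simp}$, since conditions (A-1) and (A-2) are preserved under conjugation.

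Uniqueness of Inverse Problem~\ref{ip:main} in $\mathbf{W}_{simp}$ then gives $\hat \tau = \tau$. Thus $-\tau_0 = \tau_0$, i.e. $\tau_0 \equiv 0$. Uniqueness of such a vector follows from the uniqueness part of Theorem~\ref{thm:char3}.

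The main obstacle is bookkeeping rather than analysis, in two places.

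First, one must verify that conjugation maps the Weyl function, and hence the residues $\be_{l,k}$, to their conjugates with the correct ordering. The spectral data are defined as sets with the chosen reindexing, so uniqueness must be applied to the unordered data.

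Second, one must make sure the $k = 2$ component is consistent. It should be checked that $\la_{l,2}^{\hat\tau} = \overline{\la_{p(l),2}}$ agrees with $-\overline{\la_{l}}$ up to the same permutation $p$. This follows by combining \eqref{sa} for $\hat\tau$ with the $k = 1$ identification. Since the $\la_l$ are distinct and only finitely many are nonreal, the permutation $p$ is harmless.
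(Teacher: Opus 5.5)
Your proof is correct and follows the paper's own argument. The paper also takes $\tau$ from Theorem~\ref{thm:char3} and observes that the conjugate vector $\{\overline{\tau_0}, \tau_1\} = \{-\tau_0, \tau_1\}$ has the same spectral data, because of the symmetry assumed from Lemma~\ref{lem:30}; uniqueness then gives $\tau_0 \equiv 0$, and your extra bookkeeping (the identity $\hat M_k(\la) = \overline{M_k(\overline{\la})}$, the involution $p$, and the consistency of the $k = 2$ data) simply spells out the step the paper calls easy to check.
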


\begin{proof}
Theorem~\ref{thm:char3} implies the existence of a vector $\tau = \{ \tau_0, \tau_1 \} \in \mathbf{W}^+_{simp}$, whose  spectral data are the given $\{ \la_l, \be_l \}_{l \ge 1}$. One can easily check that the spectral data of $\overline{\tau} = \{\overline{\tau}_0, \tau_1 \}$ also equal $\{ \la_l, \be_l \}_{l \ge 1}$. By virtue of the uniqueness and $\overline{\tau}_0 = -\tau_0$, we conclude that $\tau_0 \equiv 0$.
\end{proof}

Analogously, we get the sufficient conditions on the two spectra:

\begin{thm} 
Let a sequence $\{ \la_l, \mu_l \}_{l \ge 1}$ satisfies the hypothesis of Theorem~\ref{thm:char3sp} with $\sigma = 0$ and the corresponding conclusions of Lemma~\ref{lem:30}. Then $\{ \la_l, \mu_l \}_{l \ge 1}$ are the eigenvalues of the respective problems $\mathcal L_1$ and $\mathcal M_1$ for a unique vector $\tau = \{ 0, \tau_1 \} \in \mathbf{W}^+_{simp}$.
\end{thm}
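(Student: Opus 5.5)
The plan mirrors the proof of Theorem~\ref{thm:30}, with the two-spectra existence result Theorem~\ref{thm:char3sp} in place of Theorem~\ref{thm:char3}, and uniqueness for Inverse Problem~\ref{ip:sp} in place of uniqueness for Inverse Problem~\ref{ip:main}.

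\emph{Step 1 (existence).} The given sequence $\{ \la_l, \mu_l \}_{l \ge 1}$ satisfies the hypothesis of Theorem~\ref{thm:char3sp} with $\sigma = 0$. Hence there is a unique vector $\tau = \{ \tau_0, \tau_1 \} \in \mathbf{W}^+_{simp}$ for which $\{ \la_l \}$ and $\{ \mu_l \}$ are the eigenvalues of $\mathcal L_1$ and $\mathcal M_1$, with $\tau_1$ real-valued and $\tau_0$ purely imaginary. By \eqref{sa} and \eqref{sasp}, the spectra of $\mathcal L_2$ and $\mathcal M_2$ are then $\{ -\overline{\la_l} \}$ and $\{ -\overline{\mu_l} \}$. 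Relation \eqref{const3} gives $\int_0^1 \tau_0\,dx = \sigma = 0$, but this alone does not force $\tau_0 \equiv 0$, so a symmetry argument is needed.

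\emph{Step 2 (conjugate coefficients).} Set $\hat\tau := \{ \overline{\tau_0}, \tau_1 \} = \{ -\tau_0, \tau_1 \}$, which again lies in $\mathbf{W}^+$. Complex conjugation of \eqref{eqv3} maps solutions of $\ell_3(y) = \la y$ with coefficients $\tau$ to solutions of the same equation with coefficients $\hat\tau$ and spectral parameter $\overline{\la}$. The boundary conditions of $\mathcal L_1$ and $\mathcal M_1$ are real, so conjugation preserves them. Therefore the eigenvalues of $\mathcal L_1$ and $\mathcal M_1$ for $\hat\tau$ are $\{ \overline{\la_l} \}$ and $\{ \overline{\mu_l} \}$, with multiplicities. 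The assumed conclusions of Lemma~\ref{lem:30} say that both sets are invariant under conjugation, counting multiplicities. Hence $\hat\tau$ has the same eigenvalue set $\{ \la_l, \mu_l \}$, up to reindexing, and also the same $\mathcal L_2$, $\mathcal M_2$ spectra via \eqref{sasp}.

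\emph{Step 3 (uniqueness).} To conclude $\hat\tau = \tau$, I need $\hat\tau \in \mathbf{W}_{simp}$, so that uniqueness for Inverse Problem~\ref{ip:sp} in this class applies. This is the main point to verify, but it is immediate. Conditions (A-1) and (A-2) concern only the eigenvalues of the $\mathcal L_k$. For $\hat\tau$ these are the same sets as for $\tau$: for $\mathcal L_2$, apply \eqref{sa} to $\hat\tau$ and use the conjugation invariance again. So $\hat\tau \in \mathbf{W}^+_{simp}$. Uniqueness then gives $\hat\tau = \tau$, hence $-\tau_0 = \tau_0$ and $\tau_0 \equiv 0$. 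Uniqueness of the vector itself follows from Theorem~\ref{thm:char3sp}.
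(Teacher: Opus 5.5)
Your proposal is correct and follows the same route as the paper, which proves this theorem analogously to Theorem~\ref{thm:30}. Existence of $\tau=\{\tau_0,\tau_1\}\in\mathbf{W}^+_{simp}$ comes from Theorem~\ref{thm:char3sp}, complex conjugation of \eqref{eqv3} shows that $\{\overline{\tau_0},\tau_1\}$ has the same eigenvalue set, and uniqueness together with $\overline{\tau_0}=-\tau_0$ gives $\tau_0\equiv 0$. Your explicit check that the conjugated vector lies in $\mathbf{W}_{simp}$ is a detail the paper leaves implicit, and it is sound: the relations \eqref{sa}, \eqref{sasp} you use there hold for any vector in $\mathbf{W}^+$, or follow directly from conjugating the real boundary conditions of $\mathcal L_2$ and $\mathcal M_2$.
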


The uniform stability of the inverse problems in the special case $\tau_0 \equiv 0$ holds due to Theorems~\ref{thm:uni3} and~\ref{thm:uni3sp}.

\section{Case $n = 4$} \label{sec:4}

For $n = 4$, we have the equation
\begin{equation} \label{eqv4}
y^{(4)} + (\tau_2(x)y')' + (\tau_1(x)y)' + \tau_1(x)y' + \tau_0(x)y = \la y, \quad x \in (0,1),
\end{equation}
where $\tau = \{ \tau_0, \tau_1, \tau_2 \} \in \mathbf{W}^+$ means that the functions $\tau_0 \in L_2[0,1]$ and $\tau_2 \in W_2^2[0,1]$ are real-valued and $\tau_1 \in W_2^1[0,1]$ is purely imaginary. For the recovery of $\tau_0$, $\tau_1$, and $\tau_2$, we use the eigenvalues of the boundary value problems $\mathcal L_k$ ($k = 1, 2, 3$) for equation \eqref{eqv4} with the boundary conditions
\begin{align} \label{bc1}
\mathcal L_1 \colon & \quad y(0) = 0, \quad y(1) = y'(1) = y''(1) = 0, \\ \label{bc2}
\mathcal L_2 \colon & \quad y(0) = y'(0) = 0, \quad y(1) = y'(1) = 0, \\ \nonumber
\mathcal L_3 \colon & \quad y(0) = y'(0) = y''(0) = 0, \quad y(1) = 0.
\end{align}

Note that $\mathcal L_2 = \mathcal L_2^*$ and $\mathcal L_1 = \mathcal L_3^*$.

The following theorem provides the spectral data characterization for $\tau \in \mathbf{W}_{simp}^+$ in the case $n = 4$.

\begin{thm} \label{thm:char4}
For a sequence $\{ \la_{l,k}, \be_{l,k} \}_{l \ge 1, \, k = 1, 2, 3}$ to be the spectral data of $\tau = \{ \tau_0, \tau_1, \tau_2 \} \in \mathbf{W}_{simp}^+$, it is necessary and sufficient to fulfill the conditions (A-1), (A-2), \eqref{sa}, $\be_{l,1} \ne 0$, $\be_{l,2} < 0$, and the asymptotics
\begin{multline} \label{asymptla41}
\la_{l,2\pm1} = -\Biggl( \biggl( \sqrt 2 \pi l + \frac{\pi}{2 \sqrt 2}\biggr)^4 - \theta \biggl( \sqrt 2 \pi l + \frac{\pi}{2 \sqrt 2}\biggr)^2 + \biggl( \frac{\theta_0 + \theta_1}{\sqrt 2} \mp 2 \sqrt 2 \sigma \biggr)\biggl( \sqrt 2 \pi l + \frac{\pi}{2 \sqrt 2}\biggr) \\ + \frac{\theta_1' - \theta_0'}{4} \pm 2 (\sigma_0 + \sigma_1) + \phi + \frac{\theta^2}{8} + \varkappa_{l,2\pm1} \Biggr),
\end{multline}
\vspace*{-0.8cm}
\begin{multline} \label{asymptla42}
\la_{l,2} = \biggl( \pi l + \frac{\pi}{2}\biggr)^4 - \theta \biggl( \pi l + \frac{\pi}{2}\biggr)^2 + (\theta_0 + \theta_1) \biggl( \pi l + \frac{\pi}{2}\biggr) - \frac{3}{4}(\theta_1' - \theta_0') - \phi + \frac{\theta^2}{8} + \varkappa_{l,2},   
\end{multline}
\vspace{-0.5cm}
\begin{align} \label{asymptbe41}
& \be_{l,2\pm1} = -4 \la_{l,2\pm 1} \left(1 + \frac{\theta + \theta_0}{8\bigl( \pi l + \frac{\pi}{4}\bigr)^2} - \frac{2(\theta_0 + \theta_1) - \theta_0' \mp 4 (2 \sigma + \sigma_0)}{16 (\pi l)^3} + \frac{\eta_{l,2\pm1}}{l^3} \right), \\ \label{asymptbe42}
& \be_{l,2} = -4 \la_{l,2} \left(1 + \frac{\theta + \theta_0}{4\bigl( \pi l+ \frac{\pi}{2} \bigr)^2} + \frac{\theta_0' - 2(\theta_0 + \theta_1)}{4 (\pi l)^3} + \frac{\eta_{l,2}}{l^3} \right),
\end{align}
where $\{ \varkappa_{l,k} \}, \, \{ \eta_{l,k} \} \in l_2$ and
\begin{equation} \label{const4}
\def\arraystretch{2}
\left.
\begin{array}{c}
\theta = \displaystyle\int_0^1 \tau_2(x) \, dx, \quad \theta_0 = \tau_2(0), \quad \theta_1 = \tau_2(1), \quad \theta_0' = \tau_2'(0), \quad \theta_1' = \tau_2'(1), \\
\sigma = \displaystyle\int_0^1 \tau_1(x) \, dx, \quad \sigma_0 = \tau_1(0), \quad \sigma_1 = \tau_1(1), \quad \phi = \dfrac{1}{8} \displaystyle\int_0^1 \tau_2^2(x) \, dx - \displaystyle\int_0^1 \tau_0(x) \, dx.
\end{array} \: \right\}
\end{equation}
\end{thm}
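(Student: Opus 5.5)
Necessity and sufficiency are handled separately, following the pattern of Theorem~\ref{thm:char3} for $n=3$.

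\emph{Necessity.} Let $\tau \in \mathbf{W}_{simp}^+$. Conditions (A-1), (A-2) hold by the definition of $\mathbf{W}_{simp}$. The symmetry \eqref{sa} and the sign condition \eqref{addhyp1} with $p=2$, which reads $\be_{l,2}<0$, hold for spectral data of any $\tau\in\mathbf{W}_{simp}^+$, as recalled after Definition~\ref{def:S+}. The condition $\be_{l,1}\ne 0$ is \eqref{defbe}. It remains to derive the asymptotics \eqref{asymptla41}--\eqref{asymptbe42} with constants given by \eqref{const4}. This is the analog of Theorem~\ref{thm:asymptsd3} and would go in Appendix~\ref{app:calc}. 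The Weyl solutions (equivalently the characteristic determinants $\Delta_{k,k}$, $\Delta_{k+1,k}$ of Section~\ref{sec:sp}) are expanded in terms of the Birkhoff solutions of Proposition~\ref{prop:Birk}, with the correction terms computed up to order $\rho^{-3}$ for $\tau_2\in W_2^2$, $\tau_1\in W_2^1$, $\tau_0\in L_2$. The expansion is then substituted into $\Delta_{k,k}(\la)=0$ in the sectors containing the roots, and the equation is solved iteratively for $\rho_{l,k}$. The weight numbers come from \eqref{beDe}. The correction coefficients involve integrals of $\tau_2$, $\tau_2^2$, $\tau_0$, $\tau_1$ and boundary values of $\tau_2$, $\tau_2'$, $\tau_1$, which is where the constants \eqref{const4} come from. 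This symbolic bookkeeping, done with computer algebra following \cite{Nai68, Yur22}, is the main obstacle. It is routine in principle but long, especially the cubic-order terms in \eqref{asymptbe41} that carry $\sigma$, $\sigma_0$, $\theta_0'$.

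\emph{Sufficiency.} Take the given data with arbitrary constants $\theta,\theta_0,\theta_1,\theta_0',\theta_1',\phi$ real and $\sigma,\sigma_0,\sigma_1$ purely imaginary. These reality conditions are forced by \eqref{sa}, comparing the asymptotics of $\la_{l,1}$ and $\overline{\la_{l,3}}$ and using the realness of $\la_{l,2}$. Construct a model vector $\tilde\tau\in\mathbf{W}^+$ realizing these constants:
\begin{itemize}
\item take $\tilde\tau_2$ a real polynomial of degree at most $4$ with prescribed integral, endpoint values and endpoint derivatives;
\item take $\tilde\tau_1$ an imaginary quadratic with prescribed integral and endpoint values, as in \eqref{mod3};
\item take $\tilde\tau_0$ a real constant chosen so that $\phi=\tfrac18\int\tilde\tau_2^2-\tilde\tau_0$.
\end{itemize}
By the necessity part, the spectral data of $\tilde\tau$ have exactly the same leading asymptotic terms. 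Hence the given sequence satisfies $\Xi<\infty$ in \eqref{defXi}, since the remainders differ by $l_2$ sequences and the weights are matched via the factor $l^{-1}$ against $\la_{l,k}\,\eta_{l,k}/l^3$.

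Together with (A-1), (A-2), \eqref{sa} and \eqref{addhyp1}, this shows the sequence lies in $\mathcal S^+(\tilde\tau)$. Since $\tilde\tau$ need not lie in $\mathbf{W}_{simp}^+$, Remark~\ref{rem:nosep} lets us replace it by a model vector in $\mathbf{W}_{simp}^+$ with the same asymptotic constants. Proposition~\ref{prop:sc} then yields a unique $\tau\in\mathbf{W}_{simp}^+$ with these spectral data, and necessity applied to $\tau$ identifies its constants via \eqref{const4}.
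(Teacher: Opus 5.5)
Your proposal matches the paper's proof essentially step by step. Necessity comes from the Birkhoff-solution asymptotics of Appendix~\ref{app:calc4} plus the structural properties from \cite{Bond24}. Sufficiency comes from an explicit polynomial model vector $\tilde\tau \in \mathbf{W}^+$ matching the nine constants, then membership in $\mathcal S^+(\tilde\tau)$, Remark~\ref{rem:nosep}, and Proposition~\ref{prop:sc}.

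One small correction to your side remark. The symmetry of the $\la$'s alone forces only $\theta$, $\theta_0+\theta_1$, $\theta_1'-\theta_0'$ and $\phi$ to be real, and $\sigma$, $\sigma_0+\sigma_1$ to be imaginary. To separate the individual constants you also need the reality of $\be_{l,2}$ and the relation $\be_{l,1} = \overline{\be_{l,3}}$. The paper sidesteps this by simply assuming their reality in the sufficiency part.
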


In the sufficiency part of Theorem~\ref{thm:char4}, 
the constants $\theta$, $\theta_0$, $\theta_1$, $\theta_0'$, $\theta_1'$, and $\phi$ can be arbitrary real numbers and the constants $\sigma$, $\sigma_0$, and $\sigma_1$, arbitrary purely imaginary complex numbers. Then the relations \eqref{const4} are a part of the conclusion of the theorem.

\begin{proof}[Proof of Theorem~\ref{thm:char4}]
\textit{Necessity}. The asymptotic formulas \eqref{asymptla41}--\eqref{asymptbe42} are proved in Appendix~\ref{app:calc4} for every $\tau \in \mathbf{W}$. The other properties have been obtained in \cite{Bond24}.

\smallskip

\textit{Sufficiency}. Choose a model problem $\tilde \tau = \{ \tilde \tau_0, \tilde \tau_1, \tilde \tau_2 \} \in \mathbf{W}^+$ such that $(\tilde\theta, \tilde\theta_0, \tilde\theta_1, \tilde\theta_0', \tilde\theta_1', \tilde\sigma, \tilde\sigma_0, \tilde\sigma_1, \tilde\phi) =(\theta, \theta_0, \theta_1, \theta_0', \theta_1', \sigma, \sigma_0, \sigma_1, \phi)$ according to \eqref{const4}. For instance, put
\begin{align*}
\tilde\tau_2(x) := & \bigl(30 \theta - 15\theta_0 - 15 \theta_1 - \tfrac{5}{2}\theta_0' + \tfrac{5}{2} \theta_1' \bigr) x^4 
+ \bigl( -60 \theta + 32 \theta_0 + 28 \theta_1 + 6 \theta_0' - 4 \theta_1'\bigr) x^3 \\
& + \bigl( 30 \theta - 18 \theta_0 - 12 \theta_1 - \tfrac{9}{2} \theta_0' + \tfrac{3}{2} \theta_1'\bigr) x^2 + \theta_0' x + \theta_0, \\
\tilde\tau_1(x) := & (3 \sigma_0 + 3 \sigma_1 - 6\sigma) x^2 + (6 \sigma - 4 \sigma_0 - 2 \sigma_1) x + \sigma_0, \\
\tilde \tau_0(x) := & \frac{1}{8} \int_0^1 \tilde \tau_2^2(x) \, dx - \phi.
\end{align*}
Then the conditions of Theorem~\ref{thm:char4} imply $\{ \la_{l,k}, \be_{l,k} \} \in \mathcal S^+(\tilde \tau)$. Proposition~\ref{prop:sc} and Remark~\ref{rem:nosep} conclude the proof.
\end{proof}

Theorems~\ref{thm:uni} and~\ref{thm:char4} together imply the uniform stability of Inverse Problem~\ref{ip:main}.
For clarity, we formulate the corresponding theorem by using the spectral data $\{ \la_{l,k}, \be_{l,k} \}$ only for $k = 1, 2$, since $\la_{l,3}$ and $\be_{l,3}$ are uniquely specified by \eqref{sa}.

\begin{thm} \label{thm:uni4}
Let $\Omega > 0$ and $\de > 0$ be fixed, and let each of two sequences $\{ \la_{l,k,(1)}, \be_{l,k,(1)} \}_{l \ge 1, \, k = 1, 2}$ and
$\{ \la_{l,k,(2)}, \be_{l,k,(2)} \}_{l \ge 1, \, k = 1, 2}$ satisfy the conditions
\begin{gather*}
|\la_{l,k_1} - \la_{s,k_2}| \ge \de, \quad (l,k_1) \ne (s,k_2), \quad
|\be_{l,1}|\ge \de, \quad
l,s\ge 1, \: k_1,k_2 \in \{ 1, 2 \}, \\
\la_{l,2}, \, \be_{l,2} \in \mathbb R, \quad
-\be_{l,2} \ge \de, \quad l \ge 1,
\end{gather*}
 and the asymptotics of Theorem~\ref{thm:char4} with the same real constants $\theta$, $\theta_0$, $\theta_1$, $\theta_0'$, $\theta_1'$, $\phi$, imaginary constants $\sigma$, $\sigma_0$, $\sigma_1$, and with remainders satisfying 
$$
\| \{ \varkappa_{l,k} \}_{l \ge 1} \| \le \Omega, \quad
\| \{ \eta_{l,k} \}_{l \ge 1} \| \le \Omega, \quad k = 1, 2.
$$
Then the estimates \eqref{esttau} hold for $\nu = 0, 1, 2$.
\end{thm}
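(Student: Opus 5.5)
The plan is to derive Theorem~\ref{thm:uni4} from Theorem~\ref{thm:uni}, using a single model vector $\tilde \tau$ built from the common constants. Given the common real constants $\theta,\theta_0,\theta_1,\theta_0',\theta_1',\phi$ and imaginary $\sigma,\sigma_0,\sigma_1$, I take $\tilde\tau\in\mathbf{W}^+$ to be the polynomial model constructed in the sufficiency part of the proof of Theorem~\ref{thm:char4}. By the necessity part of Theorem~\ref{thm:char4} (Appendix~\ref{app:calc4}), its spectral data $\{\tilde\la_{l,k},\tilde\be_{l,k}\}$ satisfy \eqref{asymptla41}--\eqref{asymptbe42} with the same constants and with some $l_2$ remainders $\tilde\varkappa_{l,k},\tilde\eta_{l,k}$. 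If $\tilde\tau\notin\mathbf{W}^+_{simp}$, I replace it by $\tilde{\tilde\tau}$ as in Remark~\ref{rem:nosep}; this has the same asymptotic constants, and $\tilde{\tilde\tau}$ depends only on the constants, not on the two sequences.

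Next, for $u=1,2$, I complete each sequence by setting $\la_{l,3,(u)}:=\overline{\la_{l,1,(u)}}$ and $\be_{l,3,(u)}:=\overline{\be_{l,1,(u)}}$, as forced by \eqref{sa} with $n=4$, and check membership in $\mathcal S^+_{\Omega_1,\de_1}(\tilde\tau)$ with $\Omega_1=C\Omega$ and $\de_1=\de$:
\begin{itemize}
\item Condition \eqref{sa} holds by construction, with $\la_{l,2},\be_{l,2}$ real.
\item Condition \eqref{addhyp1} with $p=2$ reads $-\be_{l,2}>0$, which is assumed.
\item The separation conditions \eqref{bbound1} for $k=1,2$ are assumed directly. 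Those involving $k=3$ follow by conjugation: $|\la_{l,3}-\la_{s,3}|=|\la_{l,1}-\la_{s,1}|$, $|\la_{l,2}-\la_{s,3}|=|\la_{l,2}-\la_{s,1}|$ since $\la_{l,2}$ is real, and $|\be_{l,3}|=|\be_{l,1}|$. The bound $|\be_{l,2}|\ge\de$ follows from $-\be_{l,2}\ge\de$.
\item Assumptions (A-1) and (A-2) follow from these separations.
\end{itemize}

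For the bound $\Xi\le\Omega_1$, the differences with the model data come from the asymptotics. I have $\la_{l,k}-\tilde\la_{l,k}=\pm(\varkappa_{l,k}-\tilde\varkappa_{l,k})$. Since the bracket in \eqref{asymptbe41}--\eqref{asymptbe42} is the same up to $l^{-3}\eta_{l,k}$, I get
\[
\be_{l,k}-\tilde\be_{l,k}=-4(\la_{l,k}-\tilde\la_{l,k})(1+O(l^{-2}))-4\tilde\la_{l,k}\,l^{-3}(\eta_{l,k}-\tilde\eta_{l,k})(1+o(1)).
\]
Since $|\tilde\la_{l,k}|\le Cl^4$, this gives $l^{-1}|\be_{l,k}-\tilde\be_{l,k}|\le C(|\varkappa_{l,k}-\tilde\varkappa_{l,k}|+|\eta_{l,k}-\tilde\eta_{l,k}|)$. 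Hence $\Xi\le C(\Omega+\|\{\tilde\varkappa\}\|+\|\{\tilde\eta\}\|)=:\Omega_1$, with $C$ depending only on the fixed constants. The same computation, applied to the two sequences with each other, gives $Z\le C\,Z_1$, where $Z_1$ is the remainder-difference quantity built as in Theorem~\ref{thm:uni3}. This is where I will read the estimate \eqref{esttau} as being in terms of the remainder differences, equivalently $Z$.

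Then Theorem~\ref{thm:uni} with $(\Omega_1,\de)$ yields \eqref{esttau} for $\nu=0,1,2$. The main obstacle is to make the dependence of the constants honest: $\Omega_1$, and hence $C$, depend on the model vector, so they depend on the fixed asymptotic constants as well as on $\Omega$ and $\de$. I will state that these constants are regarded as fixed, as the theorem's hypothesis ``with the same constants'' indicates. A further point to check is that the $o(1)$ factor in the $\be$ expansion is uniform; this holds because the $\eta$ remainders are bounded by $\Omega$ in $l_2$.
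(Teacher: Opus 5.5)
Your proposal is correct and follows the paper's route. The paper simply states that Theorem~\ref{thm:uni} combined with Theorem~\ref{thm:char4} gives the result: take the polynomial model from the sufficiency part of Theorem~\ref{thm:char4} (adjusted via Remark~\ref{rem:nosep}), complete the data by \eqref{sa}, verify membership in $\mathcal S^+_{\Omega_1,\de}(\tilde\tau)$, and apply Theorem~\ref{thm:uni}. Your checks of the separation, sign and $\Xi$ conditions supply exactly the details the paper leaves implicit; note that the factor $(1+o(1))$ in your $\be$-difference identity is in fact exactly $1$.
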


Consider the special case $\tau_1 \equiv 0$ arising in applications (see \cite{Bar74, Glad05}). Similarly to Lemma~\ref{lem:30} and Theorem~\ref{thm:30}, we obtain the following result.

\begin{thm} \label{thm:40}
For a sequence $\{ \la_{l,k}, \be_{l,k} \}_{l \ge 1, \, k = 1, 2, 3}$ to be the spectral data of $\tau = \{ \tau_0, 0, \tau_2 \} \in \mathbf{W}_{simp}^+$ (i.e. $\tau_1 \equiv 0$), it is necessary and sufficient to fulfill the conditions of Theorem~\ref{thm:char4} with $\sigma = \sigma_0 = \sigma_1 = 0$ and the following additional condition. For each $l \ge 1$, there exists an index $p(l)$ such that $\la_{p(l),1} = \overline{\la_{l,1}}$ and $\be_{p(l), 1} = \overline{\be_{l,1}}$. In particular, for all sufficiently large $l$, the values $\la_{l,1}$ and $\be_{l,1}$ are real.
\end{thm}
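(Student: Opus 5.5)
Both parts follow the scheme of Lemma~\ref{lem:30} and Theorem~\ref{thm:30}, with Theorem~\ref{thm:char4} as the basic characterization. The key observation is a conjugation symmetry that exists only when $\tau_1 \equiv 0$. For $\tau = \{ \tau_0, \tau_1, \tau_2 \} \in \mathbf{W}^+$ put $\overline{\tau} := \{ \overline{\tau_0}, \overline{\tau_1}, \overline{\tau_2} \} = \{ \tau_0, -\tau_1, \tau_2 \}$, using that $\tau_0, \tau_2$ are real and $\tau_1$ is purely imaginary. Conjugating equation \eqref{eqv4} and the boundary conditions \eqref{bc1}, \eqref{bc2} shows that $y(x,\la)$ solves a problem for $\tau$ if and only if $\overline{y(x,\la)}$ solves the same problem for $\overline{\tau}$ with parameter $\overline{\la}$. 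In particular, $\overline{\Phi_k(x,\overline{\la})}$ is the Weyl solution for $\overline{\tau}$. Hence $\overline{M_k(\overline{\la})}$ is the Weyl function of $\overline{\tau}$, the eigenvalues of $\mathcal L_k$ for $\overline{\tau}$ are $\{ \overline{\la_{l,k}} \}$, and the residues at them are $\overline{\be_{l,k}}$.

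\emph{Necessity.} Let $\tau_1 \equiv 0$. Then $\overline{\tau} = \tau$, and the conditions of Theorem~\ref{thm:char4} hold, with $\sigma = \sigma_0 = \sigma_1 = 0$ by \eqref{const4}. By the symmetry above, the set $\{ \la_{l,1} \}_{l \ge 1}$ is invariant under conjugation. Since the eigenvalues are simple by (A-1), $M_1$ has simple poles, so $\overline{\la_{l,1}} = \la_{p(l),1}$ for a unique $p(l)$ and, comparing residues of $M_1(\la) = \overline{M_1(\overline{\la})}$, we get $\be_{p(l),1} = \overline{\be_{l,1}}$. For the last claim we use \eqref{asymptla41}. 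With $\sigma = \sigma_0 = \sigma_1 = 0$ all constants are real, so $\la_{l,1} = -(\text{real increasing sequence of order } l^4) + \varkappa_{l,1}$ with $\varkappa_{l,1} \to 0$. If $\la_{l,1} \notin \mathbb R$ for some large $l$, then $\la_{p(l),1} = \overline{\la_{l,1}}$ with $p(l) \ne l$, but the real parts of $\la_{l,1}$ for large $l$ are separated by gaps of order $l^3$, which is a contradiction. So $p(l) = l$ and $\la_{l,1}, \be_{l,1} \in \mathbb R$ for all large $l$. The same statements for $k = 3$ follow from \eqref{sa}.

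\emph{Sufficiency.} Under the given conditions, Theorem~\ref{thm:char4} yields a unique $\tau \in \mathbf{W}^+_{simp}$ with the given spectral data. By the symmetry, the spectral data of $\overline{\tau}$ are $\{ \overline{\la_{l,k}}, \overline{\be_{l,k}} \}$. For $k = 1$ these coincide, after the reindexing $l \mapsto p(l)$, with $\{ \la_{l,1}, \be_{l,1} \}$ by hypothesis; here $p$ is a bijection since it is an involution, because conjugating twice returns $\la_{l,1}$ and the eigenvalues are simple. For $k = 2$ the data are real (the problem $\mathcal L_2$ is self-adjoint and $\be_{l,2} < 0$). For $k = 3$ the claim follows from \eqref{sa} and the case $k = 1$. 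Also $\overline{\tau} \in \mathbf{W}^+_{simp}$, since (A-1) and (A-2) are preserved under the conjugation of the spectrum. Since spectral data are understood as sets with the ordering irrelevant (as in the reordering convention of Section~\ref{sec:maineq}), the uniqueness statement of Proposition~\ref{prop:sc} (equivalently, \cite[Theorem~2.5.1]{Yur02}) gives $\overline{\tau} = \tau$. Thus $-\tau_1 = \tau_1$, i.e.\ $\tau_1 \equiv 0$.

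The main delicate point I expect is this identification of $\overline{\tau}$'s data with the given data up to the permutation $p$. It requires checking that uniqueness in $\mathbf{W}_{simp}$ holds for spectral data regarded as unordered sets of pairs for each $k$. It also requires verifying that complex conjugation is compatible with the Weyl function normalization in \eqref{bcPhi}, which it is, since the boundary values $\de_{k,j}$ are real.
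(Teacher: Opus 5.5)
Your proposal is correct and follows essentially the same route as the paper, which proves Theorem~\ref{thm:40} ``similarly to Lemma~\ref{lem:30} and Theorem~\ref{thm:30}''. Necessity comes from complex conjugation of equation \eqref{eqv4} and of the Weyl function, combined with the asymptotics \eqref{asymptla41}; sufficiency comes from existence via Theorem~\ref{thm:char4}, followed by checking that $\overline{\tau} = \{\tau_0, -\tau_1, \tau_2\}$ has the same spectral data and invoking uniqueness. Your explicit treatment of $p$ as an involution, and of the fact that spectral data are identified only up to reordering, fills in details the paper leaves implicit.
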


Note that, under the conditions of Theorem~\ref{thm:40}, there holds $\{ \la_{l,1}, \be_{l,1} \}_{l \ge 1} = \{ \la_{l,3}, \be_{l,3} \}_{l \ge 1}$ in view of \eqref{sa}. 

Proceed to Inverse Problem~\ref{ip:sp} by the eigenvalue set $\{ \la_{l,k}, \mu_{l,k} \}_J$.
For $n = 4$, $\{ \mu_{l,k}\}_{l \ge 1}$ are the eigenvalues of the problems $\mathcal M_k$ ($k = 1, 2, 3$) for equation \eqref{eqv4} with the boundary conditions:
\begin{align} \label{bcmu1}
\mathcal M_1 \colon & \quad y'(0) = 0, \quad y(1) = y'(1) = y''(1) = 0, \\ \nonumber
\mathcal M_2 \colon & \quad y(0) = y''(0) = 0, \quad y(1) = y'(1) = 0, \\ \nonumber
\mathcal M_3 \colon & \quad y(0) = y'(0) = y'''(0) = 0, \quad y(1) = 0.
\end{align}

The following theorem is proved in Appendix~\ref{app:calc4}.

\begin{thm} \label{thm:mu4}
The eigenvalues $\{\mu_{l,k}\}_J$ for $\tau = \{ \tau_0, \tau_1, \tau_2\} \in \mathbf{W}$ satisfy the asymptotics
\begin{multline} \label{asymptmu41}
\mu_{l,2\pm1} = -\Biggl( \bigl(\sqrt 2 \pi l)^4 - \theta \bigl( \sqrt 2 \pi l\bigr)^2 + \biggl( \frac{\theta_1 - \theta_0}{\sqrt 2} \mp 2 \sqrt 2 \sigma \biggr)\bigl( \sqrt 2 \pi l \bigr) \\ \pm \sigma_1 +
\frac{\theta_1' - 3 \theta_0'}{4} + \phi + \frac{\theta^2}{8} + \kappa_{l,2\pm 1} \Biggr),
\end{multline}
\vspace*{-0.8cm}
\begin{multline} \label{asymptmu42}
\mu_{l,2} = \biggl( \pi l + \frac{\pi}{4}\biggr)^4 - \theta \biggl( \pi l + \frac{\pi}{4}\biggr)^2 + \theta_1 \biggl( \pi l + \frac{\pi}{4}\biggr) - \frac{3 \theta_1' - \theta_0'}{4} - \phi + \frac{\theta^2}{8} + \kappa_{l,2},   
\end{multline}
where $\{ \kappa_{l,k}\} \in l_2$ and the constants $\theta$, $\theta_0$, $\theta_1$, $\theta_0'$, $\theta_1'$, $\sigma$, $\sigma_1$, and $\phi$ are defined in \eqref{const4}.
\end{thm}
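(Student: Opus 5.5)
The plan is the standard Birkhoff-solution method of \cite{Nai68, Yur22} used in Appendix~\ref{app:calc4} for $\la_{l,k}$. The characteristic function of $\mathcal M_k$ is $\Delta_{k+1,k}(\la)$, the determinant obtained from $\Delta_{k,k}(\la)$ by replacing $\mathcal C_{k+1}$ with $\mathcal C_k$. Equivalently, $\mu_{l,k}$ are the zeros of the $4\times4$ determinant of the boundary forms \eqref{bcmu1} evaluated on a fundamental system of Birkhoff solutions. Put $\la = \rho^4$ and fix a sector where the roots $\om_1,\dots,\om_4$ of $\om^4 = 1$ are ordered so that $\mbox{Re}(\rho\om_1) \le \dots \le \mbox{Re}(\rho\om_4)$. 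By Proposition~\ref{prop:Birk}, equation \eqref{eqv4} has solutions
$$
y_j(x,\rho) = \exp(\rho\om_j x)\Bigl(1 + \frac{A_{1,j}(x)}{\rho} + \frac{A_{2,j}(x)}{\rho^2} + \frac{A_{3,j}(x)}{\rho^3} + \frac{\varepsilon_j(x,\rho)}{\rho^3}\Bigr).
$$
The coefficients $A_{s,j}$ are found recursively from \eqref{eqv4}; they are integrals of $\tau_2$, $\tau_1$, $\tau_0$, $\tau_2^2$ together with boundary values of $\tau_2$, $\tau_2'$, $\tau_1$. The remainders are $o(1)$ in an $L_2$-averaged sense along the eigenvalue sequence. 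The derivatives $y_j^{(\nu)}$ have analogous expansions with factor $(\rho\om_j)^\nu$.

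Next I substitute into the boundary conditions. For $\mathcal M_1$ the forms are $y'(0)$, $y(1)$, $y'(1)$, $y''(1)$; for $\mathcal M_2$ they are $y(0)$, $y''(0)$, $y(1)$, $y'(1)$. Expanding the determinant, I keep only the dominant exponentials. For $\mathcal M_2$ (the self-adjoint case) the leading combination is $e^{\rho(\om_3+\om_4)}$ times a trigonometric factor in $\rho$ plus corrections. This gives an equation of the form
$$
\cos\Bigl(\rho - \tfrac{\pi}{4}\Bigr) + \frac{a_1}{\rho} + \frac{a_2}{\rho^2} + \frac{a_3}{\rho^3} + \frac{\varkappa}{\rho^3} = 0 .
$$
In it, $\rho$ is taken in the appropriate real direction, and the phase shift $\pi/4$ (in place of $\pi/2$ for $\mathcal L_2$) comes from replacing $y'(0)$ by $y''(0)$. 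For $\mathcal M_{2\pm1}$ the same procedure with complex $\om_j$ gives leading zeros at $\sqrt2\pi l$ (no shift), because exchanging $y(0)$ for $y'(0)$ removes the $\pi/(2\sqrt2)$ phase. By Rouch\'e's theorem each zero then admits the expansion $\rho_l = \rho_l^0 + b_1/\rho_l^0 + b_2/(\rho_l^0)^2 + b_3/(\rho_l^0)^3 + o(l^{-3})$, with an $l_2$ remainder after multiplication by $l^3$. Raising to the fourth power yields \eqref{asymptmu41} and \eqref{asymptmu42}. For $\mathcal M_3$ I would either compute directly or use the adjoint relation $\mathcal M_1^\star$ with $\tau_1 \to -\tau_1$; this accounts for the $\pm$ in front of $\sigma$ and $\sigma_1$.

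I expect the main obstacle to be the bookkeeping of the third-order Birkhoff coefficients $A_{3,j}$ at $x=0$ and $x=1$. The constant terms in \eqref{asymptmu41} and \eqref{asymptmu42} (such as $\frac{\theta_1'-3\theta_0'}{4}$, $\pm\sigma_1$, and $\phi+\theta^2/8$) depend on exact cancellations among them. Following the paper, I would do this step with symbolic computation. As a check, I would verify consistency with the already established $\la_{l,k}$ computation and with the interlacing \eqref{inter} for $k=2$: since $\pi l + \pi/4$ lies between $\pi(l-1)+\pi/2$ and $\pi l+\pi/2$, the interlacing holds. Finally, the remainders $\kappa_{l,k} \in l_2$ follow, as in Appendix~\ref{app:calc4}, from $\varepsilon_j$ being Fourier-type coefficients of $L_2$ functions, for instance of $\tau_0$.
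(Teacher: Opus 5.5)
Your plan follows the paper's own route. You expand $D^+(\rho)$ via the Birkhoff asymptotics \eqref{asympty} with the coefficients \eqref{defq} and reduce it to $d^+(\rho) = r_1^+(\rho) - r_2^+(\rho)\exp(\rho(\om_k - \om_{k+1})) + \eps(\rho)$. You then locate the zeros through the logarithmic relation \eqref{rho21p}, raise them to the fourth power, and obtain $k = 3$ from \eqref{relstar} with $\tau_1 \to -\tau_1$.

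There are two concrete corrections:
\begin{enumerate}
\item For $\mathcal M_2$ the leading part is
\[
(2+2i) - (2-2i)e^{2i\rho} = -4\sqrt 2\, i\, e^{i\rho}\sin(\rho - \pi/4).
\]
So the trigonometric factor is $\sin(\rho - \pi/4)$, not $\cos(\rho - \pi/4)$. The zeros of $\cos(\rho - \pi/4)$ are $\pi l - \pi/4$, which contradicts \eqref{asymptmu42}.
\item A local Rouch\'e expansion does not determine which index $l$ belongs to which zero. You need a counting step to confirm that the numbering starts at $l = 1$. The paper does this by comparison with the explicit characteristic functions for $\tau = \{0,0,0\}$.
\end{enumerate}
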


Theorems~\ref{thm:scsp}, \ref{thm:char4}, and~\ref{thm:mu4} imply the following sufficient conditions for the existence of solution of Inverse Problem~\ref{ip:sp} in the case $n = 4$.

\begin{thm} \label{thm:sc4sp}
Let a sequence $\{ \la_{l,k}, \mu_{l,k}\}_{l \ge 1, k = 1, 2, 3}$ satisfy the conditions (A-1), (A-2), \eqref{sepsp}, \eqref{sasp}, \eqref{inter} and the asymptotics \eqref{asymptla41}, \eqref{asymptla42}, \eqref{asymptmu41}, \eqref{asymptmu42} with real constants $\theta$, $\theta_0$, $\theta_1$, $\theta_0'$, $\theta_1'$, $\phi$, purely imaginary constants $\sigma$, $\sigma_0$, $\sigma_1$, and remainder terms such that $\{ \varkappa_{l,k}\} \in l_2$, $\{ \kappa_{l,k} \ln(l + 1) \} \in l_2$. Then $\{ \la_{l,k}, \mu_{l,k}\}_{l \ge 1, \, k = 1, 2, 3}$ is the eigenvalue set of a unique vector $\tau = \{ \tau_0, \tau_1, \tau_2 \} \in \mathbf{W}_{simp}^+$.
\end{thm}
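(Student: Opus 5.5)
The plan is to deduce Theorem~\ref{thm:sc4sp} from Theorem~\ref{thm:scsp} by exhibiting a model vector $\tilde \tau \in \mathbf{W}^+$ whose eigenvalue set has exactly the prescribed leading asymptotic terms. Following the proofs of Theorems~\ref{thm:char3} and~\ref{thm:char4}, I will take the explicit polynomial model $\tilde \tau = \{\tilde\tau_0, \tilde\tau_1, \tilde\tau_2\}$ built in the sufficiency part of Theorem~\ref{thm:char4}. That is, $\tilde\tau_2$ is the quartic with prescribed $\int_0^1 \tilde\tau_2$, $\tilde\tau_2(0)$, $\tilde\tau_2(1)$, $\tilde\tau_2'(0)$ and $\tilde\tau_2'(1)$; $\tilde\tau_1$ is the purely imaginary quadratic with prescribed $\int_0^1\tilde\tau_1$, $\tilde\tau_1(0)$ and $\tilde\tau_1(1)$; and $\tilde\tau_0$ is the real constant chosen so that $\tilde\phi = \phi$. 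Then all constants in \eqref{const4} computed for $\tilde \tau$ coincide with the given $\theta, \theta_0, \theta_1, \theta_0', \theta_1', \sigma, \sigma_0, \sigma_1, \phi$. Since $\tilde\tau_2$ and $\tilde\tau_0$ are real and $\tilde \tau_1$ is purely imaginary, we get $\tilde\tau \in \mathbf{W}^+$.

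Next, I will apply the necessity part of Theorem~\ref{thm:char4} and Theorem~\ref{thm:mu4} to $\tilde \tau$. These show that $\tilde\la_{l,k}$ and $\tilde\mu_{l,k}$ satisfy \eqref{asymptla41}, \eqref{asymptla42}, \eqref{asymptmu41} and \eqref{asymptmu42} with the same constants and with remainders $\{\tilde\varkappa_{l,k}\}, \{\tilde\kappa_{l,k}\} \in l_2$. Subtracting, I obtain $\la_{l,k} - \tilde\la_{l,k} = \pm(\varkappa_{l,k} - \tilde\varkappa_{l,k})$ and $\mu_{l,k} - \tilde\mu_{l,k} = \pm(\kappa_{l,k} - \tilde\kappa_{l,k})$.

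The one delicate point is the logarithmic weight in \eqref{defTheta}. I need $\{\ln(l+1)(\kappa_{l,k} - \tilde\kappa_{l,k})\} \in l_2$, but the hypothesis controls only $\ln(l+1)\kappa_{l,k}$, while Theorem~\ref{thm:mu4} gives merely $\tilde \kappa_{l,k} \in l_2$. This is the step I expect to be the main obstacle. I will resolve it by observing that the model $\tilde\tau$ is a polynomial, hence $C^\infty$. For smooth coefficients the Birkhoff asymptotics of Proposition~\ref{prop:Birk} can be carried one order further, which gives $\tilde\kappa_{l,k} = O(l^{-1})$, so that $\{\ln(l+1)\tilde\kappa_{l,k}\} \in l_2$. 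If this additional expansion is unavailable, I will instead note that, by Theorem~\ref{thm:mu4} applied to $\tilde\tau$, the remainders are $l_2$-combinations of Fourier-type coefficients of the smooth functions $\tilde\tau_\nu$, and these decay like $O(l^{-1})$. Either way $\Theta < \infty$.

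It remains to check the structural conditions in Definition~\ref{def:E+}. The conditions (A-1), (A-2), \eqref{sepsp}, \eqref{sasp} and \eqref{inter} are assumed in Theorem~\ref{thm:sc4sp}, and \eqref{addhyp2} is void for even $n = 4$. Hence $\{\la_{l,k}, \mu_{l,k}\}_J \in \mathcal E^+(\tilde\tau)$. Theorem~\ref{thm:scsp} requires $\tilde\tau \in \mathbf{W}^+_{simp}$. If the polynomial model violates (A-1) or (A-2), I will replace it, as in Remark~\ref{rem:nosep}, by a finite perturbation of its spectral data. This produces $\tilde{\tilde\tau} \in \mathbf{W}^+_{simp}$ with the same asymptotic coefficients, and I expect the eigenvalues $\mu$ to be affected only up to a sequence of rapid decay. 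Then Theorem~\ref{thm:scsp} yields existence and uniqueness of $\tau \in \mathbf{W}^+_{simp}$ with the given eigenvalue set.
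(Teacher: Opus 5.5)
Your proposal is correct and is essentially the paper's argument: the paper derives Theorem~\ref{thm:sc4sp} in one line from Theorem~\ref{thm:scsp} together with Theorems~\ref{thm:char4} and~\ref{thm:mu4}, using the polynomial model vector from the proof of Theorem~\ref{thm:char4} and, implicitly, Remark~\ref{rem:nosep}. Your observation that $\Theta<\infty$ requires $\{\ln(l+1)\tilde\kappa_{l,k}\}\in l_2$, which holds because the polynomial model is smooth and so $\tilde\kappa_{l,k}=O(l^{-1})$, fills in a point the paper leaves unstated. The same smoothness argument also justifies your final ``expectation'': a model obtained from a smooth one by finitely perturbing its spectral data is again smooth, since the main equation reduces to a finite system, and it keeps the same asymptotic constants.
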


Theorems~\ref{thm:unisp} and~\ref{thm:sc4sp} together imply the uniform stability of Inverse Problem~\ref{ip:sp}.
The corresponding theorem is formulated by using the eigenvalues $\{ \la_{l,k}, \mu_{l,k} \}$ only for $k = 1, 2$, since $\la_{l,3}$ and $\mu_{l,3}$ are uniquely specified by \eqref{sasp}.

\begin{thm} \label{thm:uni4sp}
Let $\Omega > 0$ and $\de > 0$ be fixed, and let each of two sequences $\{ \la_{l,k,(1)}, \mu_{l,k,(1)}\}_{l \ge 1\, k = 1, 2}$ and $\{ \la_{l,k,(2)}, \mu_{l,k,(2)}\}_{l \ge 1\, k = 1, 2}$ satisfy 
\begin{gather*}
|\la_{l,k_1} - \la_{s,k_2}| \ge \de, \quad (l,k_1) \ne (s,k_1), \quad l,s \ge 1, \quad k_1, k_2 \in \{ 1, 2 \}, \\
|\la_{l,k}| \ge \de, \quad |\la_{l,k} - \mu_{s,k}| \ge \de, \quad \la_{l,2}, \, \mu_{l,2} \in \mathbb R,
\end{gather*}
the conditions \eqref{inter}
and the asymptotics \eqref{asymptla41}, \eqref{asymptla42}, \eqref{asymptmu41}, \eqref{asymptmu42}
with the same real constants $\theta$, $\theta_0$, $\theta_1$, $\theta_0'$, $\theta_1'$, $\phi$, imaginary constants $\sigma$, $\sigma_0$, $\sigma_1$, and with remainders satisfying 
$$
\| \{ \varkappa_{l,k} \}_{l \ge 1} \|_{l_2} \le \Omega, \quad
\| \{ \kappa_{l,k} \ln(l + 1) \}_{l \ge 1} \|_{l_2} \le \Omega, \quad k = 1, 2.
$$
Then the estimate \eqref{unisp} holds for $\nu = 0, 1, 2$.
\end{thm}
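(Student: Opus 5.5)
The plan is to reduce Theorem~\ref{thm:uni4sp} to the general result Theorem~\ref{thm:unisp}, much as Theorem~\ref{thm:uni4} was deduced from Theorem~\ref{thm:uni}. The first step is to build a single model vector $\tilde \tau \in \mathbf{W}^+$ from the common constants $\theta,\theta_0,\theta_1,\theta_0',\theta_1',\phi,\sigma,\sigma_0,\sigma_1$, using the explicit polynomial formulas for $\tilde\tau_2,\tilde\tau_1,\tilde\tau_0$ from the proof of Theorem~\ref{thm:char4}. By Theorems~\ref{thm:char4} and~\ref{thm:mu4}, the eigenvalues $\tilde\la_{l,k}$ and $\tilde\mu_{l,k}$ satisfy \eqref{asymptla41}, \eqref{asymptla42}, \eqref{asymptmu41}, \eqref{asymptmu42} with the same constants. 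Their remainders $\tilde\varkappa_{l,k}$ and $\tilde\kappa_{l,k}$ depend only on $\tilde\tau$, i.e.\ only on the fixed constants. Since $\tilde\tau$ is smooth, $\{\tilde\kappa_{l,k}\ln(l+1)\}\in l_2$ should hold, because smooth coefficients give faster decay of the remainders. If this decay is not available directly, Remark~\ref{rem:nosep} together with the reasoning of Remark~\ref{rem:sep} lets me replace $\tilde\tau$ by a vector $\tilde{\tilde\tau}\in\mathbf{W}^+_{simp}$ that has the same asymptotic constants and the required separation.

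Next, for $u=1,2$, I complete the data to $k=3$ via \eqref{sasp}: set $\la_{l,3,(u)}=\overline{\la_{l,1,(u)}}$ and $\mu_{l,3,(u)}=\overline{\mu_{l,1,(u)}}$. I then check that each completed sequence lies in $\mathcal E^+_{\Omega_1,\de_1}(\tilde\tau)$.
\begin{itemize}
\item The separation conditions for $k=1,2$ are assumed, and they transfer to $k=3$ by conjugation. For the pair $(k,k+1)=(2,3)$, the reality of $\la_{l,2}$ gives $|\la_{l,2}-\overline{\la_{s,1}}|=|\la_{l,2}-\la_{s,1}|\ge\de$.
\item (A-1), (A-2), \eqref{sepsp} and \eqref{inter} follow from the hypotheses. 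Condition \eqref{addhyp2} is void for even $n$.
\item The bound $\Theta\le\Omega_1(\Omega)$ comes from the triangle inequality, $|\la_{l,k}-\tilde\la_{l,k}|\le|\varkappa_{l,k}|+|\tilde\varkappa_{l,k}|$, and similarly for $\mu$ with the $\ln(l+1)$ weights.
\end{itemize}
Theorem~\ref{thm:sc4sp} (or Theorem~\ref{thm:scsp}) then shows that each completed sequence is the eigenvalue set of a unique $\tau_{(u)}\in\mathbf{W}^+_{simp}$.

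Applying Theorem~\ref{thm:unisp} gives \eqref{unisp} with $C(\Omega_1,\de_1)=C(\Omega,\de)$, since the constants are fixed. The conjugation symmetry bounds the $k=3$ contribution to $X$ by the $k=1$ contribution. The constants are shared between the two sequences, so the polynomial parts of the asymptotics cancel in the differences. Hence $X$ is controlled by the remainder differences, matching the quantity used in Theorem~\ref{thm:unisp}.

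I expect the main obstacle to be the model-vector step. One part is making sure $\tilde\tau$ can be taken in $\mathbf{W}^+_{simp}$ with eigenvalues separated as required in Definition~\ref{def:EOde+}, uniformly for all admissible data. The other is verifying the logarithmic decay of $\tilde\kappa_{l,k}$. For both, I would use the finite-perturbation argument of Remarks~\ref{rem:sep} and~\ref{rem:nosep}, which changes only finitely many model eigenvalues inside a disc whose radius depends on $\Omega$ alone.
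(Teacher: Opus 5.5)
Your proposal is correct and follows the paper's route. The paper obtains this theorem directly from Theorem~\ref{thm:unisp} together with Theorem~\ref{thm:sc4sp}: take the polynomial model vector from the proof of Theorem~\ref{thm:char4}, complete the data to $k=3$ via \eqref{sasp}, and check membership in $\mathcal E^+_{\Omega_1,\de}(\tilde\tau)$, exactly as you do.

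Your two anticipated obstacles are milder than you fear:
\begin{itemize}
\item Smoothness of the polynomial model gives $\tilde\kappa_{l,k}=O(l^{-1})$, so the $\ln(l+1)$-weighted condition holds directly and the Remark~\ref{rem:nosep} fallback (which does not address the $\mu$-remainders anyway) is unnecessary.
\item Separation of the data from the model's eigenvalues is handled inside the proof of Theorem~\ref{thm:uni} via Remark~\ref{rem:sep}, so it is not a hypothesis you must verify when applying Theorem~\ref{thm:unisp}. The only model requirement you need is $\tilde\tau\in\mathbf{W}^+_{simp}$, which Remark~\ref{rem:nosep} supplies.
\end{itemize}
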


Analogously to Theorem~\ref{thm:40}, we obtain sufficient conditions on the eigenvalue set for the special case $\tau_1 \equiv 0$.

\begin{thm} \label{thm:40sp}
Let a sequence $\{ \la_{l,k}, \mu_{l,k}\}_{l \ge 1, \, k = 1, 2, 3}$ satisfy the conditions of Theorem~\ref{thm:sc4sp} with $\sigma = \sigma_0 = \sigma_1 = 0$ and the sets $\{ \la_{l,1}\}_{l \ge 1}$ and $\{ \mu_{l,1}\}_{l \ge 1}$ are symmetric with respect to the real axis. Then $\{ \la_{l,k}, \mu_{l,k}\}_{l \ge 1, \, k = 1, 2, 3}$ is the eigenvalue set of a unique vector $\tau = \{ \tau_0, 0, \tau_2\} \in \mathbf{W}_{simp}$ (i.e. $\tau_1 \equiv 0$).
\end{thm}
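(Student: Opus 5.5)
The argument mirrors Theorem~\ref{thm:30}: first get existence from Theorem~\ref{thm:sc4sp}, then force $\tau_1 \equiv 0$ by a conjugation symmetry combined with uniqueness. By hypothesis the sequence satisfies all conditions of Theorem~\ref{thm:sc4sp}. Hence it is the eigenvalue set of a unique vector $\tau = \{\tau_0,\tau_1,\tau_2\} \in \mathbf{W}_{simp}^+$, with $\tau_0,\tau_2$ real-valued and $\tau_1$ purely imaginary. It remains to show $\tau_1 \equiv 0$.

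Consider $\hat\tau := \{\tau_0, -\tau_1, \tau_2\} = \{\overline{\tau_0}, \overline{\tau_1}, \overline{\tau_2}\}$. Conjugating equation \eqref{eqv4} and the real boundary conditions of $\mathcal L_k$ and $\mathcal M_k$ shows that $y$ is an eigenfunction for $\tau$ with eigenvalue $\la$ if and only if $\overline{y}$ is an eigenfunction for $\hat\tau$ with eigenvalue $\overline{\la}$. The same holds for multiplicities, since the characteristic functions satisfy $\hat\Delta(\la) = \overline{\Delta(\overline\la)}$. So the eigenvalue set of $\hat\tau$ is $\{\overline{\la_{l,k}}, \overline{\mu_{l,k}}\}$, regarded as sets counted with multiplicity. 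We check that this coincides with the original set for each $k$:
\begin{itemize}
\item For $k=1$ this is exactly the symmetry hypothesis.
\item For $k=3$ it follows from \eqref{sasp}: $\{\la_{l,3}\} = \{\overline{\la_{l,1}}\}$, and the latter set is conjugation symmetric; the same argument applies to $\mu_{l,3}$.
\item For $k=2$ the values $\la_{l,2}$ and $\mu_{l,2}$ are real by \eqref{sasp} with $n=4$, so conjugation leaves them unchanged.
\end{itemize}
Clearly $\hat\tau \in \mathbf{W}^+$, and (A-1), (A-2) hold for $\hat\tau$ because its spectra coincide with those of $\tau$. Thus $\hat\tau \in \mathbf{W}_{simp}^+$, and it has the same eigenvalue set as $\tau$.

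By the uniqueness part of Theorem~\ref{thm:sc4sp} (or the uniqueness for Inverse Problem~\ref{ip:sp} in $\mathbf{W}_{simp}$, obtained via reduction to Inverse Problem~\ref{ip:main}), we get $\hat\tau = \tau$. Hence $-\tau_1 = \tau_1$, so $\tau_1 \equiv 0$. Uniqueness of $\tau$ with $\tau_1 \equiv 0$ is inherited from the same uniqueness statement.

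The main point requiring care is the second step. One must verify that conjugation maps the eigenvalue sets correctly for all $k$ simultaneously, including the multiplicities of the $\mu_{l,k}$, which are not assumed simple. One must also ensure the reordering/indexing is irrelevant, since the eigenvalue set is understood as multisets for each $k$. The conjugation identity for the characteristic determinants $\Delta_{k,k}$ and $\Delta_{k+1,k}$ handles both issues directly.
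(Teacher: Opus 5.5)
Your proposal is correct and takes essentially the paper's route. The paper proves this theorem by analogy with Theorem~\ref{thm:30}: existence comes from Theorem~\ref{thm:sc4sp}, the conjugate vector $\{\tau_0,-\tau_1,\tau_2\}$ has the same eigenvalue set (by the symmetry hypothesis for $k=1$ and by \eqref{sasp} for $k=2,3$), and uniqueness then forces $\tau_1\equiv 0$. Your careful handling of multiplicities via $\hat\Delta(\la)=\overline{\Delta(\overline{\la})}$ fills in details the paper leaves implicit.
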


The conditions of Theorem~\ref{thm:40sp}, in particular, imply that $\{ \la_{l,1}, \mu_{l,1}\}_{l \ge 1} = \{ \la_{l,3}, \mu_{l,3}\}_{l \ge 1}$ and these values are real for sufficiently large $l$.

Inverse Problems~\ref{ip:main} and~\ref{ip:sp} in the case $n = 4$, $\tau_1 \equiv 0$ are uniformly stable according to Theorems~\ref{thm:uni4} and~\ref{thm:uni4sp}.

\begin{remark}
The examples of $n = 3, 4$ show that, for the eigenvalue set $\{ \la_{l,k}, \mu_{l,k} \}_J$, there is a gap ``$\ln(l + 1)$'' in the asymptotics between the necessary conditions and our sufficient conditions (Theorems~\ref{thm:char3sp} and~\ref{thm:sc4sp}). This logarithm is essential in the proof of the estimate \eqref{aD2} (see Remark~\ref{rem:ln}). In the case $n = 2$, Freiling and Yurko \cite[Section~1.5.3]{FY01} have obtained the analog of \eqref{aD2} without requiring the additional $\ln(l+1)$, by relying on the assertion that $\Delta_{k+1,k}(\la)$ is the characteristic function of a suitable Sturm-Liouville operator. For higher orders $n$, this approach requires much technical work: one has to repeat all the proofs from \cite{Bond24} under different boundary conditions. Therefore, in this paper, we confine ourselves by sufficient conditions containing the additional logarithm. Anyway, this paper does not aim to give a complete characterization of the eigenvalue set, since we impose the other restrictive conditions (A-1) and (A-2).
\end{remark}

\renewcommand{\thesection}{\Alph{section}}
\setcounter{section}{0}

\section{Estimates for Weyl solutions} \label{app:Weyl}

In this appendix, we recall asymptotic properties of the Weyl solutions $\Phi_k(x,\la)$ from \cite{Yur02, Bond21}. Moreover, we present the estimates for $\vv_{l,k,\eps}(x)$ and other functions of Sections~\ref{sec:maineq} and~\ref{sec:bound} from previous studies \cite{Bond22, Bond23-loc} and discuss the conditions under which these estimates are uniform. 

Put $\la = \rho^n$ and divide the $\rho$-plane into the sectors
\begin{equation} \label{defGa}
\Gamma_s := \left\{ \rho \in \mathbb C \colon \frac{\pi(s-1)}{n} < \arg \rho < \frac{\pi s}{n}\right\}, \quad s = \overline{1,2n}.
\end{equation}

In each fixed sector $\Gamma_s$, denote by $\{ \om_k \}_{k = 1}^n$ the roots of the equation $\om^n = 1$ numbered so that
\begin{equation} \label{order}
\mbox{Re} \, (\rho \om_1) < \mbox{Re} \, (\rho \om_2) < \dots < \mbox{Re} \, (\rho \om_n), \quad \rho \in \Gamma_s.
\end{equation}

For $s \in \{ 1, 2, \dots, 2n \}$, $h > 0$, and $\rho_* > 0$, define the region
$$
\Gamma_{s,h,\rho_*} :=  \left\{ \rho \in \mathbb C \colon \rho + h \exp\bigl( \tfrac{i \pi (s - 1/2)}{n}\bigr)  \in \Gamma_s, \, |\rho| > \rho_*\right\}. 
$$

In order to estimate the Weyl solutions $\Phi_k(x,\rho)$, we use the Birkhoff solutions described by the following proposition.

\begin{prop}[\hspace*{-5pt}\cite{Nai68}] \label{prop:Birk}
For every $s \in \{1,2, \dots, 2n\}$, $h > 0$, and some $\rho_* > 0$, there exists a fundamental system of solutions $\{ y_k(x,\rho) \}_{k = 1}^n$ of equation \eqref{eqp} with the following properties:
\begin{enumerate}
\item For each $k \in \{ 1, 2, \dots, n \}$, the function $y_k(x,\rho)$ is continuous for $x \in [0,1]$, $\rho \in \overline{\Gamma}_{s,h,\rho_*}$ and satisfy the equation
\begin{multline} \label{Volt}
y_k(x,\rho) = \exp(\rho \om_k x) - \int_0^x \sum_{j = 1}^k \om_j \exp(\rho \om_j (x-t)) \mathcal M_t(y_k)\,dt \\ + 
\int_x^1 \sum_{j = k+1}^n \om_j \exp(\rho\om_j(x-t)) \mathcal M_t(y_k)\, dt, \quad \mathcal M_t(y_k) := \frac{1}{n} \rho^{1-n} \sum_{\mu = 0}^{n-2} p_{\mu}(t) y_k^{(\mu)}(t,\rho).
\end{multline}
\item For each fixed $x \in [0,1]$, the functions $y_k(x,\rho)$ are analytic in $\Gamma_{s,h,\rho_*}$.
\item The estimate
\begin{equation} \label{estBirk}
\bigl| y_k^{(\nu)}(x,\rho) (\rho \om_k)^{-\nu} \exp(-\rho \om_k x) - 1 \bigr| \le C |\rho|^{-1},
\end{equation}
hold uniformly with respect to $x \in [0,1]$ and $\rho \in \overline{\Gamma}_{s,h,\rho_*}$.
\end{enumerate}
\end{prop}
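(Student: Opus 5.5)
The statement is the Birkhoff-type proposition attributed to Naimark: in each sector $\Gamma_s$ (shifted by $h$ and truncated at $|\rho|>\rho_*$) there is a fundamental system $\{y_k\}$ solving the integral equation \eqref{Volt}, analytic in $\rho$, with the asymptotics \eqref{estBirk}. We construct $y_k$ directly as the solution of \eqref{Volt}, viewed as a fixed-point equation. To do this we first rewrite equation \eqref{eqp} as $y^{(n)}-\rho^n y = -\sum_{\mu=0}^{n-2}p_\mu y^{(\mu)}$ and use the variation-of-constants kernel for $y^{(n)}=\rho^n y$. The Green-type kernel built from $\frac{1}{n}\rho^{1-n}\om_j e^{\rho\om_j(x-t)}$ reproduces the right-hand side of \eqref{Volt}. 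We then check formally that any $C^{n-1}$ solution of \eqref{Volt} solves \eqref{eqp}. Splitting the integration $\int_0^x$ for $j\le k$ and $\int_x^1$ for $j>k$ does not change this, since the difference is a combination of exponentials $e^{\rho\om_j x}$, which lie in the kernel of $y^{(n)}-\rho^n y$.

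Next we normalize by setting $z_k^{(\nu)}(x,\rho):=y_k^{(\nu)}(x,\rho)(\rho\om_k)^{-\nu}e^{-\rho\om_k x}$ for $\nu=\overline{0,n-1}$, and write the system for the vector $(z_k,\dots)$ obtained by differentiating \eqref{Volt} up to order $n-1$. The kernel entries then become $\rho^{1-n}\rho^{\nu}\rho^{\mu}\,e^{\rho(\om_j-\om_k)(x-t)}p_\mu(t)$, up to bounded factors. The ordering \eqref{order} gives $\mbox{Re}\,\rho(\om_j-\om_k)\le 0$ for $j\le k$ with $t<x$, and $\ge 0$ for $j>k$ with $t>x$. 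Hence every exponential factor is bounded by $1$. This is exactly why the region is shifted by $h$: it keeps the ordering valid, with a uniform margin, up to the boundary of $\overline\Gamma_{s,h,\rho_*}$.

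The main obstacle is that $p_\mu$ carries up to $\mu=n-2$ derivatives of $y$, so the naive power count is $\rho^{1-n+\nu+\mu}$. For $\nu=n-1$ and $\mu=n-2$ this is $\rho^{n-2}$, which is not small. We handle this in two steps. For the function itself and low derivatives we work in the weighted norm $\max_\nu\sup_x|z^{(\nu)}|$ and note that, when $\nu=0$, the factor $\rho^{1-n}\rho^{\mu}\le|\rho|^{-1}$ because $\mu\le n-2$. The higher derivatives of $y$ are then obtained from the derivatives of the kernel, each derivative producing exactly a factor $\rho\om_j$, and this matches the normalization $(\rho\om_k)^{-\nu}$. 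As a result, the operator mapping $z$ to the integral terms has norm at most $C|\rho|^{-1}\sum_\mu\|p_\mu\|_{L_1}$. For the smoothness $p_\mu\in W_2^\mu$ with $\mu\ge1$ one may first integrate by parts to move derivatives onto $p_\mu$. This yields the same $O(\rho^{-1})$ bound and is not delicate. Choosing $\rho_*$ so large that this norm is less than $1/2$, the Neumann series converges. We obtain a unique continuous solution with $|z_k^{(\nu)}-1|\le C|\rho|^{-1}$, which is \eqref{estBirk}.

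The remaining properties are routine. Analyticity in $\rho$ for fixed $x$ holds because each Neumann iterate is analytic and the convergence is uniform on compact subsets. Continuity in $(x,\rho)$ on the closure follows in the same way. Finally, the Wronskian of $\{y_k\}$ at $x=0$ is, by \eqref{estBirk}, $\rho^{n(n-1)/2}\bigl(\det[\om_k^{\nu}]+O(\rho^{-1})\bigr)$. The Vandermonde determinant $\det[\om_k^{\nu}]$ is nonzero, so enlarging $\rho_*$ if necessary makes the Wronskian nonzero, and $\{y_k\}$ is a fundamental system.
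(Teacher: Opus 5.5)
Your route (reading \eqref{Volt} as a variation-of-constants formula, normalizing $z^{(\nu)}=y^{(\nu)}(\rho\om_k)^{-\nu}e^{-\rho\om_k x}$, using $\mu\le n-2$ to get an $O(|\rho|^{-1})$ kernel, solving by a Neumann series, and using the Vandermonde determinant for independence) is Naimark's standard argument. The paper only cites it. However, one step is wrong as written: your treatment of the region.

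$\Gamma_{s,h,\rho_*}$ is not a shrunken sector on which \eqref{order} holds with a margin. It is $\Gamma_s$ translated by $-h\exp(i\pi(s-1/2)/n)$, i.e. pushed \emph{away} from its bisector. So it contains $\Gamma_s\cap\{|\rho|>\rho_*\}$ plus strips of width $h\sin\frac{\pi}{2n}$ beyond both boundary rays. For instance, for $s=1$ it contains points just below the positive real axis.

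In those strips the ordering \eqref{order}, with the numbering fixed in $\Gamma_s$, fails for the pairs $\om_j,\om_k$ whose real parts $\mbox{Re}\,\rho\om_j$ coincide on the ray. Hence your claim that every factor $e^{\rho(\om_j-\om_k)(x-t)}$ is bounded by $1$ is false there.

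The correct bound is as follows. Write $\rho=\rho'-h e^{i\pi(s-1/2)/n}$ with $\rho'\in\overline{\Gamma}_s$. For $j\le k$ this gives $\mbox{Re}\,\rho(\om_j-\om_k)\le \mbox{Re}\,\rho'(\om_j-\om_k)+h|\om_j-\om_k|\le 2h$, and symmetrically for $j>k$. So the exponentials in the kernel (with $t\le x$, resp. $t\ge x$) are bounded by $e^{2h}$. The kernel norm becomes $e^{2h}|\rho|^{-1}\sum_\mu\|p_\mu\|_{L_1}$, and both $\rho_*$ and the constant in \eqref{estBirk} must depend on $h$. This is exactly the dependence on $h$ and $Q$ stated right after the proposition.

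The purpose of the shift is the opposite of what you say. It gives the Birkhoff estimates in a full neighborhood of the boundary rays, where for example the points $\rho^0_{l,k}$ with real $\la^0_{l,k}$ lie. The price is an $h$-dependent constant. With this correction your Neumann-series argument goes through unchanged.

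Two smaller points. First, the "obstacle" $\rho^{n-2}$ never arises. The factor $\rho^\nu$ produced by differentiating the kernel cancels exactly against $(\rho\om_k)^{-\nu}$, so the normalized kernel entries are $\frac{1}{n}\rho^{1-n}(\rho\om_k)^{\mu}\om_j^{\nu+1}\om_k^{-\nu}e^{\rho(\om_j-\om_k)(x-t)}p_\mu(t)$, not $\rho^{1-n}\rho^{\nu}\rho^{\mu}\cdots$.

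Second, no integration by parts is needed. The argument only uses $p_\mu\in L_1$, and $W_2^\mu[0,1]\subset L_1[0,1]$; integration by parts would matter only for distributional coefficients. What you should state instead is why the solved components are genuine derivatives. When \eqref{Volt} is differentiated $m$ times, the boundary terms are multiples of $\sum_{j=1}^n\om_j^{m}$, which vanish for $1\le m\le n-1$. At order $n$ the boundary term is $-n\rho^{n-1}\mathcal M_x(y_k)$, which is what makes $y_k$ solve \eqref{eqp}.
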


Assume that $\mathbf{p} = \{ p_s \}_{s = 0}^{n-2}$ in $B_Q$ for some $Q > 0$, that is, $\| p_s \|_{W_2^s[0,1]} \le Q$ for $s = \overline{0,n-2}$. Then the value $\rho_*$ in Proposition~\ref{prop:Birk} and the constant $C$ in \eqref{estBirk} depend only on $h$ and $Q$.

For $\rho \in \overline{\Gamma}_{s,h,\rho_*}$, the Weyl solutions can be expanded as
\begin{equation} \label{expPhik}
\Phi_k(x,\la) = \sum_{j = 1}^n b_{j,k}(\rho) y_j(x,\rho),
\end{equation}
where the coefficients $b_{j,k}(\rho)$ are analytic for $\rho^n \ne \la_{l,k}$.

Denote by $\{ \la_{l,k}^0 \}_{l \ge 1}$ the eigenvalues of the boundary value problems $\mathcal L_k^0$ for the equation $y^{(n)} = \lambda y$ with the boundary conditions \eqref{bc}. For sufficiently large $l$, the eigenvalues $\{ \la_{l,k}^0 \}$ are real (see \cite[Lemma~3]{Bond24}). 
Let $\rho_*$ be sufficiently large and $l_*$ be such that $|\la_{l,k}^0| \ge \rho_*^n$ for all $l \ge l_*$.
Then, for $l \ge l_*$,
one can choose the roots $\rho_{l,k}^0 := \sqrt[n]{\la_{l,k}^0}$ lying in $\Gamma_{s,h,\rho_*}$. So, define the regions
$$
\Gamma_{s,h,\rho_*}^{k,\de} := \bigl\{ \rho \in \Gamma_{s,h,\rho_*} \colon \forall l \ge l_* \: |\rho - \rho_{l,k}^0| \ge \de \}.
$$

In view of the asymptotics \eqref{asymptla}, there holds $\rho_{l,k} - \rho_{l,k}^0 = o(1)$ as $l \to \infty$, where $\rho_{l,k} = \sqrt[n]{\la_{l,k}} \in \Gamma_{s,h,\rho_*}$. Hence, for sufficiently large $\rho_*$ and sufficiently small $h$ and $\de$, the function $\Phi_k(x, \rho^n)$ is analytic in $\Gamma_{s,h,\rho_*}^{k,\de}$.

By using Proposition~\ref{prop:Birk} and \eqref{expPhik}, the following lemma is deduced.

\begin{lem} \label{lem:estPhik}
There hold
\begin{gather} \label{estPhik}
|\Phi_k^{(j-1)}(x,\rho^n)| \le C |\rho|^{j-k} |\exp(\rho \om_k x)|, \\
\label{estdifPhik} |\Phi_k^{(j-1)}(x,\rho^n) - \tilde \Phi_k^{(j-1)}(x,\rho^n)| \le C |\rho|^{j-k-1} |\exp(\rho \om_k x)|, 
\end{gather}
for $k,j = \overline{1,n}$, $x \in [0,1]$, $\rho \in \overline{\Gamma}_{s,h,\rho_*}^{k,\de}$, $s = \overline{1,2n}$, any $h > 0$ and a sufficiently small $\de > 0$, and some $\rho^* > 0$. 
For $\mathbf{p}$ and $\tilde{\mathbf{p}}$ in $B_Q$, any $h > 0$, and any sufficiently small $\de > 0$, the constants $\rho_*$ and $C$ can be chosen depending only on $Q$, $h$, and $\de$.
\end{lem}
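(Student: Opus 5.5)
The plan is the standard one: expand $\Phi_k$ in the Birkhoff system and bound the coefficients $b_{j,k}(\rho)$ in \eqref{expPhik} by Cramer's rule.

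\textbf{Step 1: a linear system for the coefficients.} Substituting \eqref{expPhik} into the boundary conditions \eqref{bcPhi} gives $n$ linear equations for $b_{1,k}(\rho),\dots,b_{n,k}(\rho)$:
\begin{gather*}
\sum_j b_{j,k}\, y_j^{(i-1)}(0,\rho) = \de_{k,i}, \quad i = \overline{1,k}, \\
\sum_j b_{j,k}\, y_j^{(i-1)}(1,\rho) = 0, \quad i = \overline{1,n-k}.
\end{gather*}

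\textbf{Step 2: leading-order form of the matrix.} By \eqref{estBirk}, $y_j^{(i-1)}(0,\rho) = (\rho\om_j)^{i-1}(1+O(\rho^{-1}))$ and $y_j^{(i-1)}(1,\rho) = (\rho\om_j)^{i-1} e^{\rho\om_j}(1+O(\rho^{-1}))$. Then:
\begin{itemize}
\item Factor $\rho^{i-1}$ out of each row.
\item Factor $e^{\rho\om_j}$ out of the columns $j=k+1,\dots,n$.
\item Relative to the factored quantities, $e^{\rho\om_j}$ for $j \le k$ is exponentially small in the rows at $x=1$, by the ordering \eqref{order}.
\end{itemize}
The principal part of the determinant is then block triangular. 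Its diagonal blocks are Vandermonde determinants in $\{\om_1,\dots,\om_k\}$ and $\{\om_{k+1},\dots,\om_n\}$, which are nonzero. Hence the determinant equals
\[
\rho^{n(n-1)/2}\,\prod_{j>k} e^{\rho\om_j}\,\bigl([\Omega_0] + O(\rho^{-1})\bigr),
\]
up to the small correction terms. Some care is needed near the rays where two $\om$'s have equal real part.

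\textbf{Step 3: staying away from eigenvalues.} The determinant vanishes only at the eigenvalues $\rho_{l,k}$, which lie within $o(1)$ of $\rho^0_{l,k}$. The restriction $\rho\in\Gamma^{k,\de}_{s,h,\rho_*}$ is therefore used as follows: on the shifted sector, write the determinant as a trigonometric-type sum whose zeros are the $\rho^0_{l,k}$. A standard lower bound,
\[
|\text{principal sum}| \ge C(\de)\,\text{(its dominant exponential)},
\]
holds off $\de$-discs around the zeros, and it survives the $O(\rho^{-1})$ perturbation for $|\rho| > \rho_*(Q,h,\de)$. Cramer's rule then gives
\[
|b_{j,k}(\rho)| \le C|\rho|^{1-k}|e^{\rho(\om_k-\om_j)}| \quad \text{for } j\le k,
\]
with the column factors absorbed for $j>k$ so that each product $b_{j,k}y_j$ is controlled. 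Multiplying by $|y_j^{(i-1)}| \le C|\rho|^{i-1}|e^{\rho\om_j x}|$ and using \eqref{order} to dominate $e^{\rho\om_j x}$ together with the coefficient exponentials by $e^{\rho\om_k x}$ gives \eqref{estPhik}.

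\textbf{Step 4: the difference estimate.} For \eqref{estdifPhik}, the Birkhoff solutions of $\mathbf p$ and $\tilde{\mathbf p}$ share the same principal term $e^{\rho\om_j x}$, so every matrix entry differs by a relative $O(\rho^{-1})$. The same Cramer computation, applied to the difference of the two solutions, gains one power of $|\rho|$.

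\textbf{Uniformity.} The constants in \eqref{Volt} and \eqref{estBirk} depend only on $Q$ and $h$, through Gronwall-type bounds on the Volterra integral equation. The determinant lower bound depends only on $\de$ and on the unperturbed exponents. Hence $\rho_*$ and $C$ depend only on $Q,h,\de$.

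The main obstacle is Step 3: obtaining the uniform lower bound for the characteristic determinant on $\Gamma^{k,\de}_{s,h,\rho_*}$ in all sectors, including the boundary rays where two exponentials have comparable size. I would handle it by following \cite{Yur02}, as is done in \cite{Bond21}.
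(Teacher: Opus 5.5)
Your route is the paper's. You expand $\Phi_k$ in the Birkhoff system as in \eqref{expPhik}, obtain $b_{j,k}$ as determinant ratios by Cramer's rule, and bound the characteristic determinant from below off the $\delta$-discs around $\rho^0_{l,k}$. Like the paper's sketch, you defer that lower bound to Yurko and the author's earlier work.

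However, Step 3 as written puts the exponential factor on the wrong index range, and with it the final domination step fails. For $j<k$ you have $\mbox{Re}\,\rho\omega_j\le\mbox{Re}\,\rho\omega_k$ (up to $O(h)$), so $|e^{\rho(\omega_k-\omega_j)}|$ is exponentially large. Your bound then only gives $|b_{j,k}y_j^{(i-1)}(x)|\le C|\rho|^{i-k}|e^{\rho\omega_k x}|\,|e^{\rho(\omega_k-\omega_j)(1-x)}|$. At $x=0$ this overshoots \eqref{estPhik} by an exponentially large factor, while $\Phi_k^{(i-1)}(0)=\delta_{k,i}$ is bounded.

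Now expand the minor (row $y^{(k-1)}(0)$ and column $j$ deleted) by Laplace over the $x=0$ rows. The dominant column set for the $x=1$ rows is $\{k+1,\dots,n\}$ if $j\le k$, and $\{k,\dots,n\}\setminus\{j\}$ if $j>k$. This gives
\[
|b_{j,k}(\rho)| \le C|\rho|^{1-k} \quad (j \le k), \qquad |b_{j,k}(\rho)| \le C|\rho|^{1-k}\,|e^{\rho(\omega_k-\omega_j)}| \quad (j > k).
\]
Then $|b_{j,k}y_j^{(i-1)}(x)|$ is bounded by $C|\rho|^{i-k}|e^{\rho\omega_k x}|\cdot|e^{\rho(\omega_j-\omega_k)x}|$ for $j\le k$. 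For $j>k$ it is bounded by $C|\rho|^{i-k}|e^{\rho\omega_k x}|\cdot|e^{\rho(\omega_j-\omega_k)(x-1)}|$. In both cases the last factor is bounded, which yields \eqref{estPhik}.

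You also use \eqref{order} as a strict ordering, but on $\overline{\Gamma}_{s,h,\rho_*}$ it holds only up to $O(h)$. Writing $\rho=\rho'-he^{i\pi(s-1/2)/n}$ with $\rho'\in\Gamma_s$ gives $\mbox{Re}\,\rho(\omega_j-\omega_k)\le 2h$ for $j<k$. So the last factors above are bounded by $e^{2h}$, not by $1$. Similarly, $|e^{\rho(\omega_k-\omega_{k+1})}|\le e^{2h}$ keeps the two-term principal part of the determinant under control near the relevant boundary ray. This is where the $h$-dependence of $C$ enters, and it is exactly the ``transfer to extended sectors'' in the paper's sketch.

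Two smaller slips:
\begin{itemize}
\item The $\rho$-power of the characteristic determinant is $\rho^{k(k-1)/2+(n-k)(n-k-1)/2}$, not $\rho^{n(n-1)/2}$. For $n=3$, $k=1$ it is $\rho^1$, as in the appendix computation of $D(\rho)$.
\item \eqref{Volt} is not a Volterra equation, since it contains an $\int_x^1$ term. The uniformity in $Q$ comes from successive approximations converging for $|\rho|\ge\rho_*(Q,h)$, not from a Gronwall bound.
\end{itemize}
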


\begin{proof}[Sketch of the proof]
The estimates \eqref{estPhik} and \eqref{estdifPhik} have been obtained in the previous studies \cite{Yur02, Bond21, Bond24}
for the non-extended closed sectors $\overline{\Gamma}_s$ with appropriate ``holes'', see the asymptotics (2.1.19) in \cite{Yur02}, and a more detailed derivation in \cite[Section~5]{Bond21}, and \cite[Proposition~1]{Bond24} for the derivatives of $\Phi_k$. The asymptotics of the solution $\Phi_k(x,\la)$ and its derivatives are deduced by using the representation \eqref{expPhik} and the asymptotics of the Birkhoff solutions from Proposition~\ref{prop:Birk}. In particular, the coefficients $b_{j,k}(\rho)$ have the form $\dfrac{d_{j,k}(\rho)}{d_k(\rho)}$, where $d_{j,k}(\rho)$ and $d_k(\rho)$ are some determinants composed of the Birkhoff solutions $\{ y_r(x,\rho) \}_{r=1}^n$ and their derivatives at the points $x = 0$ and $x = 1$.
Since the estimates \eqref{estBirk} are valid in the extended sectors $\overline{\Gamma}_{s,h,\rho_*}$, then the estimates \eqref{estPhik} and \eqref{estdifPhik} are easily transferred to the desired regions $\overline{\Gamma}_{s,h,\rho_*}^{k,\de}$.
\end{proof}

Using Lemma~\ref{lem:estPhik}, we obtain the estimates for the functions $\vv_{l,k,\eps}(x)$ and $\dot \vv_{l,k,\eps}(x)$ defined by \eqref{defvv} and \eqref{defdvv}, respectively, summarized in the following lemma.

\begin{lem} \label{lem:estvv}
Suppose that $\Omega, \de > 0$, $\mathbf{p}$ and $\tilde{\mathbf{p}}$ belong to $B_Q$ and the corresponding eigenvalues satisfy \eqref{bbound} and $|\la_{l,k} - \la_{s,k+1}| \ge \de$ for all $l,s \ge 1$, $k = \overline{1,n-2}$. Then
\begin{gather} \label{estvv1}
|\vv_{l,k,\eps}^{(j)}(x)| \le C l^j w_{l,k}(x), \\ \label{estvv2}
|\dot \vv_{l,k,\eps}^{(j)}(x)| \le C l^j w_{l,k}(x), \\ \label{estvv3}
|\vv_{l,k,0}^{(j)}(x) - \vv_{l,k,1}^{(j)}(x)| \le C l^j \xi_l w_{l,k}(x).
\end{gather}
If additionally $|\tilde \la_{l,k} - \tilde \la_{s,k+1}| \ge \de$ for all $l,s \ge 1$, $k = \overline{1,n-2}$, then
\begin{gather} \label{estvv4}
|\vv_{l,k,\eps}^{(j)}(x) - \tilde \vv_{l,k,\eps}^{(j)}(x)| \le C l^{j-1} w_{l,k}(x), \\ \label{estvv5}
|\dot \vv_{l,k,\eps}^{(j)}(x) - \dot{\tilde \vv}_{l,k,\eps}^{(j)}(x)| \le C l^j w_{l,k}(x), \\ \label{estvv6}
|\vv_{l,k,0}^{(j)}(x) - \vv_{l,k,1}^{(j)}(x) - \tilde \vv_{l,k,0}^{(j)}(x) + \tilde \vv_{l,k,1}^{(j)}(x)| \le C l^{j-1} \xi_l w_{l,k}(x).
\end{gather}
In the above estimates $(l,k,\eps) \in V$, $x \in [0,1]$, $j = \overline{0,n-1}$, $w_{l,k}(x)$ and $\xi_l$ are defined by \eqref{defw} and \eqref{defxi}, respectively, and the constant $C$ depends only on $Q$ and $\de$.
\end{lem}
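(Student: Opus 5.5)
The plan is to evaluate Lemma~\ref{lem:estPhik} at the points $\rho = \rho_{l,k,\eps} := \sqrt[n]{\la_{l,k,\eps}}$. Here $\vv_{l,k,\eps} = \Phi_{k+1}(x,\la_{l,k,\eps})$, so the estimate \eqref{estPhik} with index $k+1$ and $j-1$ replaced by $j$ gives
$$
|\vv_{l,k,\eps}^{(j)}(x)| \le C |\rho_{l,k,\eps}|^{j-k}\, |\exp(\rho_{l,k,\eps}\om_{k+1} x)|.
$$
Two inputs are needed. First, the asymptotics \eqref{asymptla} together with $\Xi \le \Omega$ give $|\rho_{l,k,\eps}| \asymp l$ uniformly. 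Second, I will check that $|\exp(\rho_{l,k,\eps}\om_{k+1}x)| \le C\exp(-xl\cot(k\pi/n))$. This comes from choosing the sector $\Gamma_s$ that contains $\rho_{l,k}^0$; by \cite[Lemma~3]{Bond24}, $\la^0_{l,k}$ is real with sign $(-1)^{n-k}$. The real part of $\rho\om_{k+1}$ is $-l\pi\cot(k\pi/n)\cdot(\text{const})$ up to $O(1)$. The constant is absorbed because $c_{0,k} = (\pi/\sin(\pi k/n))^n$, which makes $\mathrm{Re}(\rho\om_{k+1}) = -l\cot(k\pi/n)\cdot\pi/\pi\cdot\ldots$; I expect the normalisation to match \eqref{defw} exactly, and the $O(1)$ corrections to give a bounded factor. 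Together these yield \eqref{estvv1}.

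The hypotheses are needed to place $\rho_{l,k,\eps}$ in the region where Lemma~\ref{lem:estPhik} applies. The separation conditions $|\la_{l,k}-\la_{s,k+1}| \ge \de$ and \eqref{bbound} keep $\la_{l,k,\eps}$ at distance $\ge \de$ from the poles of $\Phi_{k+1}$ and $\tilde\Phi_{k+1}$. For large $l$, this translates into $|\rho-\rho^0_{s,k+1}| \ge \de'$ in the $\rho$-plane. I expect the main obstacle here: the points $\rho_{l,k,\eps}$ lie on the boundary rays between sectors, which is exactly why the extended regions $\Gamma_{s,h,\rho_*}$ are used. For the finitely many small $l$, with $|\la| \le r(Q,\Omega)$, I will use a compactness and maximum-principle argument instead. $\Phi_{k+1}(x,\la)\Delta_{k+1,k+1}(\la)$ is entire and bounded on compacts uniformly for $\mathbf p\in B_Q$. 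Dividing by $\Delta_{k+1,k+1}$, which is bounded below on the $\de$-separated set by a uniform Hadamard-product argument, gives a bound $C(Q,\de)$.

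For \eqref{estvv2}, \eqref{estvv3}, \eqref{estvv5}, I will apply the Cauchy integral formula in $\la$. On a disc of radius $\asymp l^{n-1}$ around $\la_{l,k,\eps}$, that is, radius $\asymp 1$ in $\rho$, which avoids the poles by separation, the bound \eqref{estvv1} holds with the same $w_{l,k}$ up to a constant. The derivative then gains a factor $l^{1-n}$, which the prefactor $l^{n-1}$ in \eqref{defdvv} cancels; this gives \eqref{estvv2}. For \eqref{estvv3}, I will write the difference as an integral of $\frac{d}{d\la}\Phi_{k+1}$ along the segment, which gives $C l^j w_{l,k}\, l^{1-n}|\la_{l,k}-\tilde\la_{l,k}| \le C l^j w_{l,k}\xi_l$. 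If the segment passes near poles, I will deform it within the separated region. Finally, \eqref{estvv4} follows from \eqref{estdifPhik} in the same way as above. Then \eqref{estvv5} and \eqref{estvv6} follow by applying the Cauchy and segment arguments to the difference $\Phi_{k+1}-\tilde\Phi_{k+1}$, using the bound \eqref{estvv4} on that difference.
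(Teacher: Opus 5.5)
Your overall route is the paper's. It uses Lemma~\ref{lem:estPhik} in the extended sectors for $l\ge l_*$, and a uniform bound on $\{|\la|\le r,\ |\la-\la_{s,k+1}|\ge\de\}$ for $l<l_*$. Your argument there, via the entire function $\Phi_{k+1}\Delta_{k+1,k+1}$ and a Hadamard-product lower bound, is a more explicit version of the paper's continuity claim and makes the uniformity over $B_Q$ visible. For \eqref{estvv2}, \eqref{estvv3}, \eqref{estvv5}, \eqref{estvv6} you use Cauchy estimates where the paper uses Schwarz's lemma.

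\textbf{Gap 1: the large-$l$ separation.} The $\de$-hypotheses do not ``translate'' into $|\rho_{l,k,\eps}-\rho^0_{s,k+1}|\ge\de'$, for two reasons. At $|\la|\sim l^n$, a gap $\de$ in $\la$ is only a gap of order $\de l^{1-n}$ in $\rho$. Also, the hypotheses concern the actual poles, whereas $\Gamma^{k+1,\de}_{s,h,\rho_*}$ removes discs around the unperturbed $\rho^0_{s,k+1}$. For the same reason they cannot keep your $\la$-disc of radius $\asymp l^{n-1}$ free of poles.

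The missing observation, which is exactly what the paper uses, is a sign argument. One has $\mathrm{Re}\,\la_{l,k,\eps}\to(-1)^{n-k}\infty$, while the $\la^0_{s,k+1}$ are real of the opposite sign $(-1)^{n-k-1}$. Hence $\rho_{l,k,\eps}$ stays within bounded distance of one boundary ray of $\Gamma_s$, and all excluded discs lie on the other ray, at distance $\ge cl$. So for $l\ge l_*(Q)$ and suitable $h$, a disc of fixed radius about $\rho_{l,k,\eps}$ lies in $\overline{\Gamma}^{k+1,\de}_{s,h,\rho_*}$ without using the $\de$-hypotheses at all; those are needed only for $l<l_*$.

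With this, your Cauchy step in the $\rho$-variable (using $d/d\la=(n\rho^{n-1})^{-1}d/d\rho$) works. The segment in \eqref{estvv3}, \eqref{estvv6} is short on the $\rho$-scale because $\la_{l,k}-\tilde\la_{l,k}=O(l^{n-2})$. For $l<l_*$, do not deform paths, since the pole-free set need not be connected. Split instead: if $|\la_{l,k}-\tilde\la_{l,k}|\le\de/2$ the segment stays in a pole-free disc; otherwise $\xi_l\ge c(Q,\de)$, and the triangle inequality with \eqref{estvv1} suffices.

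\textbf{Gap 2: the exponent.} It cannot be left to ``normalisation'': a multiplicative constant in the exponent is not absorbable into $C$ (only an additive $O(1)$ is), so it has to be computed. On the ray $\arg\rho=0$ ($n-k$ even) or $\arg\rho=\pi/n$ ($n-k$ odd), exactly $k-1$ of the numbers $\mathrm{Re}(\rho\om_j)$ lie below $-|\rho|\cos(k\pi/n)$, and this value is attained twice. Hence $\mathrm{Re}(\rho\om_{k+1})=-|\rho|\cos(k\pi/n)$. Together with $|\rho_{l,k,\eps}|=\pi l/\sin(k\pi/n)+O(1)$, this gives $|\exp(\rho_{l,k,\eps}\om_{k+1}x)|\le C\exp(-\pi xl\cot(k\pi/n))$.

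So your argument yields \eqref{estvv1} with $w_{l,k}(x)=l^{-k}\exp(-\pi xl\cot(k\pi/n))$, i.e.\ with a factor $\pi$ that the printed \eqref{defw} lacks. For example, with $n=3$, $k=1$ one gets $\mathrm{Re}(\rho_{l,1}\om_2)=-\pi l/\sqrt3+O(1)$, not $-l/\sqrt3$. With \eqref{defw} read literally, the bound fails for $k>n/2$, where $\cot(k\pi/n)<0$. The paper's proof passes over this point too; \eqref{defw} has to be read with $\pi l$ in the exponent.
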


\begin{proof}
Fix any $k \in \{ 1, 2, \dots, n-2\}$, $s \in \{1, 2, \dots, 2n\}$, and $h > 0$. Then, under the assumptions of this lemma, the values $\sqrt[n]{\la_{l,k,\eps}}$ for sufficiently large $l$ lie in $\overline{\Gamma}_{s,h,\rho_*}^{k+1,\de}$, where the root branch is chosen so that $\sqrt[n]{\la_{l,k,\eps}} \in \Gamma_{s,h,\rho_*}$. Indeed, according to the asymptotics \eqref{asymptla}, we have $\mbox{Re} \, \la_{l,k} \to +\infty$ if $(n-k)$ is even and $\mbox{Re} \, \la_{l,k} \to -\infty$ if $(n-k)$ is odd. Therefore, the eigenvalues $\la_{l,k}$ are asymptotically separated from $\la_{s,k+1}^0$. Hence, one immediately obtains the estimates \eqref{estvv1} and \eqref{estvv4} for $l$ not less than some $l_*$ from \eqref{estPhik} and \eqref{estdifPhik}, respectively.

For $l < l_*$, the eigenvalues $\la_{l,k,\eps}$ lie in a circle $|\la| \le r$, whose radius depends only on $Q$. The functions $\Phi_k^{(j-1)}(x,\la)$ are continuous by $x \in [0,1]$ and $\la \in \mathbb C \setminus \{ \la_{l,k} \}$, so
\begin{equation} \label{estsmall}
|\Phi_k^{(j)}(x,\la)| \le C, \quad |\la| \le r, \quad |\la - \la_{l,k}| \ge \de,
\end{equation}
for $j = \overline{0,n-1}$ and $x \in [0,1]$, where the constant $C$ depends only on $Q$ and $\de$. The estimate \eqref{estsmall} imply \eqref{estvv1} and \eqref{estvv4} for $l < l_*$.

The estimates \eqref{estvv2}, \eqref{estvv3}, \eqref{estvv5}, and \eqref{estvv6} are obtained by applying Schwarz's Lemma (see Lemma 1.3.1 and its application in the proof of Lemma 1.3.2 in \cite{Yur02}) to \eqref{estPhik}, \eqref{estdifPhik}, and \eqref{estsmall}.
\end{proof}

Clearly, Lemma~\ref{lem:estPhik} is applicable to the Weyl solutions $\Phi_k^{\star}(x,\la)$ of equation \eqref{eqstar}. Under the assumptions of Lemma~\ref{lem:estvv}, we get the following estimates (see \cite[Corollary 2]{Bond23-loc}):
\begin{equation} \label{estPhi*}
|\Phi_{n-k+1}^{\star(j)}(x, \la_{l,k,\eps})| \le C l^{j-n} w_{l,k}^{-1}(x), \quad
|\Phi_{n-k+1}^{\star(j)}(x, \la_{l,k,0}) - \Phi_{n-k+1}^{\star(j)}(x, \la_{l,k,1})| \le C l^{\nu - n} w_{l,k}^{-1}(x) \xi_l,
\end{equation}
where $l \ge 1$, $k = \overline{1,n-1}$, $x \in [0,1]$, and $C = C(Q,\de)$. Using \eqref{estPhi*} together with \eqref{asymptbe} and \eqref{defxi}, we obtain Lemma~\ref{lem:eta}.

Applying \eqref{asymptbe}, \eqref{estPhi*}, and the estimates of Lemma~\ref{lem:estPhik} to the definitions of the functions $\psi_v(x)$, $\tilde \psi_v(x)$, and $\tilde R_{v_0, v}(x)$ from Section~\ref{sec:maineq}, we arrive at \eqref{estpsiR}. Suppose that the model vector $\tilde \tau \in W_{simp}$ is fixed and the data $\{ \la_{l,k}, \be_{l,k} \}_J$ satisfy the assumptions $\Xi \le \Omega$ and \eqref{bbound}. Then the constant $C$ in the estimates \eqref{estpsiR} for $\tilde \psi_v(x)$ and $\tilde R_{v_0,v}(x)$ depends only on $\Omega$ and $\de$. Under these conditions, the relation \eqref{estop} implies \eqref{uniR}.

\section{Spectral data asymptotics} \label{app:calc}

In this appendix, we derive the asymptotic relations for the spectral characteristics $\la_{l,k}$, $\mu_{l,k}$, and $\be_{l,k}$ in the cases $n = 3$ and $n = 4$
by using the standard method \cite{Nai68}.

\subsection{Basic strategy}

Let $n \ge 2$ and $k \in \{1, 2, \dots, n-1 \}$ be fixed. Recall that the eigenvalues $\{ \la_{l,k} \}_{l \ge 1}$ of the problem $\mathcal L_k$ are the zeros of the determinant $\Delta(\la) := \Delta_{k,k}(\la)$ given by \eqref{defDelta}.

Consider the sector $\Gamma_1$ given by \eqref{defGa} and the Birkhoff solutions $y_k(x,\rho)$ ($k = \overline{1,n}$) defined in the extended sector $\rho \in \Gamma_{1,h,\rho_*}$ according to Proposition~\ref{prop:Birk}. 
Suppose that $\la = \rho^n$, $\rho \in \Gamma_{1,h,\rho_*}$.
Expand the solutions $\mathcal C_k(x,\la)$ ($k = \overline{1,n}$) over the Birkhoff solutions:
\begin{equation} \label{expandC}
[\mathcal C_k^{(j-1)}(x,\la)]_{k,j = 1}^n = [y_k^{(j-1)}(x,\rho)]_{k,j=1}^n A(\rho), 
\end{equation}
where $A(\rho)$ is an $(n \times n)$ matrix function. Then, we get the relation
\begin{equation} \label{DelD}
\Delta(\la) = D(\rho) \det A(\rho),
\end{equation}
where $D(\rho)$ is some determinant composed of the functions $y_k^{(\nu)}(x,\rho)$ at $x \in \{ 0, 1\}$ and $\det A(\rho) \ne 0$ for sufficiently large $|\rho|$, $\rho \in \Gamma_{1,h,\rho_*}$. Consequently, for large $l$, there holds $\la_{l,k} = \rho_{l,k}^n$, where $\rho_{l,k}$ are the zeros of $D(\rho)$. Thus, it remains to find the asymptotics of $D(\rho)$ and its zeros.

Analogously, the eigenvalues $\{ \mu_{l,k} \}_{l \ge 1}$ of the problem $\mathcal M_k$ \eqref{eqv}, \eqref{bcmu} coincide with the zeros of $\Delta^+(\la) := \Delta_{k+1,k}(\la)$. Using \eqref{expandC}, we get the representation
\begin{equation} \label{DelDp}
\Delta^+(\la) = -D^+(\rho) \det A(\rho),
\end{equation}
so it remains to find the asymptotics of the zeros of $D^+(\rho)$.

Proceed to the weight numbers.
For large $l$, the eigenvalues $\la_{l,k}$ are simple zeros of $\Delta(\la)$, so the relation \eqref{beDe} holds.
Taking \eqref{DelD} and \eqref{DelDp} into account, we obtain
\begin{equation} \label{calcbe}
\be_{l,k} = n \rho_{l,k}^{n-1} \frac{D^+(\rho_{l,k})}{\frac{d}{d\rho} D(\rho_{l,k})}.
\end{equation}

In this appendix, we use the notation
$\Upsilon(\rho)$ for various functions satisfying $\Upsilon(\rho) \to 0$ as $|\rho| \to \infty$ in $\Gamma_{1,h,\rho_*}$ and $\{ \Upsilon(\rho_l) \} \in l_2$ for any non-condensing sequence $\{ \rho_l \} \subset \Gamma_{1,h,\rho_*}$. A sequence $\{ \rho_l \}_{l = 1}^{\infty}$ is called non-condensing if
$$
\sup_{r > 0}(N(r+1) - N(r)) < \infty, \quad N(r) := \# \{ l \in \mathbb N \colon |\rho_l| \le r \}.
$$

The notation $\Upsilon(x,\rho)$ denotes any function that possesses the same properties as $\Upsilon(\rho)$ for each fixed $x \in [0,1]$.

In order to derive asymptotic formulas for $D(\rho)$ and $D^+(\rho)$, we use the well-known asymptotics for the Birkhoff solutions and their derivatives (see, e.g., \cite[\S4]{Nai68}):
\begin{equation} \label{asympty}
\def\arraystretch{2.2}
\left.
\begin{array}{l}
y_k(x,\rho) = \exp(\rho \om_k x) \left( 1 + \displaystyle\sum\limits_{s = 1}^{n-1} \dfrac{q_s(x)}{(\rho \om_k)^s} + \eps(x,\rho)\right), \\
y_k^{(\nu)}(x,\rho) = (\rho \om_k)^{\nu} \exp(\rho \om_k x) \left( 1 + \displaystyle\sum\limits_{s = 1}^{n-1} \dfrac{q_{s \nu}(x)}{(\rho \om_k)^s} + \eps(x,\rho) \right), \quad \nu = \overline{1,n-1},
\end{array} \quad \right\}
\end{equation}
for $k = \overline{1,n}$, $|\rho| \to \infty$, $\rho \in \Gamma_{1,h,\rho_*}$. Here the notation $\eps(x,\rho)$ means various functions of form $\dfrac{\Upsilon(x,\rho)}{\rho^{n-1}}$. This form of the remainder terms in \eqref{asympty} follows from the results of \cite{KS24}. We use the formulas for the functions $q_s(x)$ and $q_{s\nu}(x)$ presented in \cite[Theorem~1.2]{Yur22}:
\begin{equation} \label{defq}
\def\arraystretch{1.5}
\left.
\begin{array}{c}
q_1'(x) = -\dfrac{1}{n} p_{n-2}(x), \quad
q_2'(x) = -\dfrac{1}{n} \bigl( C_n^2 q_1''(x) + p_{n-2}(x) q_1(x) + p_{n-3}(x)\bigr), \\
q_3'(x) = -\dfrac{1}{n} \Bigl( C_n^2 q_2''(x) + C_n^3 q_1'''(x) + p_{n-2}(x)\bigl(q_2(x) + C_{n-2}^1 q_1'(x)\bigr) + p_{n-3}(x) q_1(x) + p_{n-4}(x)\Bigr), \\
q_{s\nu}(x) = \displaystyle\sum\limits_{r = 0}^{\nu} C_{\nu}^r q_{s-r}^{(r)}(x), \quad 
q_0(x) = 1, \quad q_s(x) = 0 \: \text{for} \: s < 0.
\end{array} \right\}
\end{equation}
 
Integration constants for the differential equations in \eqref{defq} can be chosen arbitrarily.

For finding the coefficients in the asymptotic expansions of the characteristic functions and their zeros, we use symbolic computations in Python \cite{py}.

By virtue of \cite[Lemma~2]{Bond22}, the Weyl functions associated with equations \eqref{eqv} and \eqref{eqstar} satisfy the relation $M_k(\la) = M_{n-k}^{\star}(\la)$ for $k = \overline{1,n-1}$. Consequently, in view of \eqref{defbe} and \eqref{relmjk}, there holds
\begin{equation} \label{relstar}
\la_{l,k} = \la_{l,n-k}^{\star}, \quad \mu_{l,k} = \mu_{l,n-k}^{\star}, \quad \be_{l,k} = \be_{l,n-k}^{\star}, \quad k = \overline{1,n-1},
\end{equation}
which allows us to reduce the amount of computations.

\subsection{Case $n = 3$} \label{app:calc3}

This subsection contains the proof of Theorem~\ref{thm:asymptsd3}. Here, we use the notations:
\begin{itemize}
\item $\om_1 = \exp(2 \pi i/3)$, $\om_2 = \exp(-2 \pi i/3)$, and $\om_3 = 1$ are the roots $\sqrt[3]{1}$ satisfying \eqref{order} for $\rho \in \Gamma_1$.
\item $\eps(\rho) = \dfrac{\Upsilon(\rho)}{\rho^2}$.
\item $\{ \eps_l \}$ denotes various sequences such that $\{ l^2 \eps_l \} \in l_2$.
\item $t := \frac{1}{3} \int_0^1 \tau_1(x) \, dx$, $t_0 := \frac{1}{3} \tau_1(0)$, $t_1 := \frac{1}{3} \tau_1(1)$, $s := \frac{1}{3} \int_0^1 \tau_0(x)\,dx$.
\end{itemize}

Using \eqref{defq} and the relations $p_1 = 2 \tau_1$, $p_0 = \tau_1' + \tau_0$, we find
\begin{gather*}
q_1(x) = -\frac{2}{3} \int_0^x \tau_1(\xi) \, d\xi, \quad q_2(x) = \frac{2}{9} \left( \int_0^x \tau_1(\xi) \, d\xi\right)^2 + \frac{1}{3} \tau_1(x) - \frac{1}{3} \int_0^x \tau_0(\xi) d\xi, \\ 
q_{11}(x) = q_1(x), \quad q_{21}(x) = q_2(x) - \frac{2}{3} \tau_1(x).
\end{gather*}

According to our notations, we have
\begin{equation} \label{valq}
\def\arraystretch{1.3}
\left.\begin{array}{c}
q_1(0) = q_{11}(0) = 0, \quad q_2(0) = t_0, \quad q_{21}(0) = -t_0,  \\
q_1(1) = q_{11}(1) = -2t, \quad q_2(1) = 2 t^2 + t_1 - s, \quad q_{21}(1) = 2 t^2 - t_1 - s.
\end{array}\quad\right\}
\end{equation}

The eigenvalues $\{ \la_{l,1} \}_{l \ge 1}$ of the problem $\mathcal L_1$ \eqref{eqv3}--\eqref{bc31} coincide with the zeros of the characteristic determinant
$$
\Delta(\la) := \Delta_{1,1}(\la) = \begin{vmatrix}
\mathcal C_1(0,\la) & \mathcal C_2(0,\la) & \mathcal C_3(0,\la) \\
\mathcal C_1'(1,\la) & \mathcal C_2'(1,\la) & \mathcal C_3'(1,\la) \\
\mathcal C_1(1,\la) & \mathcal C_2(1,\la) & \mathcal C_3(1,\la)
\end{vmatrix}.
$$

The relation \eqref{DelD} holds with
\begin{equation} \label{defD1}
D(\rho) := \begin{vmatrix}
                y_1(0,\rho) & y_2(0,\rho) & y_3(0,\rho) \\
                y_1'(1,\rho) & y_2'(1,\rho) & y_3'(1,\rho) \\
                y_1(1,\rho) & y_2(1,\rho) & y_3(1,\rho)
           \end{vmatrix}.
\end{equation}

For brevity, denote $\om := \om_1$.
Substituting \eqref{asympty} for $n = 3$ together with \eqref{valq} into \eqref{defD1}, we get
\begin{align*}
& D(\rho) = \rho \exp(\rho (\om_2 + \om_3)) d(\rho), \\
& d(\rho) = r_1(\rho) - r_2(\rho) \exp(\rho(\om_1-\om_2)) + \eps(\rho), \\
& r_1(\rho) = \om^2 - 1 + \frac{2t(\om - \om^2)}{\rho} + \frac{(1-\om)(s + 2t^2 + t_0 + t_1)}{\rho^2}, \\
& r_2(\rho) = \om-1 + \frac{2 t(\om^2 - \om)}{\rho} + \frac{(1-\om^2)(s + 2t^2 + t_0 + t_1)}{\rho^2}.
\end{align*}

The zeros of $d(\rho)$ for large $|\rho|$ satisfy the relation
\begin{equation} \label{rho21}
\rho = \frac{1}{\om_2 - \om_1} \ln \frac{r_2(\rho)}{r_1(\rho)} + \eps(\rho).
\end{equation}
Consequently, we get
$$
\rho_{l,1} = \frac{1}{2 i \sin \frac{\pi}{3}} \left( 2 \pi i l + \frac{\pi i}{3} + \frac{2t(\om^2 - \om)}{\rho_{l,1}} + \frac{(s + t_0 + t_1)(\om - \om^2)}{\rho_{l,1}^2} + \eps_l \right),
$$
which implies
\begin{equation*}
\rho_{l,1} = \frac{\pi}{\sin\frac{\pi}{3}} \left( l + \frac{1}{6} - \frac{\theta}{2 \pi^2 \bigl(l + \frac{1}{6}\bigr)} + \frac{\sqrt 3 (\sigma + \theta_0 + \theta_1)}{8 \pi^3 l^2} + \eps_l \right),
\end{equation*}
according to the notations \eqref{const3}. Finding $\rho_{l,1}^3$, we arrive at \eqref{asymptla31}. 

\smallskip

Proceed to obtaining the asymptotics for $\mu_{l,1}$. We have
$$
\Delta^+(\la) := \Delta_{2,1}(\la) = 
-\begin{vmatrix}
\mathcal C_1'(0,\la) & \mathcal C_2'(0,\la) & \mathcal C_3'(0,\la) \\
\mathcal C_1'(1,\la) & \mathcal C_2'(1,\la) & \mathcal C_3'(1,\la) \\
\mathcal C_1(1,\la) & \mathcal C_2(1,\la) & \mathcal C_3(1,\la)
\end{vmatrix}, \quad
D^+(\rho) = 
\begin{vmatrix}
y_1'(0,\rho) & y_2'(0,\rho) & y_3'(0,\rho) \\
y_1'(1,\rho) & y_2'(1,\rho) & y_3'(1,\rho) \\
y_1(1,\rho) & y_2(1,\rho) & y_3(1,\rho)
\end{vmatrix}.
$$

Substituting \eqref{asympty} and \eqref{valq} into the formula for $D^+(\rho)$, we derive
\begin{align*}
& D^+(\rho) = \rho^2 \exp(\rho (\om_2 + \om_3)) d^+(\rho), \\
& d^+(\rho) = r_1^+(\rho) - r_2^+(\rho) \exp(\rho(\om_1 - \om_2)) + \eps(\rho), \\
& r_1^+(\rho) = 1 - \om + \frac{2t(\om^2 - 1)}{\rho} + \frac{(\om-\om^2)(s + 2t^2 - t_0 + t_1)}{\rho^2}, \\
& r_2^+(\rho) = 1 - \om^2 + \frac{2t(\om-1)}{\rho} + \frac{(\om^2 - \om)(s + 2t^2 - t_0 + t_1)}{\rho^2}.
\end{align*}

The zeros of $d^+(\rho)$ for large $|\rho|$ satisfy the relation
\begin{equation} \label{rho21p}
\rho = \frac{1}{\om_2 - \om_1} \ln \frac{r_2^+(\rho)}{r_1^+(\rho)} + \eps(\rho).
\end{equation}
Consequently, they have the form
\begin{align*}
\varrho_{l,1} & = \frac{1}{2 i \sin \frac{\pi}{3}} \left( 2 \pi i l - \frac{\pi i}{3} + \frac{2t(\om^2 - \om)}{\varrho_{l,1}} + \frac{(s - t_0 + t_1)(\om - \om^2)}{\varrho_{l,1}^2} + \eps_l \right) \\
& = \frac{\pi}{\sin\frac{\pi}{3}} \left( l - \frac{1}{6} - \frac{\theta}{2 \pi^2 \bigl(l - \frac{1}{6}\bigr)} + \frac{\sqrt 3 (\sigma - \theta_0 + \theta_1)}{8 \pi^3 l^2} + \eps_l \right)
\end{align*}
according to the notations \eqref{const3}. Computing $\mu_{l,1} = \varrho_{l,1}^3$ implies \eqref{asymptmu31}.

Note that the numbering in \eqref{asymptla31} and \eqref{asymptmu31} starts from $l = 1$, since this does not depend on $\tau$ and can be directly verified for $\tau = \{ 0, 0 \}$. Moreover, this fact for $\la_{l,1}$ is known from previous studies (see, e.g., \cite{Bond23-res}).

\smallskip

Proceed to obtaining the asymptotics for the weight numbers $\be_{l,1}$.
Taking \eqref{calcbe} and the equality $D(\rho_{l,k}) = 0$ into account, we derive
\begin{equation} \label{calcbe1}
\be_{l,k} = n \la_{l,k} \frac{d^+(\rho_{l,k})}{\frac{d}{d\rho}d(\rho_{l,k})}.
\end{equation}

Symbolic computations show that
\begin{align} \label{smcalcbe}
\frac{d^+(\rho)}{\frac{d}{d\rho} d(\rho)} & = \frac{r_1^+(\rho) r_2(\rho) - r_2^+(\rho) r_1(\rho)}{\frac{d}{d\rho}r_1(\rho) r_2(\rho) - \frac{d}{d\rho} r_2(\rho) r_1(\rho) + (\om_2 - \om_1) r_1(\rho) r_2(\rho)} \\ \nonumber & = -\left( 1 + \frac{2(t+t_0)}{\rho^2} + \eps(\rho)\right).
\end{align}

This together with \eqref{const3}, \eqref{asymptla31}, and \eqref{calcbe1} imply the asymptotic formula \eqref{asymptbe312} for $\be_{l,1}$.

The asymptotics for $\la_{l,2}$, $\mu_{l,2}$, and $\be_{l,2}$ follow from the relations \eqref{relstar}: $\la_{l,2} = \la_{l,1}^{\star}$, $\mu_{l,2} = \mu_{l,1}^{\star}$, and $\be_{l,2} = \be_{l,1}^{\star}$. The spectral characteristics $\la_{l,1}^{\star}$, $\mu_{l,1}^{\star}$, and $\be_{l,1}^{\star}$ of equation \eqref{eqstar} have the asymptotics similar to $-\la_{l,1}$, $-\mu_{l,1}$, and $-\be_{l,1}$, respectively, for the vector $\{ -\tau_0, \tau_1 \}$. Thus, one immediately gets \eqref{asymptla32}, \eqref{asymptmu32}, and \eqref{asymptbe312} for $k = 2$ from \eqref{asymptla31}, \eqref{asymptmu31}, and \eqref{asymptbe312} for $k  = 1$, respectively. Also, we have obtained the asymptotics for $\la_{l,2}$, $\mu_{l,2}$, and $\be_{l,2}$ in the straightforward way by symbolic computations in \cite{py} for checking.

\subsection{Case $n = 4$} \label{app:calc4}

In this subsection, we derive the asymptotic formulas for the data $\la_{l,k}$, $\mu_{l,k}$, and $\be_{l,k}$ associated with any vector $\tau = \{ \tau_0, \tau_1, \tau_2 \} \in \mathbf{W}$. 
Here, we use the notations:
\begin{itemize}
\item $\om_1 = -1$, $\om_2 = i$, $\om_3 = -i$, and $\om_4 = 1$ are the roots $\sqrt[4]{1}$ satisfying \eqref{order} for $\rho \in \Gamma_1$.
\item $\eps(\rho) = \dfrac{\Upsilon(\rho)}{\rho^3}$.
\item $\{ \eps_l \}$ denotes various sequences such that $\{ l^3 \eps_l \} \in l_2$.
\item $t := \dfrac{1}{4} \displaystyle\int_0^1 \tau_2(x) \, dx$, $t_0 := \dfrac{1}{8} \tau_2(0)$, $t_1 := \dfrac{1}{8} \tau_2(1)$, $t_0' := \dfrac{1}{16}\tau_2'(0)$, $t_1' := \dfrac{1}{16} \tau_2'(1)$, \\ $s := \dfrac{1}{2} \displaystyle\int_0^1 \tau_1(x) \, dx$, $s_0 := \dfrac{1}{2} \tau_1(0)$, $s_1 := \dfrac{1}{2} \tau_1(1)$, $u := \dfrac{1}{32} \displaystyle\int_0^1 \tau_2^2(x) \, dx - \dfrac{1}{4} \displaystyle\int_0^1 \tau_0(x) \, dx$.
\item $\tau_{\nu}^{(-1)}(x) := \displaystyle\int_0^x \tau_{\nu}(\xi) \, d\xi$, $\nu = 0, 1, 2$.
\end{itemize}

Integrating the equations in \eqref{defq} and taking into account the relations
$p_2 = \tau_2$, $p_1 = \tau_2' + 2 \tau_1$, $p_0 = \tau_1' + \tau_0$, we obtain
\begin{equation*}
q_1(x) = -\frac{1}{4} \tau_2^{(-1)}(x), \quad
q_2(x) = \frac{1}{8} \tau_2(x) + \frac{1}{32} \bigl( \tau_2^{(-1)}(x) \bigr)^2 - \frac{1}{2} \tau_1^{(-1)}(x), 
\end{equation*}
\vspace*{-0.5cm}
\begin{multline*}
q_3(x) = \frac{1}{16} \tau_2'(x) - \frac{1}{32} \tau_2(x) \tau_2^{(-1)}(x) + \frac{1}{2} \tau_1(x) + \frac{1}{32} \int_0^x \tau_2^2(\xi) \,d\xi \\ + \frac{1}{8} \tau_1^{(-1)}(x) \tau_2^{(-1)}(x) - \frac{1}{3 \cdot 2^7} \bigl( \tau_2^{(-1)}(x)\bigr)^3 - \frac{1}{4} \tau_0^{(-1)}(x), 
 \end{multline*}
\vspace*{-0.5cm}
\begin{gather*}
q_{1\nu}(x) = q_1(x), \quad q_{2\nu}(x) = q_2(x) + \nu q_1'(x), \quad \nu = 1, 2, \\
q_{31}(x) = q_3(x) + q_2'(x), \quad q_{32}(x) = q_3(x) + 2 q_2'(x) + q_1''(x). 
 \end{gather*}
Consequently, we have
\begin{equation} \label{valq4}
\def\arraystretch{1.3}
\left.
\begin{array}{c}
q_1(0) = q_{1\nu}(0) = 0, \quad q_1(1) = q_{1\nu}(1) = -t, \quad \nu = 1, 2, \\
q_1'(0) = -2 t_0, \quad q_1'(1) = -2 t_1, \quad q_1''(0) = -4t_0', \quad
q_1''(1) = -4t_1', \\
q_2(0) = t_0, \quad q_2(1) = t_1 + \tfrac{1}{2} t^2 - s, \quad
q_2'(0) = 2 t_0' - s_0, \quad
q_2'(1) = 2 t_1' - s_1 + 2 t_1 t, \\
q_{21}(0) = -t_0, \quad q_{21}(1) = \tfrac{1}{2} t^2- s - t_1, \quad
q_{22}(0) = -3t_0, \quad q_{22}(1) = -3t_1 + \tfrac{1}{2} t^2 - s, \\
q_3(0) = t_0' + s_0, \quad q_3(1) = t_1' + s_1 - t_1 t + u + ts - \tfrac{1}{6} t^3, \\
q_{31}(0) = 3 t_0', \quad q_{31}(1) = 3 t_1' + t_1 t + u + ts - \tfrac{1}{6} t^3, \\
q_{32}(0) = t_0' - s_0, \quad q_{32}(1) = t_1' - s_1 + 3 t_1 t + u + ts - \tfrac{1}{6} t^3.
\end{array} \right\}
\end{equation}

The eigenvalues $\la_{l,1}$ of the problem $\mathcal L_1$ \eqref{bc1} for equation \eqref{eqv4} coincide with the zeros of the characteristic determinant
$$
\Delta(\la) := \Delta_{1,1}(\la) = \begin{vmatrix}
\mathcal C_1(0,\la) & \mathcal C_2(0,\la) & \mathcal C_3(0,\la) & \mathcal C_4(0,\la) \\
\mathcal C_1''(1,\la) & \mathcal C_2''(1,\la) & \mathcal C_3''(1,\la) & \mathcal C_4''(1,\la) \\
\mathcal C_1'(1,\la) & \mathcal C_2'(1,\la) & \mathcal C_3'(1,\la) & \mathcal C_4'(1,\la) \\
\mathcal C_1(1,\la) & \mathcal C_2(1,\la) & \mathcal C_3(1,\la) & \mathcal C_4(1,\la)
\end{vmatrix}
$$

Therefore, the function $D(\rho)$ in \eqref{DelD} has the form
\begin{equation} \label{D41}
D(\rho) = \begin{vmatrix}
y_1(0,\rho) & y_2(0,\rho) & y_3(0,\rho) & y_4(0,\rho) \\
y_1''(1, \rho) & y_2''(1,\rho) & y_3''(1,\rho) & y_4''(1,\rho) \\
y_1'(1, \rho) & y_2'(1,\rho) & y_3'(1,\rho) & y_4'(1,\rho)  \\
y_1(1, \rho) & y_2(1,\rho) & y_3(1,\rho) & y_4(1,\rho)
\end{vmatrix}
\end{equation}

Substituting \eqref{asympty} for $n = 4$ together with \eqref{valq4} into \eqref{D41}, we obtain
\begin{align*}
& D(\rho) = \rho^2 \exp(\rho (\om_2 + \om_3 + \om_4)) d(\rho), \\
& d(\rho) = r_1(\rho) - r_2(\rho) \exp(\rho(\om_1 - \om_2)) + \eps(\rho), \\
& r_1(\rho) = 4i - \frac{4 i t}{\rho} + \frac{4 i\bigl(s + \tfrac{1}{2} t^2 + t_0 + t_1 \bigr)}{\rho^2} - \frac{4i (t_0' - t_1' + s_0 + s_1 + t(t_0 + t_1) + \tfrac{1}{6} t^3 + st - u)}{\rho^3},\\
& r_2(\rho) = -4 + \frac{4 i t}{\rho} + \frac{4 \bigl(s + \tfrac{1}{2} t^2 + t_0 + t_1 \bigr)}{\rho^2} - \frac{4i (t_0' - t_1' + s_0 + s_1 + t(t_0 + t_1) + \tfrac{1}{6} t^3 + st - u)}{\rho^3}.
\end{align*}

The zeros of $d(\rho)$ for large $|\rho|$ satisfy the relation \eqref{rho21}, which implies
\begin{multline*}
\rho_{l,1} = \frac{e^{-\frac{\pi i}{4}}}{\sqrt 2} \left( 2 \pi i l + \frac{\pi i}{2} + \frac{t(1-i)}{\rho_{l,1}} - \frac{2(s + t_0 + t_1)}{\rho_{l,1}^2} + \frac{(1 + i)(t_0' - t_1' + s_0 + s_1 - u)}{\rho_{l,1}^3} + \eps_l\right).
\end{multline*}
Hence
\begin{equation} \label{rho41}
\rho_{l,1} = e^{\frac{\pi i}{4}} \Biggl( \sqrt 2 \pi  l + \frac{\pi}{2\sqrt 2} - \frac{t}{\sqrt 2 \pi l + \frac{\pi}{2\sqrt 2}} + \frac{\sqrt 2(s + t_0 + t_1)}{\bigl( \sqrt 2 \pi l + \frac{\pi}{2 \sqrt 2}\bigr)^2} - \frac{t_0' - t_1' + s_0 + s_1 - u + t^2}{(\sqrt 2 \pi l)^3} + \eps_l\Biggr).
\end{equation}

Computing $\rho_{l,1}^4$ and taking \eqref{const4} into account, we arrive at the asymptotics \eqref{asymptla41} for $\la_{l,1}$.
 
In order to obtain the asymptotics for the eigenvalues $\mu_{l,1}$ of the boundary value problem $\mathcal M_1$ \eqref{eqv4}, \eqref{bcmu1}, we consider the functions
\begin{align}  \nonumber
\Delta^+(\la) := \Delta_{2,1}(\la) & = 
-\begin{vmatrix}
\mathcal C_1'(0,\la) & \mathcal C_2'(0,\la) & \mathcal C_3'(0,\la) & \mathcal C_4'(0,\la) \\
\mathcal C_1''(1,\la) & \mathcal C_2''(1,\la) & \mathcal C_3''(1,\la) & \mathcal C_4''(1,\la) \\
\mathcal C_1'(1,\la) & \mathcal C_2'(1,\la) & \mathcal C_3'(1,\la) & \mathcal C_4'(1,\la) \\
\mathcal C_1(1,\la) & \mathcal C_2(1,\la) & \mathcal C_3(1,\la) & \mathcal C_4(1,\la)
\end{vmatrix}, \\ \label{D41p}
D^+(\rho) & = \begin{vmatrix}
y_1'(0,\rho) & y_2'(0,\rho) & y_3'(0,\rho) & y_4'(0,\rho) \\
y_1''(1, \rho) & y_2''(1,\rho) & y_3''(1,\rho) & y_4''(1,\rho) \\
y_1'(1, \rho) & y_2'(1,\rho) & y_3'(1,\rho) & y_4'(1,\rho)  \\
y_1(1, \rho) & y_2(1,\rho) & y_3(1,\rho) & y_4(1,\rho)
\end{vmatrix}
\end{align}

Substituting \eqref{asympty} for $n = 4$ and \eqref{valq4} into \eqref{D41p}, we find
\begin{align*}
& D^+(\rho) = \rho^4 \exp(\rho(\om_2 + \om_3 + \om_4)) d^+(\rho), \\
& d^+(\rho) = r_1^+(\rho) - r_2^+(\rho) \exp(\rho(\om_1 - \om_2)) + \eps(\rho), \\
& r_1^+(\rho) = -4i + \frac{4it}{\rho} - \frac{4i\bigl( s + \frac{1}{2} t^2 + t_1 - t_0\bigr)}{\rho^2} + \frac{4i \bigl( 3 t_0' - t_1' + s_1 + t(t_1 - t_0) + \frac{1}{6} t^3 + st - u\bigr)}{\rho^3},\\
& r_2^+(\rho) = -4i - \frac{4t}{\rho} + \frac{4i \bigl(s + \frac{1}{2} t^2 + t_1 - t_0 \bigr)}{\rho^2} + \frac{4 \bigl(3 t_0' - t_1' + s_1 + t(t_1 - t_0) + \frac{1}{6} t^3 + st - u\bigr)}{\rho^3}.
\end{align*}

The zeros $\{ \varrho_{l,1}\}$ of $d^+(\rho)$ for large $l$ have the asymptotics
\eqref{rho21p}, which implies
\begin{align*}
\varrho_{l,1} & = \frac{e^{-\frac{\pi i}{4}}}{\sqrt 2} \left( 2 \pi i l + \frac{t(1-i)}{\varrho_{l,1}} - \frac{2(s - t_0 + t_1)}{\varrho_{l,1}^2} + \frac{(1 + i)(s_1 + 3 t_0' - t_1' - u)}{\varrho_{l,1}^3} + \eps_l\right) \\
& = 
e^{\frac{\pi i}{4}} \Biggl( \sqrt 2 \pi  l - \frac{t}{\sqrt 2 \pi l} + \frac{\sqrt 2(s - t_0 + t_1)}{(\sqrt 2 \pi l)^2} - \frac{s_1 + 3t_0' - t_1' - u + t^2}{(\sqrt 2 \pi l)^3} + \eps_l\Biggr).
\end{align*}
Computing $\mu_{l,1} = \varrho_{l,1}^4$ and taking \eqref{const4} into account, we arrive at \eqref{asymptmu41} for $\mu_{l,1}$.

Using the relations \eqref{calcbe1} and \eqref{smcalcbe}, which have the same form as in the case $n = 3$, we obtain
\begin{equation} \label{smbe1}
\be_{l,1} = -4 \la_{l,1} \left( 1 + \frac{i (t + 2t_0)}{\rho_{l,1}^2} + \frac{(1-i)(2s + s_0 + 2t_0 + 2t_1 - 2t_0')}{\rho_{l,1}^3} + \eps_l\right).
\end{equation}

Substituting \eqref{rho41} and \eqref{const4} into \eqref{smbe1}, we arrive at the asymptotics \eqref{asymptbe41} for $\be_{l,1}$. The formulas \eqref{asymptla41}, \eqref{asymptmu41}, and \eqref{asymptbe41} for $k = 3$ follow from the relations \eqref{relstar}: $\la_{l,3} = \la_{l,1}^{\star}$, $\mu_{l,3} = \mu_{l,1}^{\star}$, and $\be_{l,3} = \be_{l,1}^{\star}$, respectively. Indeed, for $n = 4$, equation \eqref{eqstar} has the form \eqref{eqv4} with the coefficients $\{ \tau_0, -\tau_1, \tau_2 \}$. Therefore, in formulas \eqref{asymptla41}, \eqref{asymptmu41}, and \eqref{asymptbe41}, only the sign of $\sigma$, $\sigma_0$, and $\sigma_1$ changes. For verification, the asymptotics for the case $k = 3$ have been explicitly derived in \cite{py}.

Similarly, we get the asymptotics of the eigenvalues $\la_{l,2}$ of the boundary value problem $\mathcal L_2$ \eqref{bc2}. They coincide with the zeros of the characteristic function
$$
\Delta(\la) := \Delta_{2,2}(\la) =
\begin{vmatrix}
\mathcal C_1(0,\la) & \mathcal C_2(0,\la) & \mathcal C_3(0,\la) & \mathcal C_4(0,\la) \\
\mathcal C_1'(0,\la) & \mathcal C_2'(0,\la) & \mathcal C_3'(0,\la) & \mathcal C_4'(0,\la) \\
\mathcal C_1'(1,\la) & \mathcal C_2'(1,\la) & \mathcal C_3'(1,\la) & \mathcal C_4'(1,\la) \\
\mathcal C_1(1,\la) & \mathcal C_2(1,\la) & \mathcal C_3(1,\la) & \mathcal C_4(1,\la)
\end{vmatrix}.
$$
Therefore, the function $D(\rho)$ in \eqref{DelD} has the form
\begin{equation} \label{D42}
D(\rho) =
\begin{vmatrix}
y_1(0,\rho) & y_2(0,\rho) & y_3(0,\rho) & y_4(0,\rho) \\
y_1'(0,\rho) & y_2'(0,\rho) & y_3'(0,\rho) & y_4'(0,\rho) \\
y_1'(1,\rho) & y_2'(1,\rho) & y_3'(1,\rho) & y_4'(1,\rho) \\
y_1(1,\rho) & y_2(1,\rho) & y_3(1,\rho) & y_4(1,\rho) 
\end{vmatrix}.
\end{equation}

Substituting \eqref{asympty} for $n = 4$ together with \eqref{valq4} into \eqref{D42}, we obtain
\begin{align*}
& D(\rho) = \rho^2 \exp(\rho(\om_3 + \om_4)) d(\rho), \\
& d(\rho) = r_1(\rho) - r_2(\rho) \exp(\rho(\om_2 - \om_3)) + \eps(\rho), \\
& r_1(\rho) = -2i - \frac{2t(1-i)}{\rho} + \frac{4\bigl(t_0 + t_1 + \frac{1}{2}t^2\bigr)}{\rho^2} - \frac{(1 + i) \bigl(6(t_1' - t_0') + 4t(t_0 + t_1) + \frac{2}{3} t^3 + 2u\bigr)}{\rho^3}\\
& r_2(\rho) = 2i - \frac{2t(1+i)}{\rho} + \frac{4\bigl(t_0 + t_1 + \frac{1}{2} t^2\bigr)}{\rho^2} - \frac{(1-i)\bigl(6(t_1' - t_0') + 4t(t_0 + t_1) + \frac{2}{3} t^3 + 2u\bigr)}{\rho^3}.
\end{align*}

The zeros of $d(\rho)$ for large $|\rho|$ satisfy the relation
$$
\rho = \frac{1}{\om_3 - \om_2} \ln \frac{r_2(\rho)}{r_1(\rho)} + \eps(\rho).
$$

Symbolic computations show that
$$
\rho_{l,2} = \pi l + \frac{\pi}{2} - \frac{t}{\rho_{l,2}} + \frac{2(t_1 + t_0)}{\rho_{l,2}^2} - \frac{3(t_1' - t_0') + u}{\rho_{l,2}^3} + \eps_l.
$$
Hence
\begin{equation} \label{rho42}
\rho_{l,2} = \pi l + \frac{\pi}{2} - \frac{t}{\pi l + \frac{\pi}{2}} + \frac{2(t_0 + t_1)}{\bigl(\pi l + \frac{\pi}{2} \bigr)^2} - \frac{3(t_1' - t_0') + u + t^2}{(\pi l)^3} + \eps_l.
\end{equation}

Computing $\rho_{l,2}^4$ and using \eqref{const4}, we arrive at the asymptotics \eqref{asymptla42}. 

For obtaining the asymptotics for the eigenvalues $\mu_{l,2}$, we have
\begin{equation} \label{D42p}
D^+(\rho) = 
\begin{vmatrix}
y_1(0,\rho) & y_2(0,\rho) & y_3(0,\rho) & y_4(0,\rho) \\
y_1''(0,\rho) & y_2''(0,\rho) & y_3''(0,\rho) & y_4''(0,\rho) \\
y_1'(1,\rho) & y_2'(1,\rho) & y_3'(1,\rho) & y_4'(1,\rho) \\
y_1(1,\rho) & y_2(1,\rho) & y_3(1,\rho) & y_4(1,\rho) 
\end{vmatrix}.
\end{equation}

Substituting \eqref{asympty} for $n = 4$ together with \eqref{valq4} into \eqref{D42p}, we obtain
\begin{align*}
& D^+(\rho) = \rho^3 \exp(\rho(\om_3 + \om_4)) d^+(\rho), \\
& d^+(\rho) = r_1^+(\rho) - r_2^+(\rho) \exp(\rho(\om_2 - \om_3)) + \eps(\rho), \\
& r_1^+(\rho) = 2 + 2i - \frac{4it}{\rho} - \frac{4(1-i)\bigl(t_1 + \frac{1}{2}t^2 \bigr)}{\rho^2} + \frac{4\bigl( 3 t_1' - t_0' + 2 t_1 t + \frac{1}{3} t^3 + u\bigr)}{\rho^3},\\
& r_2^+(\rho) = 2-2i + \frac{4it}{\rho} - \frac{4(1+i)\bigl( t_1 + \frac{1}{2} t^2 \bigr)}{\rho^2} + \frac{4\bigl(3 t_1' - t_0' + 2 t_1 t+ \frac{1}{3} t^3 + u \bigr)}{\rho^3}.
\end{align*}

The zeros of $d^+(\rho)$ for large $|\rho|$ satisfy the relation
$$
\rho = \frac{1}{\om_3 - \om_2} \ln \frac{r_2^+(\rho)}{r_1^+(\rho)} + \eps(\rho).
$$
Consequently, we obtain
\begin{align*}
\varrho_{l,2} & = \pi l + \frac{\pi}{4} - \frac{t}{\varrho_{l,2}} + \frac{2 t_1}{\varrho_{l,2}^2} - \frac{3 t_1' - t_0' + u}{\varrho_{l,2}^3} + \eps_l \\
& = \pi l + \frac{\pi}{4} - \frac{t}{\pi l + \frac{\pi}{4}} + \frac{2 t_1}{\bigl(\pi l + \frac{\pi}{4} \bigr)^2} - \frac{3 t_1' - t_0' + u + t^2}{(\pi l)^3} + \eps_l.
\end{align*}
Computing $\mu_{l,2} = \varrho_{l,2}^4$ and using \eqref{const4}, we arrive at \eqref{asymptmu42}.

It is known from the previous studies \cite{McL78, Pol23, Bond24-irkutsk} that the numbering in \eqref{asymptla41} and \eqref{asymptla42} starts from $l = 1$. The same conclusion for \eqref{asymptmu41} and \eqref{asymptmu42} follows from the explicit expressions of characteristic functions for $\tau = \{ 0, 0, 0\}$.

Proceed to estimating $\be_{l,2}$. Symbolic computations show that 
\begin{align*} 
\frac{d^+(\rho)}{\frac{d}{d\rho} d(\rho)} & = \frac{r_1^+(\rho) r_2(\rho) - r_2^+(\rho) r_1(\rho)}{\frac{d}{d\rho}r_1(\rho) r_2(\rho) - \frac{d}{d\rho} r_2(\rho) r_1(\rho) + (\om_3 - \om_2) r_1(\rho) r_2(\rho)} \\ \nonumber & = -\left( 1 + \frac{t + 2 t_0}{\rho^2} + \frac{4(t_0' - t_0 - t_1)}{\rho^3} + \eps(\rho)\right).
\end{align*}

Substituting \eqref{rho42} and using \eqref{calcbe1}, \eqref{const4}, we arrive at \eqref{asymptbe42}.

\section{Schur's test} \label{app:schur}

In this appendix, we present a discrete weighted version of Schur's test, which was originally proposed for integral operators in \cite{Schur11}. Applying Schur's test, we obtain an auxiliary estimate, which is used in the proof of \eqref{aD2} in Section~\ref{sec:sp}.

\begin{prop}[Schur's test] \label{prop:schur}
Suppose that $M_1, \, M_2 > 0$, $T_{k,n} \ge 0$, $u_n > 0$, and $v_n > 0$ for $k,n \ge 1$, and the following conditions are fulfilled:
\begin{enumerate}
\item $\sum\limits_{n = 1}^{\infty} T_{k,n} u_n \le M_1 v_k$, $k \ge 1$.
\item $\sum\limits_{k = 1}^{\infty} T_{k,n} v_k \le M_2 u_n$, $n \ge 1$.
\end{enumerate}
Then the operator $T$ given by $(T a)_k = \sum\limits_{n = 1}^{\infty} T_{k, n} a_n$ is bounded from $l_2$ to $l_2$ and
$\| T \|_{l_2 \to l_2} \le \sqrt{M_1 M_2}$.
\end{prop}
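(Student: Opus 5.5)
The statement is Schur's test: positive weights $u,v$ and a nonnegative kernel $T_{k,n}$ with the two weighted row/column bounds. I will prove it by the standard Cauchy--Schwarz argument with the weights inserted. The plan is to bound $|(Ta)_k|^2$ for an arbitrary $a \in l_2$ and then sum over $k$. Since $T_{k,n} \ge 0$, it suffices to treat $a_n \ge 0$, because $|(Ta)_k| \le (T|a|)_k$. This also settles absolute convergence of the series defining $(Ta)_k$ once the final bound is shown to be finite.

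\emph{Step 1 (pointwise bound).} For fixed $k$ I write $T_{k,n} a_n = \bigl(T_{k,n} u_n\bigr)^{1/2} \cdot \bigl(T_{k,n} u_n^{-1}\bigr)^{1/2} a_n$ and apply Cauchy--Schwarz:
$$
|(Ta)_k|^2 \le \Bigl(\sum_{n} T_{k,n} u_n\Bigr)\Bigl(\sum_{n} T_{k,n} u_n^{-1} |a_n|^2\Bigr) \le M_1 v_k \sum_{n} T_{k,n} u_n^{-1} |a_n|^2 ,
$$
using condition~1.

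\emph{Step 2 (summation in $k$).} I sum over $k$ and interchange the order of summation, which is legitimate because all terms are nonnegative (Tonelli). This gives
$$
\sum_k |(Ta)_k|^2 \le M_1 \sum_n u_n^{-1}|a_n|^2 \sum_k T_{k,n} v_k \le M_1 M_2 \sum_n |a_n|^2 ,
$$
where the last inequality is condition~2.

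\emph{Step 3 (conclusion).} The bound from Step~2 shows simultaneously that $Ta \in l_2$ and that every series $(Ta)_k$ converges absolutely, since $|(Ta)_k| \le (T|a|)_k < \infty$. Linearity is clear, so $\|T\|_{l_2\to l_2} \le \sqrt{M_1 M_2}$.

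The only point needing care is the interchange of summations and the convergence of the defining series. Both are handled by nonnegativity, so I expect no real obstacle.
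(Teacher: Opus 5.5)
Your proof is correct and follows the paper's argument: the same splitting $T_{k,n}a_n = \sqrt{T_{k,n}u_n}\,\sqrt{T_{k,n}/u_n}\,a_n$ with Cauchy--Schwarz and condition~1, then summation over $k$, interchange of summation order, and condition~2. Your remarks on nonnegativity, absolute convergence, and working with $|a_n|$ (the paper writes $a_n^2$) only make explicit what the paper leaves implicit.
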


\begin{proof}
Let $\{ a_n \}_{n \ge 1} \in l_2$. Using the Cauchy-Bunyakovsky-Schwarz inequality, we obtain
$$
\left( \sum_{n = 1}^{\infty} T_{k,n} a_n \right)^2 = \left( \sum_{n = 1}^{\infty} \sqrt{T_{k,n} u_n} \sqrt{\frac{T_{k,n}}{u_n}} a_n \right)^2 \le \sum_{n = 1}^{\infty} T_{k,n} u_n \sum_{n = 1}^{\infty} \frac{T_{k,n}}{u_n} a_n^2.
$$
Applying the assumptions~1 and~2 and changing the summation order, we derive
$$
\sum_{k = 1}^{\infty} \left( \sum_{n = 1}^{\infty} T_{k,n} a_n \right)^2 \le \sum_{k = 1}^{\infty} M_1 v_k \sum_{n = 1}^{\infty} \frac{T_{k,n}}{u_n} a_n^2 = M_1 \sum_{n = 1}^{\infty} \frac{a_n^2}{u_n} \sum_{k = 1}^{\infty} T_{k, n} v_k \le M_1 M_2 \sum_{n = 1}^{\infty} a_n^2, 
$$
which proves the proposition.
\end{proof}

\begin{lem} \label{lem:bound}
The operator $T$ given by
$$
(T a)_k = \sum_{n = 1}^{\infty} \frac{a_n}{\ln(n + 1) \bigl( |n - k| + 1\bigr)}, \quad k \ge 1,
$$
is bounded from $l_2$ to $l_2$.
\end{lem}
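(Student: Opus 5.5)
We apply Proposition~\ref{prop:schur} to the kernel
$$
T_{k,n} = \frac{1}{\ln(n+1)\,(|n-k|+1)}, \quad k,n \ge 1,
$$
with power-type test sequences. Constant weights fail, because $\sum_n (|n-k|+1)^{-1}$ diverges. We therefore take $u_n = n^{-1/2}$ and $v_k = k^{-1/2}\ln(k+1)$, or possibly $v_k = k^{-1/2}$ if the two conditions can be balanced that way. We then verify conditions~1 and~2 of Schur's test with constants $M_1, M_2$. The lemma follows with $\|T\|_{l_2 \to l_2} \le \sqrt{M_1 M_2}$.

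\textbf{Condition 1.} We must show $\sum_n \frac{n^{-1/2}}{\ln(n+1)(|n-k|+1)} \le M_1 v_k$. Split the sum into three ranges.
\begin{itemize}
\item For $n \le k/2$, the factor $(|n-k|+1)^{-1}$ is comparable to $k^{-1}$. Since $\sum_{n\le k/2} n^{-1/2}/\ln(n+1) \le C k^{1/2}/\ln(k+1)$, this range contributes $C k^{-1/2}/\ln(k+1)$.
\item For $k/2 < n \le 2k$, we have $n^{-1/2}/\ln(n+1) \approx k^{-1/2}/\ln(k+1)$ and $\sum_n (|n-k|+1)^{-1} \le C\ln(k+1)$. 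This range contributes $C k^{-1/2}$.
\item For $n > 2k$, the term is at most $C\, n^{-3/2}/\ln(n+1)$, and the tail sum is at most $C k^{-1/2}$.
\end{itemize}
Hence the left side is at most $C k^{-1/2}$, so $v_k = k^{-1/2}$ already suffices here.

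\textbf{Condition 2.} We must show $\frac{1}{\ln(n+1)} \sum_k \frac{k^{-1/2}}{|n-k|+1} \le M_2\, n^{-1/2}$. The same three-range split gives $\sum_k k^{-1/2}(|n-k|+1)^{-1} \le C n^{-1/2}\ln(n+1)$:
\begin{itemize}
\item the range $k \le n/2$ gives $C n^{-1/2}$;
\item the middle range gives $C n^{-1/2}\ln(n+1)$;
\item the range $k > 2n$ gives $\sum_{k>2n} k^{-3/2} \le C n^{-1/2}$.
\end{itemize}
The prefactor $1/\ln(n+1)$ cancels the logarithm, giving the bound $C n^{-1/2} = C u_n$. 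The choice $u_n = v_n = n^{-1/2}$ thus works for both conditions.

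\textbf{Main obstacle.} The delicate part is the diagonal range $|n-k| \le \max(n,k)/2$. There the harmonic sum produces a factor $\ln$, and it must be absorbed by the logarithm in the kernel. In condition~2 this is automatic, because $\ln(n+1)$ is attached to the fixed index. In condition~1 the logarithm sits on the summed index $n$, but within the diagonal range $\ln(n+1)$ is comparable to $\ln(k+1)$, so it again cancels. The case of small $k$ and $n$ (e.g.\ $n=1$) is handled by enlarging the constants.
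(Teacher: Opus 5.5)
Your proposal is correct and follows the paper's proof essentially verbatim: Schur's test with the weights $u_n = v_n = n^{-1/2}$, the three-range split ($n \le k/2$, $k/2 < n \le 2k$, $n > 2k$, and symmetrically in $k$), and the logarithm in the kernel absorbing the harmonic-sum factor on the diagonal range. The tentative alternative $v_k = k^{-1/2}\ln(k+1)$ is unnecessary, as you conclude yourself. Keeping the extra $1/\ln(k+1)$ in the first range of condition~1 is a slightly sharper bound but is not needed.
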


\begin{proof}
Apply Schur's test with the weights $u_n = v_n = n^{-1/2}$. Divide the sum $\sum\limits_{n = 1}^{\infty} T_{k,n} u_n$ into three parts (with suitable rounding):
$$
\sum_{n = 1}^{\infty} = \sum_{n = 1}^{k/2} + \sum_{n = k/2}^{2k} + \sum_{n = 2k}^{\infty}
$$
and estimate them separately:
\begin{align*}
& \sum_{n = 1}^{k/2} \frac{1}{\ln(n + 1) \bigl( |n-k| + 1\bigr) \sqrt n} \le \frac{C}{k} \sum_{n = 1}^{k/2} \frac{1}{\sqrt{n}} \le \frac{C}{\sqrt k}, \\
& \sum_{n = k/2}^{2k} \frac{1}{\ln(n + 1) \bigl( |n-k| + 1\bigr) \sqrt n} \le \frac{C}{\ln(k+1) \sqrt k} \sum_{n = k/2}^{2k} \frac{1}{|n-k| + 1} \le \frac{C}{\sqrt k}, \\
& \sum_{n = 2k}^{\infty} \frac{1}{\ln(n + 1) \bigl( |n-k| + 1\bigr) \sqrt n} \le C \sum_{n = 2k}^{\infty} \frac{1}{n^{3/2}} \le \frac{C}{\sqrt k}.
\end{align*}
Analogously, we estimate the following sum:
\begin{align*}
& \sum_{k = 1}^{\infty} T_{k,n} v_k = \frac{1}{\ln(n+1)} \sum_{k = 1}^{\infty} \frac{1}{\bigl( |n-k| + 1\bigr)\sqrt k}, \\
& \sum_{k = 1}^{\infty} = \sum_{k = 1}^{n/2} + \sum_{k = n/2}^{2n} + \sum_{k = 2 n}^{\infty}, \\
& \sum_{k = 1}^{n/2} \frac{1}{\bigl( |n-k| + 1\bigr)\sqrt k} \le \frac{C}{n} \sum_{k = 1}^{n/2} \frac{1}{\sqrt k} \le \frac{C}{\sqrt n}, \\
& \sum_{k = n/2}^{2n} \frac{1}{\bigl( |n-k| + 1\bigr)\sqrt k} \le \frac{C}{\sqrt n} \sum_{k = n/2}^{2n} \frac{1}{|n-k| + 1} \le C \frac{\ln (n + 1)}{\sqrt n}, \\
& \sum_{k = 2n}^{\infty} \frac{1}{\bigl( |n-k| + 1\bigr)\sqrt k} \le C \sum_{k = 2n}^{\infty} \frac{1}{k^{3/2}} \le \frac{C}{\sqrt n}. 
\end{align*}
Thus, the conditions~1 and 2 of Proposition~\ref{prop:schur} are satisfied, so the operator $T$ is bounded.
\end{proof}

\begin{remark} \label{rem:ln}
The operator $T = [T_{k,n}]_{k,n \ge 1}$, $T_{k,n} = \dfrac{1}{|n-k|+1}$, is unbounded in $l_2$, which is shown by the following counterexample:
$$
a_n = \begin{cases}
            \sqrt n, & n \le N, \\
            0, & n > N.
      \end{cases}
$$
Indeed, one can easily check that $\dfrac{\| T a \|_{l_2}}{\| a \|_{l_2}} \to \infty$ as $N \to \infty$. 
Therefore, we have included an additional logarithm into \eqref{defTheta}.
Otherwise, the proof technique of Theorems~\ref{thm:scsp} and~\ref{thm:unisp} in Section~\ref{sec:sp} will not work.
\end{remark}

\medskip

{\bf Funding.} This work was supported by Grant 24-71-10003 of the Russian Science Foundation, https://rscf.ru/en/project/24-71-10003/.

\noindent Natalia Pavlovna Bondarenko \\

\noindent 1. Department of Mechanics and Mathematics, Saratov State University, 
Astrakhanskaya 83, Saratov 410012, Russia, \\

\noindent 2. Department of Applied Mathematics, Samara National Research University, 
Moskovskoye Shosse 34, Samara 443086, Russia, \\

\noindent 3. S.M. Nikolskii Mathematical Institute, RUDN University, 6 Miklukho-Maklaya St, Moscow, 117198, Russia, \\

\noindent e-mail: {\it bondarenkonp@sgu.ru}

\end{document}